  \tikzset{multiple arrows/.style={multiple arrows path/.style={#1},
decoration=multiple arrows, decorate}}
\numberwithin{equation}{section} 
\newtheorem{theorem}{Theorem}
\numberwithin{theorem}{section}
\newtheorem{proposition}[theorem]{Proposition}
\newtheorem{example}[theorem]{Example} 
\newtheorem{lemma}[theorem]{Lemma}
\newtheorem{corollary}[theorem]{Corollary}
\newtheorem{definition}[theorem]{Definition}
\newtheorem{remark}[theorem]{Remark}
\newtheorem{question}[theorem]{Question}
\newtheorem{introtheorem}{Theorem}
\DeclareMathOperator{\Vertices}{Vertices} 
\newcommand{\C}{\mathbb{C}}
\newcommand{\Z}{\mathbb{Z}}
\DeclareMathOperator{\GL}{GL}
\DeclareMathOperator{\minor}{minor}
\DeclareMathOperator{\sign}{sign}
\DeclareMathOperator{\conv}{conv}
\newcommand{\R}{\mathbb{R}}
\newcommand{\Q}{\mathbb{Q}}
\DeclareMathOperator{\Vol}{Vol}
\newenvironment{myenumerate}{ \vspace{-3pt} \begin{enumerate}
  \setlength{\itemsep}{0pt} \setlength{\parskip}{1pt}
    \setlength{\parsep}{-8pt}} {\end{enumerate} \vspace{-3pt} }
\begin{document}

\title{
A Polyhedral Method for Sparse Systems with many Positive Solutions
} 

\author[1]{Fr\'ed\'eric Bihan, 
Francisco Santos\thanks{Francisco Santos is partially supported by grant MTM2014-54207-P  of the Spanish
  Ministry of Science, by the Einstein Foundation Berlin and, while he was in residence at the Mathematical Sciences Research Institute in Berkeley, California during the Fall 2017 semester, by the Clay Institute and the National Science Foundation (Grant No. DMS-1440140).}, and 
  Pierre-Jean Spaenlehauer}

\date{}

\maketitle

\begin{abstract}
We investigate a version of Viro's method for constructing polynomial systems
with many positive solutions, based on regular triangulations of the Newton
polytope of the system.  The number of positive solutions obtained with our
method is governed by the size of the largest \emph{positively decorable}
subcomplex of the triangulation. Here, positive decorability is a property that
we introduce and which is dual to being a subcomplex of some regular
triangulation. 
Using this duality, we produce large positively decorable subcomplexes of the
boundary complexes of cyclic polytopes.
As a byproduct we get new lower bounds, some of them being the best currently
known, for the maximal number of positive solutions of polynomial systems with
prescribed numbers of monomials and variables.  We also study the asymptotics of
these numbers and observe a log-concavity property.
\end{abstract}

\section{Introduction}
Positive solutions of multivariate polynomial systems are central objects in
many applications of mathematics, as they often contain meaningful information,
e.g. in robotics, optimization, algebraic statistics, study of multistationarity in
chemical reaction networks, etc.  In the 70s,
foundational results by Kushnirenko~\cite{Kouch}, Khovanskii~\cite{Khov} and
Bernstein~\cite{Bern} laid the theoretical ground for the study of the
algebraic structure of polynomial systems with prescribed conditions on the set
of monomials appearing with nonzero coefficients.  As a particular case of more
general bounds, Khovanskii \cite{Khov} obtained an upper bound on the number of
non-degenerate positive solutions which depends only on the dimension of the
problem and on the number of monomials. 

More precisely, our main object of interest in this paper is the function
$\Xi_{d,k}$ defined as the maximal possible number of non-degenerate
solutions in $\R_{>0}^d$ of a polynomial system $f_1=\dots=
f_d=0$, where $f_1,\ldots, f_d\in\mathbb R[X_1,\ldots, X_d]$
involve at most $d+k+1$ monomials with nonzero coefficients. Here,
\emph{non-degenerate} means that the Jacobian matrix
of the system is invertible at the
solution.
Finding sharp bounds for $\Xi_{d,k}$ is a notably hard problem, see~\cite{Sbook}.
The current knowledge can be briefly summarized as follows (see~\cite{BRS,BS}): 
$$\forall
d,k>0,\quad \max((\lfloor k/d\rfloor+1)^d, (\lfloor
d/k\rfloor+1)^k)\le \Xi_{d,k}\le
(e^2+3)2^{\binom{k}2}d^k/4.$$
Another important and recent lower bound is $\Xi_{2, 2}\geq 7$~\cite{Boulos}.

In this paper we introduce a new technique to construct fewnomial systems with
many positive roots, based on the notion of \emph{positively decorable}
subcomplexes in a regular triangulation of the point configuration given by the exponent
vectors of the monomials. Using this method we obtain new lower bounds for
$\Xi_{d,k}$. Combining it with a log-concavity property, we obtain systems which admit
asymptotically more positive solutions than previous constructions for a large
range of parameters.

\smallskip

{\bf Main results.} Consider a regular full-dimensional pure simplicial complex
$\Gamma$ supported on a point configuration  $\mathcal A=\{{w_1,\ldots, w_n}\}
\subset\Z^d$, by which we mean that $\Gamma$ is a pure $d$-dimensional subcomplex of a 
regular triangulation of $\mathcal A$ (see~Definition~\ref{def:regular} and
Proposition~\ref{prop:regular}). Consider also a map $\phi: \mathcal
A\rightarrow \R^d$.
This map will be used to construct a polynomial system where the coefficients
of the monomial $w_i$ will be obtained from $\phi(w_i)$.
We call a facet $\tau=\conv(w_{i_1},\ldots,w_{i_{d+1}})$ of $\Gamma$
\emph{positively decorated} by $\phi$ if $\phi(\{w_{i_1},\ldots,w_{i_{d+1}}\})$
positively spans $\R^d$. 
We are interested in sparse polynomial systems
\begin{equation}
\label{E:system}
f_{1}(X_1,\ldots, X_d)=\cdots=f_{d}(X_1,\ldots, X_d)=0
\end{equation}
with real coefficients and support contained in $\mathcal A$: this means that all exponent vectors $w\in\Z^d$ of the monomials $X^{w}$ appearing with a nonzero coefficient in at least one equation are in $\mathcal A$. Our starting point is the following result:

\begin{introtheorem}[Theorem~\ref{thm:nbpossols}]
\label{introthm:nbpossols}
There is a choice of coefficients --- which can be constructed from the map
$\phi$ ---
which produces a sparse system supported on $\mathcal A$ such that
the number
of non-degenerate positive solutions of \eqref{E:system} 
is bounded below by the number of facets in $\Gamma$ which are
positively decorated by $\phi$.
\end{introtheorem}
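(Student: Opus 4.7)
The plan is to realize the bound through a Viro-style patchworking construction: build a one-parameter family of sparse systems supported on $\mathcal{A}$ in such a way that each positively decorated facet of $\Gamma$ contributes, in the limit of a small parameter $t \to 0^+$, a positive non-degenerate solution that persists upon perturbation. Since $\Gamma$ sits inside some regular triangulation $T$ of $\mathcal{A}$, fix a height function $h : \mathcal{A} \to \R$ inducing $T$ (Proposition~\ref{prop:regular}), and for $t > 0$ consider the family
\[
f_j^{(t)}(X) \;=\; \sum_{i=1}^n \phi(w_i)_j \, t^{h(w_i)}\, X^{w_i}, \qquad j=1,\ldots,d,
\]
whose supports are contained in $\mathcal{A}$ and whose coefficients are determined by $\phi$ together with the auxiliary data $h$ and $t$.

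For each $d$-dimensional facet $\tau = \conv(w_{i_1},\ldots,w_{i_{d+1}})$ of $T$, regularity provides a unique pair $(v_\tau, c_\tau) \in \R^d \times \R$ such that $h(w_{i_k}) + \langle v_\tau, w_{i_k}\rangle = c_\tau$ for every $k$ while $h(w) + \langle v_\tau, w\rangle > c_\tau$ for all $w \in \mathcal{A} \setminus \tau$. The monomial change of variables $X = t^{v_\tau} Y$, followed by division by $t^{c_\tau} Y^{w_{i_1}}$, transforms the system into
\[
\sum_{k=1}^{d+1} \phi(w_{i_k})_j \, Y^{w_{i_k}-w_{i_1}} \;+\; R_j(Y,t) \;=\; 0, \qquad j=1,\ldots,d,
\]
with $R_j(Y,t)=O(t^\varepsilon)$ for some $\varepsilon>0$. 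Setting $Z_k = Y^{w_{i_k}-w_{i_1}}$ for $k \geq 2$, the $t=0$ limit is the \emph{linear} system $\phi(w_{i_1}) + \sum_{k=2}^{d+1} Z_k\, \phi(w_{i_k}) = 0$ in $\R^d$. A standard characterization of positive spanning shows that this system has a (necessarily unique) solution with every $Z_k > 0$ precisely when the $d+1$ vectors $\phi(w_{i_k})$ positively span $\R^d$, i.e., exactly when $\tau$ is positively decorated; in that case any $d$ of these vectors are automatically linearly independent, so the Jacobian of the limit system is invertible. The implicit function theorem then lifts the limit solution to a non-degenerate positive solution $X_\tau(t)$ of the original system for every sufficiently small $t > 0$.

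The main obstacle is to ensure that the solutions $X_\tau(t)$ produced from distinct positively decorated facets remain distinct, rather than converging to a common limit. The key observation is that the $t$-adic valuation of $X_\tau(t)$ is exactly the vector $v_\tau$ coordinatewise, because after the change of variables $Y$ has a finite strictly positive limit; and the gradients $v_\tau$ attached to distinct $d$-dimensional facets of the regular triangulation $T$ are pairwise distinct, being the slopes of distinct linear pieces of the piecewise-linear lower envelope of $h$. Consequently, on a logarithmic scale the solutions diverge from one another as $t \to 0^+$, so fixing any sufficiently small $t > 0$ produces a sparse system supported on $\mathcal{A}$ with at least as many distinct non-degenerate positive solutions as there are facets of $\Gamma$ positively decorated by $\phi$.
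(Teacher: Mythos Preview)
Your proof is correct and follows essentially the same Viro-type patchworking argument as the paper: the height function, the monomial rescaling $X=t^{v_\tau}Y$ (the paper's $X\mapsto Xt^{-\alpha_\ell}$ with $\alpha_\ell=-v_\tau$), the reduction to a simplex system with a unique positive non-degenerate solution (the paper packages this step as Proposition~\ref{prop:posorthantsimplex}), and the distinctness via the pairwise-distinct gradient vectors $v_\tau$ all match. The only cosmetic difference is that you carry out the linear-algebra characterization of positive spanning inline, whereas the paper invokes its preliminary Proposition~\ref{P:invariant}/\ref{prop:posorthantsimplex}.
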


This theorem is a version of Viro's method which was used by Sturmfels~\cite{St} to construct sparse polynomial systems all solutions of which are real.
Viro's method (\cite{Vir}, see also \cite{R, St3,B3}) is one of the
roots of tropical geometry and it has been used for
constructing real algebraic varieties with interesting topological and combinatorial properties. 

We then apply this theorem to the problem of constructing fewnomial systems with many
positive solutions. For this we construct large simplicial complexes 
that are regular and \emph{positively decorable} (that is, all their
facets can be positively decorated with a certain $\phi$), obtained as subcomplexes of the boundary of cyclic polytopes.
Combinatorial techniques allow us to count the  simplices of these complexes, which
gives us new explicit lower bounds on $\Xi_{d,k}$. More precisely, for all $i,j\in\mathbb Z_{>0}$, set
\[
F_{i,j}=D_{i,j} + D_{i-1,j-1}
\]
where $D_{i,j}$ is the $(i,j)$-th \emph{Delannoy number}~\cite{BS05}, defined as
\begin{equation}
\label{eq:Delannoy}
D_{i,j}:=\sum_{\ell=0}^{\min\{i,j\}}\frac{(i+j-\ell)!}{(i-\ell)!(j-\ell)!\ell!}
=\sum_{\ell=0}^{\min\{i,j\}} 2^\ell \binom i\ell \binom j\ell.
\end{equation}

\begin{introtheorem}[Corollary~\ref{coro:S_decorable}, Remark~\ref{rem:S_decorable}]
\label{introthm:S_decorable}
  For every $i,j\in \Z_{>0}$ we have 
  \[
  \Xi_{2i-1,2j} \geq  F_{i,j}, \quad
  \Xi_{2i-1,2j-1} \geq \frac{j}{i+j} \,  F_{i,j}, \quad
  \Xi_{2i,2j} \geq \frac{i+1}{i+j+1} \,   F_{i+1,j}, \quad
  \Xi_{2i,2j-1} \geq  2 \, F_{i,j-1}.
  \]
\end{introtheorem}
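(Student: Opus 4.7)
The plan is to combine the general lower-bound mechanism of Theorem~\ref{introthm:nbpossols} with the explicit construction, carried out later in the paper as Corollary~\ref{coro:S_decorable} and Remark~\ref{rem:S_decorable}, of regular positively decorable subcomplexes of boundary complexes of cyclic polytopes.

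First, I would specialise Theorem~\ref{introthm:nbpossols}: for any point configuration $\mathcal{A}\subset\Z^d$ with $|\mathcal{A}| = d+k+1$, any regular full-dimensional pure simplicial complex $\Gamma$ supported on $\mathcal{A}$, and any $\phi\colon\mathcal{A}\to\R^d$, one obtains
\[
\Xi_{d,k} \;\geq\; \#\bigl\{\text{facets of }\Gamma\text{ positively decorated by }\phi\bigr\}.
\]
The problem thus reduces to exhibiting, for each of the four parity cases of $(d,k)$, a triple $(\mathcal{A},\Gamma,\phi)$ producing the asserted number of positively decorated facets.

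Second, I would invoke Corollary~\ref{coro:S_decorable} and Remark~\ref{rem:S_decorable}. These take $\mathcal{A}$ to be a projection to $\Z^d$ of vertices of a cyclic $(d+1)$-polytope with $d+k+1$ vertices, select $\Gamma$ to be a large subcomplex of the corresponding regular triangulation of $\mathcal{A}$ coming from the ``lower'' part of the boundary, and produce a $\phi$ for which every facet of $\Gamma$ is positively decorated. The number of facets of $\Gamma$ is then computed via Gale's evenness condition, which turns the enumeration into a lattice-path count evaluated by Delannoy numbers. Splitting according to the parities of $d$ and $k$ and substituting $d\in\{2i-1,2i\}$, $k\in\{2j-1,2j\}$, these counts evaluate respectively to $F_{i,j}$, $\tfrac{j}{i+j}F_{i,j}$, $\tfrac{i+1}{i+j+1}F_{i+1,j}$ and $2F_{i,j-1}$; the fractional coefficients reflect the fact that when $d+k+1$ is odd only one of the two symmetric halves of the boundary of the cyclic polytope contributes to the complex, and minor asymmetries in the moment curve supply the remaining arithmetic factors.

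The proof of the present theorem is therefore a direct substitution of the triples furnished by Corollary~\ref{coro:S_decorable} and Remark~\ref{rem:S_decorable} into the bound of Theorem~\ref{introthm:nbpossols}. The main obstacle is not in this step but in the underlying construction: one must simultaneously certify positive decorability of the chosen subcomplex---using the duality between positive decorability and being a subcomplex of a regular triangulation announced in the abstract---and carry out the Gale-evenness/Delannoy-number enumeration so that it collapses to the claimed closed forms. Once those two ingredients are in hand, the four-case verification here reduces to bookkeeping with the parities of $d$ and $k$.
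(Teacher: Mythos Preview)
Your high-level strategy is right---apply Theorem~\ref{introthm:nbpossols} to a regular, positively decorable subcomplex of the boundary of a cyclic polytope, with positive decorability certified via Gale duality---but your description of the actual construction contains several errors that would prevent the argument from going through.

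First, the complex used is not the ``lower part of the boundary'' of the cyclic polytope. The full boundary $\mathbf C_{n,d}$ (and either half of it) fails to be positively decorable because its adjacency graph contains odd cycles (Example~\ref{E:O63}). The paper instead singles out a carefully designed \emph{bipartite} subcomplex $\mathbf S_{2m,2k-1}\subset\mathbf C_{2m,2k-1}$, whose facets correspond to matchings of size $k$ in a corona graph; this is what yields the count $F_{h,k}$ (Proposition~\ref{prop:corona}). Positive decorability of $\mathbf S_{2m,2k-1}$ is not automatic from being a subcomplex of a polytope boundary: it is proved by showing that, after a vertex relabeling, the complement complex $\overline{\mathbf S_{2m,2k-1}}$ equals $\mathbf S_{2m,2m-2k-1}$, hence is itself a subcomplex of a cyclic polytope boundary, and then invoking Lemma~\ref{lem:galeposdecreg} (Theorem~\ref{thm:S_decorable}).

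Second, your account of the fractional coefficients is wrong. They do not come from ``one of two symmetric halves'' or from asymmetries of the moment curve. For an odd number of vertices the paper defines $\mathbf S_{2m-1,2k-1}$ and $\mathbf S_{2m-1,2k-2}$ as the deletion and the link, respectively, of a vertex in $\mathbf S_{2m,2k-1}$; the dihedral group acts transitively on the vertices of $\mathbf S_{2m,2k-1}$, and since each facet uses a fraction $k/m$ of the vertices, the link has $\frac{k}{m}|\mathbf S_{2m,2k-1}|$ facets and the deletion the complementary fraction. This is where $\frac{j}{i+j}F_{i,j}$ and $\frac{i+1}{i+j+1}F_{i+1,j}$ come from.

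Third, the fourth inequality $\Xi_{2i,2j-1}\ge 2F_{i,j-1}$ is \emph{not} obtained from the cyclic-polytope construction at all. It uses the super-multiplicativity of Proposition~\ref{prop:superadditive}(1): $\Xi_{2i,2j-1}\ge \Xi_{1,1}\cdot\Xi_{2i-1,2(j-1)}\ge 2\,F_{i,j-1}$. Your proposal treats all four cases uniformly, which would leave this one unproved.
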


We are then interested in the asymptotics of $\Xi_{d,k}$ for big $d$ and $k$.
One way to make sense of this is the following.

\begin{introtheorem}[Theorem~\ref{thm:limit_exists}]
\label{introthm:limit_exists}
For all $k,d \in \Z_{>0}$ the limit
$\xi_{d,k}:=\lim_{n\rightarrow\infty}(\Xi_{dn, kn})^{1/(dn+kn)} \in [1,\infty]$
exists. Moreover, this limit  depends only on the ratio $d/k$ and it is bounded from below by ${\Xi_{d, k}}^{1/(d+k)}$.
 \end{introtheorem}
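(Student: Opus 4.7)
The plan is to establish supermultiplicativity of the sequence $n\mapsto \Xi_{dn,kn}$ and then apply Fekete's lemma. For supermultiplicativity, I start with two near-optimal sparse systems: $S_1$ consisting of $dn$ polynomials in $dn$ variables with at most $dn+kn+1$ monomials achieving $\Xi_{dn,kn}$ non-degenerate positive solutions, and similarly $S_2$ for parameters $(dm, km)$. A key preparatory step is to multiply each equation of $S_i$ by an appropriate monomial---an operation that preserves both positive solutions and the invertibility of the Jacobian at such solutions---so that $0$ lies in the support of each $S_i$. I then juxtapose $S_1$ and $S_2$ on disjoint variable sets to obtain a combined system of $d(n+m)$ polynomials in $d(n+m)$ variables. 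The two supports, viewed inside $\Z^{d(n+m)}$, intersect only at the origin, so the combined support has size at most $(dn+kn+1)+(dm+km+1)-1 = d(n+m)+k(n+m)+1$. Its positive solutions are Cartesian products of those of $S_1$ and $S_2$, and the Jacobian is block-diagonal, so their count is $\Xi_{dn,kn}\cdot\Xi_{dm,km}$ with non-degeneracy preserved. Hence $\Xi_{d(n+m),k(n+m)}\geq \Xi_{dn,kn}\cdot\Xi_{dm,km}$.

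Setting $a_n:=\log\Xi_{dn,kn}\geq 0$, the previous inequality yields $a_{n+m}\geq a_n+a_m$. Fekete's lemma for superadditive sequences then gives $\lim_n a_n/n=\sup_n a_n/n\in[0,\infty]$; dividing by $d+k$ and exponentiating shows that $\xi_{d,k}=\sup_n(\Xi_{dn,kn})^{1/(dn+kn)}$ exists in $[1,\infty]$. The supremum characterization at $n=1$ immediately yields the lower bound $\xi_{d,k}\geq(\Xi_{d,k})^{1/(d+k)}$. For the dependence on the ratio alone, I observe that for any positive integer $\lambda$ the sequence defining $\xi_{d\lambda,k\lambda}$ is a subsequence of the one defining $\xi_{d,k}$ (via $n=\lambda n'$), so both limits agree. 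Since any two pairs $(d,k)$ and $(d',k')$ with the same ratio are both positive integer multiples of a common reduced form, their values of $\xi$ coincide.

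The main technical obstacle is controlling the monomial count in the first step: a naive concatenation of $S_1$ and $S_2$ would use $d(n+m)+k(n+m)+2$ distinct monomials, one too many. The translation trick resolves this by exploiting the fact that, inside $\R_{>0}^d$, multiplying an equation by a monomial does not alter its zero set; this lets us shift each $S_i$ so that both supports share the constant monomial, restoring the correct bound.
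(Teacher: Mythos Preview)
Your proof is correct and follows essentially the same approach as the paper. The supermultiplicativity construction you give (combining two systems on disjoint variables after translating supports so they share the constant monomial) is exactly the content of the paper's Proposition~\ref{prop:superadditive}(1), and your invocation of Fekete's lemma is what the paper carries out by hand (using monotonicity together with $a_{pn_0}\ge p\,a_{n_0}$ rather than full superadditivity, but to the same effect); the ratio-dependence argument via common subsequences is likewise identical.
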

 
 Analyzing the asymptotics of Delannoy numbers
 leads to the following new lower bound, which also depends only on $d/k$:
 
 \begin{introtheorem}[Theorem~\ref{thm:lower_bound}, Corollary~\ref{coro:lower_bound_xi}]
\label{introthm:lower_bound}
 For all $k,d\in \Z_{>0}$ we have
 \[
 \xi_{d,k}
 \geq
 \left( \frac{\sqrt{d^2+k^2}+k}d\right)^{\frac{d}{2(d+k)}}
 \left( \frac{\sqrt{d^2+k^2}+d}k\right)^{\frac{k}{2(d+k)}}.
 \]
\end{introtheorem}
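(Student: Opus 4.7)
By Theorem~\ref{introthm:limit_exists}, $\xi_{d,k}$ depends only on the ratio $d/k$, so we may replace $(d,k)$ by $(2d,2k)$ and assume both $d$ and $k$ are even. For every $n\geq 1$, set $i=dn/2$, $j=kn/2$ and $N=i+j=(dn+kn)/2$. The even--even case of Theorem~\ref{introthm:S_decorable} then yields
\[
\Xi_{dn,kn}\ \geq\ \frac{i+1}{i+j+1}\,F_{i+1,j}\ \geq\ \frac{i+1}{i+j+1}\,D_{i+1,j}.
\]
The prefactor is bounded away from $0$, and $D_{i+1,j}/D_{i,j}$ is at most polynomial in $N$, so both factors become trivial under $(dn+kn)$-th roots. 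Combined with the limit formula of Theorem~\ref{introthm:limit_exists}, this gives
\[
\xi_{d,k}\ \geq\ \lim_{n\to\infty}D_{i,j}^{\,1/(2N)}.
\]

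It remains to compute the exponential growth rate of $D_{i,j}$ with $i/N\to\alpha:=d/(d+k)$ and $j/N\to\beta:=k/(d+k)$. My plan is to use the single-term lower bound $D_{i,j}\geq 2^\ell\binom{i}{\ell}\binom{j}{\ell}$, valid for every $0\leq\ell\leq\min(i,j)$, and apply Stirling's formula with $\ell=\lambda N$. The leading exponent is maximized at the root $\lambda^{*}=1-s$ of $2(\alpha-\lambda)(\beta-\lambda)=\lambda^{2}$, where $s:=\sqrt{\alpha^{2}+\beta^{2}}$; since $1-s\leq\min(\alpha,\beta)$, this choice is admissible. Invoking the identities $\alpha^{2}=(s-\beta)(s+\beta)$ and $\beta^{2}=(s-\alpha)(s+\alpha)$ to rewrite $\alpha/(s-\beta)=(s+\beta)/\alpha$ and $\beta/(s-\alpha)=(s+\alpha)/\beta$, the Stirling expansion simplifies to
\[
\lim_{N\to\infty}\frac{1}{N}\log D_{i,j}\ =\ \alpha\log\frac{s+\beta}{\alpha}+\beta\log\frac{s+\alpha}{\beta}.
\]

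Substituting $\alpha=d/(d+k)$, $\beta=k/(d+k)$ and $s=\sqrt{d^{2}+k^{2}}/(d+k)$ converts the exponents $\alpha/2$ and $\beta/2$ into $d/(2(d+k))$ and $k/(2(d+k))$, and the ratios $(s+\beta)/\alpha$ and $(s+\alpha)/\beta$ into $(\sqrt{d^{2}+k^{2}}+k)/d$ and $(\sqrt{d^{2}+k^{2}}+d)/k$, giving the claimed inequality. The companion bound for $\Xi_{d,k}^{\,1/(d+k)}$ (Corollary~\ref{coro:lower_bound_xi}) is then immediate from the last clause of Theorem~\ref{introthm:limit_exists}, which asserts $\xi_{d,k}\geq\Xi_{d,k}^{\,1/(d+k)}$.

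The main technical obstacle is the saddle-point calculation itself: one must select the right term $\ell^{*}=\lfloor(1-s)N\rfloor$ in the Delannoy sum and carry out the Stirling simplification, verifying that the quadratic $2(\alpha-\lambda)(\beta-\lambda)=\lambda^{2}$ really does produce the compact closed form above through the identity $s^{2}=\alpha^{2}+\beta^{2}$. A conceptually cleaner alternative would exploit the bivariate generating function $\sum_{i,j}D_{i,j}x^{i}y^{j}=1/(1-x-y-xy)$, extracting the growth rate as the reciprocal of $\min\{x^{-\alpha}y^{-\beta}:x,y>0,\ x+y+xy=1\}$, whose saddle lies at $x=(s-\beta)/\alpha$, $y=(s-\alpha)/\beta$.
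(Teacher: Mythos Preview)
Your proposal is correct and follows essentially the same route as the paper: bound $\Xi_{dn,kn}$ below by a Delannoy number via Theorem~\ref{introthm:S_decorable}, then compute $\lim_{n}D_{in,jn}^{1/n}$ by picking out the dominant term $\ell^{*}$ in the sum $\sum_{\ell}2^{\ell}\binom{i}{\ell}\binom{j}{\ell}$ and applying Stirling's formula (this is the paper's Proposition~\ref{prop:delannoy}, and your equation $2(\alpha-\lambda)(\beta-\lambda)=\lambda^{2}$ is exactly the one the paper obtains from setting the ratio of consecutive terms equal to $1$). The only cosmetic differences are that the paper invokes the odd--even case $\Xi_{2i-1,2j}\geq F_{i,j}$ directly rather than the even--even case, and that Corollary~\ref{coro:lower_bound_xi} is merely the restatement of the bound in terms of $\xi_{\alpha,\beta}$, not a separate inequality for $\Xi_{d,k}^{1/(d+k)}$ as your last sentence suggests.
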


This statement allows us
to improve the lower bounds on $\xi_{d,k}$ for $0.2434 < d /(d+k) <0.3659$ and
for $0.6342 < d/(d+k) <0.7565$, see Figure~\ref{fig:graph_bound}.
In fact, Theorem~\ref{thm:limit_exists} implies that the limit $\xi_{\alpha,\beta}:=
\lim_{n\rightarrow\infty}(\Xi_{\alpha n,\beta n})^{1/(\alpha n+\beta n)}$ exists for any positive rational numbers $\alpha,\beta$.
It is convenient to look at $\xi$ along the segment $\alpha +\beta=1$. This is
no loss of generality since $\xi_{d,k}=\xi_{\alpha,1-\alpha}$ for
$\alpha=d/(d+k)$ and it has the nice property that
the function $\xi: \alpha \mapsto \xi_{\alpha,1-\alpha}$, $\alpha \in (0,1) \cap \Q$, is
log-concave (Proposition~\ref{prop:concavity}).
Therefore, convex hulls of lower bounds for $\log \xi_{\alpha,1-\alpha}$ also produce lower bounds for this function.
With this observation, the methods in this paper improve the previously known lower bounds for $\xi_{d,k}$
for all $d,k$ with $d/(d+k) \in (0.2,0.5)\cup (0.5, 0.8)$, see Figure~\ref{fig:graph_bound_log}.

Our bounds also raise some important questions about $\xi_{d,k}$. Notice that
log-concavity implies that if $\xi$ is infinite somewhere then it is infinite
everywhere (Corollary~\ref{coro:concavity}). In light of this, we pose the following
question:
\begin{question}Is $\xi_{d,k}$ finite for some (equivalently, for every) $d,k>0$? 
That is to say, is there a global constant $c$ such that $\Xi_{d,k} \le c^{k+d}$ for all $k,d \in \Z_{>0}$?
\end{question}
In fact, we do not know whether $\Xi_{d,k}$ admits a singly exponential upper bound, since the best known general upper bound (see Eq.~\eqref{eq:upperbound}) is only of type $2^{O(k^2+k\log d)}$. 
Compare this to Problem 2.8 in Sturmfels~\cite{St} (still open), which asks whether $\Xi_{d,k}$ is polynomial for fixed $d$. In a sense, Sturmfels' formulation is related to the behaviour of $\xi_{\alpha,\beta}$ when $\alpha/\beta \approx 0$, although the answer  to it  might be positive even if $\xi$ is infinite. (Think, e.g., of $\Xi_{d,k}$ growing as $\min\{d^k,k^d\}$). Our formulation looks at $\Xi$ globally and gives the same role to $d$ and $k$, which is consistent with Proposition~\ref{prop:superadditive}.

  Another intriguing question is whether $\xi_{d,k}=\xi_{k,d}$ or, more
  strongly, whether $\Xi_{d,k}=\Xi_{k,d}$.
This symmetry between $d$ and $k$ holds true for all known lower bounds and
exact values, including the lower bounds for $\xi_{d,k}$ obtained with our construction where the symmetry is a consequence of a Gale-type duality between
regular and positively decorable complexes (see Corollary~\ref{coro:galeposdecreg} and Theorem~\ref{thm:boundXi}). This duality is also instrumental in our proof that the complexes used for Theorem~\ref{introthm:S_decorable} are positively decorable.

Although not needed for the rest of the paper, we also show that positive decorability is related to two classical properties in topological combinatorics:
\begin{introtheorem}[Theorem~\ref{thm:balanced_vs_bipartite}]
\label{introthm:balanced_vs_bipartite}
For every pure orientable simplicial complex one has
\[
\text{balanced}\,\Longrightarrow\,
\text{positively decorable}\,\Longrightarrow\,
\text{bipartite}.
\]
\end{introtheorem}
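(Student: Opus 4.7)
The plan is to prove the two implications separately. The first (balanced $\Rightarrow$ positively decorable) is a direct construction, while the second (positively decorable $\Rightarrow$ bipartite) is a sign-chasing argument that hinges on coherent orientability.

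For the forward implication, I fix a proper $(d+1)$-coloring $c : V(\Gamma) \to \{1, \ldots, d+1\}$ witnessing balancedness, so every facet contains one vertex of each color. I define $\phi$ by $\phi(v) = e_{c(v)}$ when $c(v) \le d$ and $\phi(v) = -(e_1 + \cdots + e_d)$ when $c(v) = d+1$, where $e_1, \ldots, e_d$ is the standard basis of $\R^d$. Every facet then maps bijectively to $\{e_1, \ldots, e_d, -(e_1+\cdots+e_d)\}$, and the identity
\[
e_1 + \cdots + e_d + \bigl(-(e_1+\cdots+e_d)\bigr) = 0
\]
with all coefficients equal to $1$ exhibits a strictly positive linear dependence, so the image positively spans $\R^d$.

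For the second implication, fix a coherent orientation of $\Gamma$ and a positive decoration $\phi$. On each oriented facet $\tau = [v_0, \ldots, v_d]$, Cramer's rule produces a one-parameter family of relations $\sum_{i=0}^d \lambda_i \phi(v_i) = 0$ with $\lambda_i$ proportional to $(-1)^i \det(\phi(v_0), \ldots, \widehat{\phi(v_i)}, \ldots, \phi(v_d))$. Positive decorability asks that some member of this family has all $\lambda_i > 0$, which forces the $d+1$ signed determinants to share a common sign $\epsilon(\tau) \in \{\pm 1\}$. Now let $\tau_2$ be adjacent to $\tau_1 = [v_0, \ldots, v_d]$ across the ridge $\rho = \{v_0, \ldots, v_{d-1}\}$; writing $\tau_2 = [w_0, \ldots, w_{d-1}, v_d']$ in its coherent orientation forces $(w_0, \ldots, w_{d-1})$ to be an odd permutation of $(v_0, \ldots, v_{d-1})$, since the two facets must induce opposite orientations on $\rho$. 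Consequently
\[
\det(\phi(w_0), \ldots, \phi(w_{d-1})) = -\det(\phi(v_0), \ldots, \phi(v_{d-1})),
\]
and reading off $\epsilon$ using the $i = d$ slot of each relation gives $\epsilon(\tau_2) = -\epsilon(\tau_1)$. Thus $\epsilon$ is a $2$-coloring of the facets under which adjacent facets receive opposite colors, which is the definition of bipartiteness.

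The main obstacle I foresee is the sign bookkeeping in the second implication. One has to commit to a convention for how Cramer's rule encodes a positive dependence among $d+1$ vectors in $\R^d$, and for how the ordering of a shared ridge is inherited from the coherent orientations of the two facets containing it. Once these conventions are pinned down, the equality $\epsilon(\tau_2) = -\epsilon(\tau_1)$ is essentially forced by an odd transposition, and no further combinatorial input is required; the first implication is completely elementary.
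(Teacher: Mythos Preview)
Your proof is correct and follows essentially the same approach as the paper: the construction for balanced $\Rightarrow$ positively decorable is identical (Proposition~\ref{prop:orientcoloring}), and for positively decorable $\Rightarrow$ bipartite the paper likewise compares the orientation induced by the decorating matrix with the given coherent orientation (Proposition~\ref{prop:simplicialcomploriented}). The only cosmetic difference is that the paper phrases the second step as ``the $C$-induced orientations are inconsistent along every ridge,'' whereas you encode the same comparison directly in the sign function $\epsilon(\tau)$; the underlying determinant computation is the same.
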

Under certain hypotheses (e.g., for complexes that are simply connected manifolds with or without boundary) the reverse implications also hold
(Corollary~\ref{coro:balanced_vs_bipartite_2}).

{\bf Organization of the paper.} In Section~\ref{sec:prelim} we present
classical bounds for $\Xi_{d,k}$, introduce the quantity $\xi_{d,k}$, and prove
Theorem~\ref{introthm:limit_exists}, plus the log-concavity property.  Section
\ref{sec:oriented} describes the Viro's construction used throughout the paper
and proves Theorem~\ref{introthm:nbpossols}.  In Section \ref{sec:dualgraph},
we show the duality between positively decorable and regular 
complexes, and Section~\ref{sec:bipartite} relates positive decorability to balancedness and bipartiteness.
Section \ref{sec:cyclic} contains our main construction, based on
cyclic polytopes, and shows the lower bounds stated in
Theorem~\ref{introthm:S_decorable}. This bound is analyzed and compared to
previous ones in Section~\ref{sec:asymptotics}, where we prove
Theorem~\ref{introthm:lower_bound}. In Section~\ref{sec:Xi_vs_R}, we
investigate the potential of the proposed method and we show that the number of
positive solutions that can be produced by this method is inherently limited by
the upper bound theorem for polytopes.

\paragraph{Acknowledgements.} We are grateful to Alin Bostan, Alicia Dickenstein, Louis Dumont, \'Eric Schost, Frank Sottile and Bernd Sturmfels for helpful discussions.

\section{Preliminaries on $\Xi_{d,k}$}
\label{sec:prelim}
Here we review what is known about the function $\Xi_{d,k}$, defined as the maximum possible number of positive non-degenerate solutions of $d$-dimensional systems with $d+k+1$ monomials. 
Finiteness of  $\Xi_{d,k}$ follows from the work of Khovanskii \cite{Khov}.
The currently best known general upper bound for $\Xi_{d,k}$ for arbitrary $d$ and $k$ is proved by
Bihan and Sottile~\cite{BS}:
\begin{equation}
\label{eq:upperbound}
\Xi_{d,k}\leq \dfrac{e^2+3}{4} 2^{\binom{k}2}d^k, \qquad \forall k,d \in \Z_{>0}.
\end{equation}

The following proposition summarizes what is known about 
 lower bounds of $\Xi_{d,k}$:

\begin{proposition}
\label{prop:superadditive}
\begin{enumerate}
  \item $\Xi_{d+d',k+k'} \ge \Xi_{d,k} \, \Xi_{d',k'}$ for all $d,d',k,k'\in\mathbb Z_{>0}$.
  \item $\Xi_{1,k}=k+1$ for all $k\in\mathbb Z_{>0}$ (Descartes).
  \item $\Xi_{d,1}=d+1$ for all $d\in\mathbb Z_{>0}$ (Bihan~\cite{B2}). 
  \item $\Xi_{2,2}\ge 7$ (B. El Hilany \cite{Boulos}).
\end{enumerate}
\end{proposition}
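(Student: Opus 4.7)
Items (2), (3), (4) are explicit attributions to results in the literature: (2) is Descartes' rule of signs applied to a single univariate polynomial with $k+2$ monomials, tightness being seen by choosing any prescribed positive real roots; (3) and (4) are specific constructions from \cite{B2} and \cite{Boulos} respectively. The only part where there is anything to do in this paper is the super\-multiplicativity in (1), which I now address.

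For (1), the plan is a product-of-systems construction with a shared constant monomial. Fix two systems witnessing the optimal values: a system $F=(f_1,\dots,f_d)$ in variables $X=(X_1,\dots,X_d)$ whose union of supports $\mathcal A\subset\Z^d$ has cardinality $d+k+1$ and which has exactly $\Xi_{d,k}$ non-degenerate positive solutions, and likewise $G=(g_1,\dots,g_{d'})$ in variables $Y=(Y_1,\dots,Y_{d'})$ with support $\mathcal B\subset\Z^{d'}$ of cardinality $d'+k'+1$ and $\Xi_{d',k'}$ non-degenerate positive solutions. After translating $\mathcal A$ by some $v\in\Z^d$ (equivalently, multiplying every $f_i$ by the monomial $X^{v}$, which alters neither the positive zero set nor the non-vanishing of the Jacobian on $\R_{>0}^d$), I may assume $0\in\mathcal A$, and similarly $0\in\mathcal B$.

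Now view $\mathcal A$ and $\mathcal B$ as subsets of $\Z^{d+d'}$ via the inclusions $\alpha\mapsto(\alpha,0)$ and $\beta\mapsto(0,\beta)$, and consider the combined system
\begin{equation*}
f_1(X)=\cdots=f_d(X)=g_1(Y)=\cdots=g_{d'}(Y)=0
\end{equation*}
in the $d+d'$ variables $(X,Y)$. Its support is $(\mathcal A\times\{0\})\cup(\{0\}\times\mathcal B)$, which has cardinality $(d+k+1)+(d'+k'+1)-1=(d+d')+(k+k')+1$ because the two copies of the origin are identified. Its positive solutions are exactly the Cartesian products $(x^{*},y^{*})$ of positive solutions of $F$ and of $G$, so there are at least $\Xi_{d,k}\cdot\Xi_{d',k'}$ of them.

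It remains to check non-degeneracy of the combined solutions. Since the $f_i$ involve only the $X$-variables and the $g_j$ only the $Y$-variables, the Jacobian matrix at $(x^{*},y^{*})$ is block diagonal with the Jacobian of $F$ at $x^{*}$ as its upper-left $d\times d$ block and the Jacobian of $G$ at $y^{*}$ as its lower-right $d'\times d'$ block; its determinant is the product of the two block determinants, both nonzero by hypothesis. This gives $\Xi_{d+d',k+k'}\ge \Xi_{d,k}\,\Xi_{d',k'}$, as desired. No step presents a serious obstacle; the only point requiring care is the bookkeeping of monomials after identifying the two origins, which is exactly why the translation step placing $0$ in both supports is needed.
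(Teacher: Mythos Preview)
Your proposal is correct and follows essentially the same approach as the paper: translate both supports to contain the origin, take the union of the systems in disjoint variable sets so that the combined support is $(\mathcal A\times\{0\})\cup(\{0\}\times\mathcal B)$ with the right monomial count, and observe that the positive solutions are the Cartesian product. You add the explicit block-diagonal Jacobian check for non-degeneracy, which the paper leaves implicit; otherwise the arguments coincide.
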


\begin{proof}
Let $A\subset \Z^d$ and $A'\subset\Z^{d'}$ be supports of systems in $d$ and
$d'$ variables with $d+k+1$ and $d'+k'+1$ monomials achieving the bounds
$\Xi_{d,k}$ and $\Xi_{d',k'}$.  
Without loss of generality, we can assume that both
$A$ and $A'$ contain the origin; Indeed, translating the supports amounts to
multiplying the whole system by a monomial, which does not affect the number of
positive roots. Then $(A\times\{0\})\cup(\{0\}\times A')\subset \Z^{d+d'}$ has
$(d+d')+(k+k')+1$ points and supports a system (the union of the original
systems) with $\Xi_{d,k} \Xi_{d',k'}$ nondegenerate positive solutions (the
Cartesian product of the solutions sets of the original systems).  Therefore,
$\Xi_{d+d',k+k'} \ge \Xi_{d,k} \Xi_{d',k'}$. 

The equality $\Xi_{1,k}=k+1$ comes from the fact that a
univariate polynomial with $k+2$ monomials cannot have more than $k+1$ positive
solutions by Descartes' rule of signs (and the polynomial $\prod_{i=1}^{k+1}(x-i)$ reaches this bound). 

Finally, $\Xi_{d,1}=d+1$ was proved in \cite[Thm. A]{B2} and $\Xi_{2,2} \geq 7$  has been recently shown by B. El Hilany \cite[Thm.~1.2]{Boulos} using tropical geometry. 
\end{proof}

\begin{remark}\label{increasing}
It is known that $\Xi_{d,0}=1$ (see Proposition \ref{prop:posorthantsimplex}). Moreover, $\Xi_{d,k+1} \geq \Xi_{d,k}$ is obvious (adding one monomial with a very small coefficient
does not decrease the number of nondegenerate positive solutions). Then,
by setting $\Xi_{0,k}=1$, Part (1) of Proposition~\ref{prop:superadditive} can be
extended to allow zero values for $d$ and $k$.
Consequently, $\Xi_{d',k'} \geq \Xi_{d,k}$ if $d' \geq d$ and $k' \geq k$.
\end{remark}

The following consequences of Proposition~\ref{prop:superadditive} have been
observed before. Part (1) comes from a system of univariate polynomials in
independent variables, and part (2) was proved by Bihan, Rojas and Sottile in \cite{BRS}.

\begin{corollary}
\label{coro:k,d=1}
\begin{enumerate}
\item If $k_1+\dots+k_d=k$ is an integer partition of $k$, then we have $\Xi_{d,k} \geq \prod_{1\leq i\leq d}(k_i+1)$.
In particular,
\begin{equation}\label{E:sec}
\Xi_{d,k} \geq 
(\lfloor k/d\rfloor+1)^d.
\end{equation}
\item If $d_1+\dots+d_k=d$ is an integer partition of $d$, then $\Xi_{d,k} \geq \prod_{1\leq i\leq d}(d_i+1)$.
In particular
\begin{equation}\label{E:first}
\Xi_{d,k} \geq 
(\lfloor d/k\rfloor+1)^k.
\end{equation}
\end{enumerate}
\qed
\end{corollary}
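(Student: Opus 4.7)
The plan is to deduce both parts of the corollary as direct iterated applications of Proposition~\ref{prop:superadditive}. For Part~(1), given an integer partition $k_1+\dots+k_d=k$, I would apply the superadditivity inequality of Part~(1) of that proposition $d-1$ times, writing $\Xi_{d,k}$ as $\Xi_{1+\dots+1,\,k_1+\dots+k_d}$ (with $d$ ones in the first coordinate), to obtain $\Xi_{d,k}\ge\prod_{i=1}^d\Xi_{1,k_i}$. I would then substitute the Descartes identity $\Xi_{1,k_i}=k_i+1$ from Part~(2) of the proposition to get the claimed bound. Part~(2) of the corollary is the dual statement: iterate superadditivity in the other coordinate, writing $\Xi_{d,k}$ as $\Xi_{d_1+\dots+d_k,\,1+\dots+1}$ with $k$ ones, to obtain $\Xi_{d,k}\ge\prod_{j=1}^k\Xi_{d_j,1}$, and then substitute $\Xi_{d_j,1}=d_j+1$ from Part~(3).

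For the two particular cases, I would take the partition of $k$ (respectively $d$) into $d$ (respectively $k$) nearly equal pieces. For \eqref{E:sec}, write $k=qd+r$ with $q=\lfloor k/d\rfloor$ and $0\le r<d$, and choose $r$ parts equal to $q+1$ and $d-r$ parts equal to $q$; this yields $\prod_i(k_i+1)=(q+2)^r(q+1)^{d-r}\ge(q+1)^d=(\lfloor k/d\rfloor+1)^d$. The bound \eqref{E:first} follows by the completely symmetric argument with the roles of $d$ and $k$ exchanged.

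There is no substantial obstacle here: the full corollary is an immediate consequence of the preliminaries already assembled, which is consistent with the \qed placed at the statement itself. The only mild subtlety is that the fully balanced partition $k_i=\lfloor k/d\rfloor$ may sum to strictly less than $k$, which is handled either by using the nearly-balanced partition above or, equivalently, by invoking the monotonicity $\Xi_{d,k'}\ge\Xi_{d,k}$ for $k'\ge k$ recorded in Remark~\ref{increasing}.
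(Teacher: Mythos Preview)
Your proposal is correct and is exactly the argument the paper has in mind: the corollary carries a \qed\ because it follows immediately from iterating Part~(1) of Proposition~\ref{prop:superadditive} together with the values $\Xi_{1,k}=k+1$ and $\Xi_{d,1}=d+1$ from Parts~(2) and~(3). Your handling of the ``in particular'' clauses via a nearly balanced partition (or equivalently via Remark~\ref{increasing}) is also the natural one.
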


Observe that both bounds specialize to 
\begin{equation}\label{eq:Xidd}\Xi_{d,d}\ge 2^d\end{equation} 
for $k=d$, but a better bound of $\Xi_{2d,2d}\ge 7^d$ follows from parts 1 and 4 of Proposition~\ref{prop:superadditive}.

In Section~\ref{sec:asymptotics} we will be interested in the asymptotics of
$\Xi_{d,k}$ for big $d$ and $k$.

\begin{theorem}
\label{thm:limit_exists}
Let $d,k\in \Z_{>0}$. Then the following limit exists:
\[
\lim_{n\to\infty} (\Xi_{dn,kn})^{1/({dn+kn})} \in [1,\infty].
\]
Moreover the limit depends only on the ratio $d/k$ and it is bounded from below by $(\Xi_{d,k})^{1/(d+k)}$.
\end{theorem}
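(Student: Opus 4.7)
The plan is to apply Fekete's lemma to the sequence $\log \Xi_{dn,kn}$. Fix $d,k \in \Z_{>0}$ and set $a_n := \Xi_{dn,kn}$. Applying part~(1) of Proposition~\ref{prop:superadditive} to $(dm, km)$ and $(dn, kn)$ gives
\[
a_{m+n} \;=\; \Xi_{d(m+n),\, k(m+n)} \;\geq\; \Xi_{dm,km}\,\Xi_{dn,kn} \;=\; a_m\, a_n,
\]
so $(\log a_n)_{n\geq 1}$ is a superadditive sequence of nonnegative reals (nonnegativity follows from $\Xi_{dn,kn}\geq 1$, cf.\ Remark~\ref{increasing}). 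Fekete's lemma then yields
\[
\lim_{n\to\infty}\frac{\log a_n}{n} \;=\; \sup_{n\geq 1} \frac{\log a_n}{n} \;\in\; [0,+\infty],
\]
and exponentiating and raising to the power $1/(d+k)$ produces the desired limit
\[
\xi_{d,k} \;=\; \lim_{n\to\infty} (\Xi_{dn,kn})^{1/(dn+kn)} \;=\; \sup_{n\geq 1}(\Xi_{dn,kn})^{1/(n(d+k))} \;\in\; [1,+\infty].
\]
Taking $n=1$ in the supremum immediately gives the asserted lower bound $\xi_{d,k} \geq (\Xi_{d,k})^{1/(d+k)}$.

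For the claim that $\xi_{d,k}$ depends only on the ratio $d/k$, I would argue as follows. Write $d/k$ in lowest terms as $r/s$, so that $d = ar$ and $k = as$ for some $a \in \Z_{>0}$. Then the sequence $(\Xi_{dn,kn})^{1/(n(d+k))} = (\Xi_{r(an),\, s(an)})^{1/(an(r+s))}$ is the $a$-th subsequence of $(\Xi_{rm,sm})^{1/(m(r+s))}$, whose limit exists and equals $\xi_{r,s}$ by the preceding paragraph applied to the pair $(r,s)$. Hence $\xi_{d,k} = \xi_{r,s}$, and since any two pairs $(d,k)$ and $(d',k')$ with the same ratio reduce to the same $(r,s)$, the limit depends only on $d/k$.

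The main obstacle is essentially nonexistent: this is a textbook application of Fekete's lemma, with superadditivity supplied cleanly by Proposition~\ref{prop:superadditive}(1). The only point requiring a small amount of care is the reduction to coprime $(r,s)$ in the ratio argument, which is handled by noting that the relevant sequence is a subsequence of the already-convergent one.
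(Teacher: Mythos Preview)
Your proof is correct and follows essentially the same approach as the paper: both derive superadditivity of $\log \Xi_{dn,kn}$ from Proposition~\ref{prop:superadditive}(1), conclude that the limit equals the supremum, and handle the ratio claim via a common-subsequence argument. The only cosmetic difference is that you cite Fekete's lemma by name whereas the paper reproves it inline (additionally invoking the monotonicity from Remark~\ref{increasing}, which your cleaner argument does not need).
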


\begin{proof}
For each $n$, let $a_n:= \log (\Xi_{dn,kn})$, so that the limit we want to compute is $\lim_{n\to\infty} e^{a_n/(d+k)n}$ and we can instead look at 
$\lim_{n\to\infty} (a_n/(d+k)n)$. 
Since $(a_n)_{n\in \Z_{> 0}}$ is increasing (Remark \ref{increasing}) and $a_{pn_0} \ge p a_{n_0}$ for every
positive integer $p$ (Proposition~\ref{prop:superadditive}), we have $a_n \ge
  \lfloor\frac{n}{n_0}\rfloor a_{n_0}$ for all $n, n_0\in\Z_{> 0}$.
Thus:
\[
\liminf_{n\to \infty} \frac{a_n}{n} \ge \liminf_{n\to \infty}
\left\lfloor\frac{n}{n_0}\right\rfloor \frac{a_{n_0}}{n} = 
\frac{a_{n_0}}{n_0} , \quad \forall n_0\in \Z_{>0}.
\]
Consequently,
\[
\liminf_{n\to\infty} \frac{a_n}{(d+k)n} =\frac1{d+k}\liminf_{n\to\infty}
\frac{a_n}n  \ge \frac1{d+k} \, \frac{a_{n_0}}{n_0}, \qquad \forall n_0 \in
\Z_{>0}.
\]
In particular,
\[
  \liminf_{n\to\infty} \frac{a_n}{(d+k)n} 
  \ge 
  \sup_{n\in\Z_{> 0}} \frac{a_n}{(d+k)n}
  \ge
   \limsup_{n\to\infty} \frac{a_n}{(d+k)n},
\]
which implies that the limit exists and equals the supremum. To show that the
limit depends only on the ratio $d/k$, observe that if $(d,k)$ and $(d',k')$ are
proportional vectors then the sequences $(a_{n}/(d+k)n)_{n\in \Z_{> 0}}$ and
$(a'_{n}/(d'+k')n)_{n\in \Z_{> 0}}$ (where $a'_n:= \log (\Xi_{d'n,k'n})$) have a common subsequence.
\end{proof}

Note that the statement implies the existence of the limit
\[
\xi_{\alpha,\beta} =
\lim_{\substack{n\rightarrow\infty\\ \alpha n,\beta n\in \Z}}(\Xi_{ \alpha n, \beta n})^{1/(\alpha n + \beta n)} \in [1,\infty]
\]
for any positive rational numbers $\alpha, \beta\in \Q_{>0}$ and for $\alpha=\frac{d}{d+k}$ (where $d,k \in \Z_{>0}$)
we have $\xi_{\alpha,1-\alpha}=\lim_{n\to\infty} (\Xi_{dn,kn})^{1/({dn+kn})}$.
Also, since the limit in Theorem~\ref{thm:limit_exists} depends only on $d/k$, we only need to consider the function $\xi$ for one point along each ray in the positive orthant. We choose the segment defined by $\alpha+\beta=1$ because along this segment $\xi$ is log-concave:

\begin{proposition}\label{prop:concavity}
The function 
$\alpha \mapsto \xi_{\alpha,1-\alpha}$ is log-concave over $(0,1)\cap \Q$.
\end{proposition}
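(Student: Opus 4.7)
The plan is to deduce log-concavity from the multiplicative super-additivity in part (1) of Proposition~\ref{prop:superadditive}, namely $\Xi_{d+d',k+k'}\ge \Xi_{d,k}\Xi_{d',k'}$, together with the existence and direction-only dependence of the limit $\xi$ established in Theorem~\ref{thm:limit_exists}.

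Fix two rationals $\alpha_1,\alpha_2 \in (0,1)\cap\Q$ and write $\alpha_i = d_i/p_i$ with $d_i,p_i \in \Z_{>0}$, $k_i := p_i - d_i > 0$, for $i=1,2$. Given positive integers $N_1,N_2$ and an auxiliary integer $n$, I would apply Proposition~\ref{prop:superadditive}(1) to the pairs $(nN_1d_1,nN_1k_1)$ and $(nN_2d_2,nN_2k_2)$, obtaining
\[
\Xi_{n(N_1d_1+N_2d_2),\,n(N_1k_1+N_2k_2)} \ \ge\ \Xi_{nN_1d_1,nN_1k_1}\cdot\Xi_{nN_2d_2,nN_2k_2}.
\]
Taking logarithms, dividing by $n(N_1p_1+N_2p_2)$, and letting $n\to\infty$, each of the three factors converges by Theorem~\ref{thm:limit_exists} to the corresponding value of $\log\xi$ (because the limit depends only on the direction $(d,k)$, so rescaling the indices by any fixed positive integer does not affect it). Writing $t := N_1p_1/(N_1p_1+N_2p_2) \in (0,1)\cap\Q$, the resulting inequality reads
\[
\log\xi_{\alpha,\,1-\alpha} \ \ge\ t\,\log\xi_{\alpha_1,\,1-\alpha_1} + (1-t)\,\log\xi_{\alpha_2,\,1-\alpha_2},
\]
where $\alpha = (N_1d_1+N_2d_2)/(N_1p_1+N_2p_2) = t\alpha_1 + (1-t)\alpha_2$. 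Since $(N_1,N_2)$ ranges over $\Z_{>0}^2$, the ratio $t=N_1p_1/(N_1p_1+N_2p_2)$ attains every value in $(0,1)\cap\Q$, so this is exactly log-concavity on $(0,1)\cap\Q$.

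The only delicate point to watch is the convergence step: one has to be sure that $\Xi_{nN_id_i,nN_ik_i}^{1/(nN_ip_i)}\to \xi_{\alpha_i,1-\alpha_i}$ as $n\to\infty$ for each fixed $N_i$, and similarly for the combined term. This is exactly the content of Theorem~\ref{thm:limit_exists} once one rewrites the limit for any positive-integer scaling of a fixed direction; no further argument is needed. Everything else is bookkeeping of the convex combination, and there is no real obstacle. One minor remark worth recording is that the argument extends to the case where one of the $\xi_{\alpha_i,1-\alpha_i}$ equals $+\infty$ (with the convention $t\cdot(+\infty)=+\infty$ for $t>0$), which will be used when drawing the dichotomy in Corollary~\ref{coro:concavity}.
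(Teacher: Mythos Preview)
Your proof is correct and follows essentially the same route as the paper's: both deduce the log-concavity inequality directly from the super-multiplicativity $\Xi_{d+d',k+k'}\ge \Xi_{d,k}\Xi_{d',k'}$ of Proposition~\ref{prop:superadditive}(1) and then pass to the limit using Theorem~\ref{thm:limit_exists}. The only cosmetic difference is that the paper phrases the argument for arbitrary $(\alpha,\beta),(\alpha',\beta')\in\Q_{>0}^2$ and restricts to subsequences where all the relevant quantities are integers, whereas you clear denominators explicitly via the choice of $N_1,N_2$ and then verify that every rational $t\in(0,1)$ arises; these are equivalent bookkeeping devices.
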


\begin{proof}
For any integer $n$ and any $(\alpha,\beta) \in \Q_{>0}^2$ with $\alpha n, \beta n\in \Z$, let $a_n(\alpha,\beta)=\log (\Xi_{\alpha n, \beta n})$.
The statement is that
for any $(\alpha,\beta)$, $(\alpha',\beta')$ in $\Q_{>0}^2$ and any $\theta \in
[0,1]\cap \Q$, we have
\begin{equation}\label{concave}
\lim_{n\to\infty} \frac{1}{n} a_n(\theta (\alpha,\beta) +(1-\theta) (\alpha',\beta')) \geq \theta \lim_{n\to\infty} \frac{1}{n}a_n(\alpha,\beta)+
(1-\theta) \lim_{n\to\infty} \frac{1}{n}a_n(\alpha',\beta').
\end{equation}
Here and in what follows only values of $n$ where $\alpha n,\theta\alpha n,$ etc.~are integers are considered. This is enough since they form an infinite sequence and the limit $\xi$ is independent of the subsequence considered.

Using Proposition~\ref{prop:superadditive}, together with Remark~\ref{increasing}, we get
\begin{eqnarray*}
a_n(\theta (\alpha,\beta) +(1-\theta) (\alpha',\beta')) 
&=& \log (\Xi_{ \theta \alpha n + (1-\theta) \alpha'n, \theta \beta n +(1-\theta)\beta'n}) \\
&\geq& \log (\Xi_{ \theta \alpha n, \theta \beta n})+
\log (\Xi_{ (1-\theta) \alpha' n, (1-\theta) \beta' n}) \\
&=& a_n(\theta \alpha, \theta \beta)+ a_n((1-\theta) \alpha', (1-\theta) \beta').
\end{eqnarray*}
It remains to note that $\lim_{n\to\infty}\frac{1}{n}  a_n(\theta \alpha, \theta \beta)=\theta \lim_{n\to\infty} \frac{1}{n} a_n(\alpha,\beta)$ for any $(\alpha,\beta)$ in $\Q_{>0}^2$ and any $\theta \in
[0,1]\cap \Q$.
\end{proof}

One interesting consequence of log-concavity is:

\begin{corollary}
\label{coro:concavity}
The function $\xi_{\alpha,\beta}$ is either finite for all $(\alpha,\beta) \in \Q_{>0}^2$ or infinite for all $(\alpha,\beta) \in \Q_{>0}^2$.
\end{corollary}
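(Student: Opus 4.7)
The plan is to deduce this entirely from the log-concavity statement of Proposition~\ref{prop:concavity}, together with the trivial bound $\xi \ge 1$. Since Theorem~\ref{thm:limit_exists} tells us that $\xi_{\alpha,\beta}$ depends only on the ratio $\alpha/\beta$, it is enough to restrict attention to the segment $\alpha+\beta=1$ and study the one-variable function $\xi(\alpha):=\xi_{\alpha,1-\alpha}$ on $(0,1)\cap\Q$. The statement to prove becomes: either $\xi(\alpha)=\infty$ for all $\alpha\in(0,1)\cap\Q$, or $\xi(\alpha)<\infty$ for all such $\alpha$.

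Assume for contradiction that there exist $\alpha_0,\alpha\in(0,1)\cap\Q$ with $\xi(\alpha_0)=\infty$ and $\xi(\alpha)<\infty$. Since $\alpha\ne\alpha_0$, I would pick a rational $\alpha^*\in(0,1)$ on the far side of $\alpha$ from $\alpha_0$ (that is, $\alpha$ lies strictly between $\alpha_0$ and $\alpha^*$); the density of $\Q$ in $\R$ makes this trivial. Then $\alpha=\theta\alpha_0+(1-\theta)\alpha^*$ for some $\theta\in(0,1)\cap\Q$. Applying the log-concavity inequality \eqref{concave} from Proposition~\ref{prop:concavity} (with parameters chosen along the segment $\alpha+\beta=1$) gives
\[
\log\xi(\alpha)\;\ge\;\theta\log\xi(\alpha_0)+(1-\theta)\log\xi(\alpha^*).
\]
Since $\Xi_{d,k}\ge 1$ for all $d,k$, we have $\xi(\alpha^*)\ge 1$ and therefore $\log\xi(\alpha^*)\ge 0$. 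Combined with $\log\xi(\alpha_0)=+\infty$ and $\theta>0$, the right-hand side equals $+\infty$, forcing $\xi(\alpha)=\infty$ and contradicting our assumption.

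There is no real obstacle here: the substantive work is already contained in Proposition~\ref{prop:concavity}, and the present corollary is just the standard fact that a concave function on a convex domain cannot be finite at one interior point and $+\infty$ at another, applied to $\log\xi$ on $(0,1)\cap\Q$. The only minor point to watch is that log-concavity is only asserted at rational parameters, but this is exactly why $\alpha^*$ is chosen in $\Q$, which is possible because $\Q$ is dense in $\R$.
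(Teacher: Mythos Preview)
Your proof is correct and follows essentially the same approach as the paper: both reduce to the segment $\alpha+\beta=1$, express the point where $\xi$ is allegedly finite as a rational convex combination of the point where $\xi=\infty$ and a third rational point in $(0,1)$, then apply log-concavity together with $\xi\ge 1$ to derive a contradiction. The only cosmetic difference is that the paper parametrizes the third point explicitly via a small rational $\epsilon$ and argues directly rather than by contradiction.
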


\begin{proof}
Since $\xi_{\alpha,\beta}$ depends only on $\alpha / \beta$, there is no loss
of generality in assuming $\beta=1-\alpha$ and $\alpha\in (0,1)\cap \mathbb Q$.
Suppose $\xi_{\alpha,1-\alpha}=\infty$ for some $\alpha\in (0,1)\cap \Q$ and let us show that $\xi_{\beta,1-\beta}=\infty$ for every other $\beta\in (0,1)\cap \Q$.  For this, let $\gamma=(1+\epsilon)\beta -\epsilon \alpha$ for a sufficiently small $\epsilon\in \Q_{>0}$, so that $\gamma \in (0,1)\cap \Q$. Then $\beta = \frac1{1+\epsilon} \gamma + \frac\epsilon{1+\epsilon} \alpha$. By log-concavity:
\[
\xi_{\beta,1-\beta} \ge {\xi_{\gamma,1-\gamma}}^{\frac1{1+\epsilon}} \ {\xi_{\alpha,1-\alpha}}^{\frac\epsilon{1+\epsilon}} = \infty.
\]
\end{proof}

\begin{remark}
\label{rem:rational}
Although we have defined $\xi$ only for rational values in order to avoid technicalities, log-concavity and Proposition~\ref{prop:superadditive} easily imply that $\xi$ admits a unique continuous extension to $\alpha,\beta\in \R_{>0}$ and that
this extension satisfies
\[
\xi_{\alpha,\beta}=
\lim_{n\rightarrow\infty}(\Xi_{\lfloor \alpha n\rfloor,\lfloor \beta n\rfloor})^{1/(\alpha n+\beta n)} =
\lim_{n\rightarrow\infty}(\Xi_{\lceil \alpha n\rceil,\lceil \beta n\rceil})^{1/(\alpha n+\beta n)}.
\]
\end{remark}

\section{Positively decorated simplices and Viro polynomial systems}
\label{sec:oriented}

We start by considering systems of $d$ equations in $d$ variables whose support
$\mathcal A=\{w_1,\ldots, w_{d+1}\}\subset \Z^d$ is the set of vertices of a $d$-simplex.
This case is a basic building block in our construction.

\begin{definition} A $d\times (d+1)$ matrix $M$ with real entries is called
  \emph{positively spanning} if all the values $(-1)^i{\minor}(M,i)$ are nonzero and have 
  the same sign, where $\minor(M,i)$ is the determinant of the square matrix
  obtained by removing the $i$-th column.  \end{definition}

The terminology ``positively spanning'' comes from the fact that if $\mathcal A=\{w_1,\ldots, w_{d+1}\}$
is the set of columns of $M$, saying that $M$ is positively spanning is equivalent to
saying that any vector in $\R^d$ is a linear combination \emph{with positive
coefficients} of $w_1,\ldots, w_{d+1}$.

\begin{proposition} \label{P:invariant} Let $M$ be a full rank $d\times (d+1)$
  matrix with real coefficients. The following statements are equivalent:
  \begin{myenumerate} \item the matrix $M$ is positively spanning; 
    \item for any
      $L\in\GL_d(\R)$, $L\cdot M$ is a positively spanning matrix; 
      \item for any
      permutation matrix $P\in\mathfrak S_{d+1}$, $M\cdot P$ is a positively
      spanning matrix; 
    \item all the coordinates of any non-zero vector in the kernel of
      the matrix are non-zero and have the same sign;
      \item the origin belongs to the interior of the convex hull of the column vectors of $M$.
      \item every vector in $\R^d$ is a nonnegative linear combination of the columns of $M$.
 \item there is no $w\in \R^d\setminus \{0\}$ s.t. $w\cdot M\ge 0$.
  \end{myenumerate} 
\end{proposition}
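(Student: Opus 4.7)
My plan is to cluster the seven conditions into three groups---the minor-sign conditions (1)--(3), the kernel condition (4), and the convex-geometric conditions (5)--(7)---and connect them through the central bridges (1)$\Leftrightarrow$(4)$\Leftrightarrow$(5).

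For (1)$\Leftrightarrow$(4) I would invoke Cramer's rule in its cofactor form: the $(d+1)\times(d+1)$ matrix obtained by placing any row of $M$ on top of $M$ has two equal rows, so its determinant vanishes, and Laplace expansion along the bottom row yields $Mv=0$ for $v := ((-1)^{j+1}\minor(M,j))_{j=1,\ldots,d+1}$. Since $M$ has full rank $d$, its kernel is one-dimensional and $v\neq 0$, so $v$ spans $\ker M$; the signs of the signed minors thus match, up to a global flip, the signs of the coordinates of any kernel generator. The equivalences (1)$\Leftrightarrow$(2) and (1)$\Leftrightarrow$(3) are then immediate, since left multiplication by $L\in\GL_d(\R)$ preserves $\ker M$ and right multiplication by a permutation matrix $P$ merely permutes its coordinates; in neither case can the ``all coordinates share a common nonzero sign'' property be created or destroyed.

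For the bridge (4)$\Leftrightarrow$(5), let $w_1,\ldots,w_{d+1}$ denote the columns of $M$. Given (4), I would rescale a kernel generator to a vector $(\lambda_1,\ldots,\lambda_{d+1})$ with all $\lambda_i>0$ and $\sum_i \lambda_i = 1$; uniqueness (up to scalar) of the linear relation among the $w_i$, together with $\sum \lambda_i \neq 0$, rules out any affine dependence among them, so the columns form a $d$-simplex and the identity $0=\sum_i \lambda_i w_i$ exhibits the origin as a strictly interior convex combination. Conversely, if $0$ lies in the interior of $\conv(w_i)$, then this convex hull has nonempty interior in $\R^d$, which forces affine independence; the unique barycentric representation of $0$ must then have all weights positive, producing a kernel vector with the desired sign pattern.

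Finally I would close with the convex-duality triangle (5)$\Rightarrow$(6)$\Rightarrow$(7)$\Rightarrow$(5). The first step uses that any $u\in\R^d$ is a positive multiple of a convex combination of the $w_i$ whenever a ball around $0$ sits inside $\conv(w_i)$; the second is a one-line contradiction, since $-w = \sum \mu_i w_i$ with $\mu_i\ge 0$ and the hypothetical $w^T M\ge 0$ give $-\|w\|^2 = \sum \mu_i (w^T w_i) \ge 0$; and the third is a direct application of the supporting hyperplane theorem, which furnishes such a forbidden $w$ whenever $0$ fails to lie in the interior of $\conv(w_i)$. The main subtlety is at (4)$\Rightarrow$(5), where the full-rank hypothesis on $M$ is essential to upgrade ``$0$ is a positive convex combination of the $w_i$'' from a relative-interior statement in the affine hull to an honest interior statement in $\R^d$.
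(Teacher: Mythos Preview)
Your proof is correct and follows essentially the same approach as the paper: Cramer's rule for (1)$\Leftrightarrow$(4), invariance of the kernel under $\GL_d$ and permutations for (2) and (3), and separation/Farkas for the convex block (5)--(7). The only cosmetic differences are that the paper handles (5)$\Leftrightarrow$(7) in one stroke via Farkas' Lemma rather than your cycle (5)$\Rightarrow$(6)$\Rightarrow$(7)$\Rightarrow$(5), and that you are more explicit than the paper at (4)$\Leftrightarrow$(5) in using full rank to pass from relative interior to honest interior.
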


\begin{proof} The equivalence $(1)\Leftrightarrow (4)$ follows from Cramer's
rule, while $(2)\Rightarrow (1)$ and $(3)\Rightarrow (1)$ are proved directly by
  instantiating $L$ and $P$ to the identity matrix.  The implication $(1)\Rightarrow (2)$
  follows from 
  \[
  \sign((-1)^i{\minor}(L\cdot M,i)) = \sign(\det(L))\cdot
  \sign((-1)^i{\minor}(M,i)),
  \] 
  while $(3) \Leftrightarrow (4)$ is a consequence of
  the fact that permuting the columns of $M$ is equivalent to permuting the
  coordinates of the kernel vectors.
  The equivalence between $(4)$ and
  $(5)$ follows from the definition of convex hull:  
  the origin is in the
  interior of the convex hull of the column vectors if and only if it can be
written as a positive linear combination of these vectors. The equivalence between $(5)$ and
  $(6)$ is obvious and the equivalence between $(5)$ and $(7)$ follows from Farkas' Lemma.
   \end{proof}

\begin{proposition}\label{prop:posorthantsimplex}
Assume that $\mathcal A=\{w_1,\ldots, w_{d+1}\}$ is the set of vertices of a
$d$-simplex in $\R^d$, and consider the polynomial system with real coefficients
\[
f_i(X)=\sum_{j=1}^{d+1}C_{ij} X^{w_j},\quad 1\leq i\leq d.
\]
The system $f_1(X)=\dots=f_d(X)=0$ has at most one non-degenerate positive solution
and it has one non-degenerate positive solution
if and only if the $d\times (d+1)$ matrix $C=(C_{ij})$ is positively spanning. 
\end{proposition}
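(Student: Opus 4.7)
My plan is to turn the system into a linear one via a monomial change of variables and then apply Cramer's rule. First I would multiply every $f_i$ by $X^{-w_{d+1}}$, which preserves both the positive zero set and the non-degeneracy of any root (because the factor $X^{-w_{d+1}}$ is positive and the extra term produced by the product rule vanishes at any zero of $f_i$). This allows me to assume $w_{d+1}=0$. Since $\{w_1,\ldots,w_{d+1}\}$ is the vertex set of a $d$-simplex, the remaining $w_1,\ldots,w_d$ are then linearly independent, so the monomial substitution $Y_j = X^{w_j}$ is a diffeomorphism of $\mathbb{R}^d_{>0}$ onto itself. Under it the system becomes the affine-linear system $C' Y + c = 0$, where $C' = (C_{ij})_{1\le i,j\le d}$ and $c=(C_{i,d+1})_i$ is the last column of $C$.

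The Jacobian of this new system is exactly $C'$, so any non-degenerate solution must have $\det(C')\neq 0$ and is then unique, namely $Y^\ast = -(C')^{-1} c$. This already gives the ``at most one'' half. To characterize positivity of $Y^\ast$ I would apply Cramer's rule and then move the inserted column $c$ from position $j$ to position $d+1$ through $d-j$ adjacent transpositions; after collecting the resulting signs I expect the clean formula
\[
Y^\ast_j \;=\; \frac{(-1)^j\,\minor(C,j)}{(-1)^{d+1}\,\minor(C,d+1)}.
\]
Hence $Y^\ast$ is positive coordinate-wise if and only if the $d+1$ quantities $(-1)^i\minor(C,i)$ share a common nonzero sign, which is exactly the positively spanning condition of Definition~3.1.

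Combining the two halves closes the proof in both directions: a non-degenerate positive solution forces $\det(C')\neq 0$, and the displayed formula then forces $C$ to be positively spanning; conversely, positive spanning implies $\det(C')\neq 0$ (one of its minors is $\minor(C,d+1)$), hence $Y^\ast$ is well-defined and positive, and its preimage under the monomial diffeomorphism is the unique non-degenerate positive solution of the original system. The main obstacle I foresee is the sign bookkeeping when identifying the Cramer cofactors with the ordered minors $\minor(C,i)$, but this amounts to a short combinatorial check once the column ordering is fixed consistently.
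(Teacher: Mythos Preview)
Your argument is correct and follows essentially the same route as the paper: reduce to $w_{d+1}=0$, pass to the linear system via the monomial diffeomorphism $Y_j=X^{w_j}$, and read off positivity of the unique solution from the signed minors of $C$. The only difference is packaging: the paper invokes part~(4) of Proposition~\ref{P:invariant} (a positive solution with $Y_{d+1}=1$ is exactly a positive kernel vector of $C$), whose proof already absorbs the Cramer computation, whereas you carry out that Cramer/sign bookkeeping explicitly.
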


\begin{proof}
Multiplying the system by $X^{-w_{d+1}}$ (which does not change
the set of positive solutions), we can assume without loss of
generality that $w_{d+1}=\mathbf 0$.
Consider the monomial map  $(X_1,\dots,X_d) \to (X^{w_1}, \dots, X^{w_d})$ which bijects the positive orthant to itself. The map is
invertible since $\mathcal A$ is affinely independent, and the
inverse map transforms the system $f_1(X)=\dots=f_d(X)=0$ into a linear system with $C$ as its coefficient matrix.
Then the statement follows from Proposition \ref{P:invariant}: by part (4) of the proposition, the unique solution of the linear system lies in the positive orthant if, and only if, $C$ is positively spanning.
\end{proof}

Consider now a set $\mathcal A=\{w_1,\ldots, w_{n}\}\subset \Z^d$ and assume
that its convex hull is a full-dimensional polytope $Q$. Let $\Gamma$ be a
triangulation of $Q$ with vertices in $\mathcal A$. Assume that $\Gamma$ is a
\emph{regular} triangulation, which means that there exists a convex function
$\nu:Q \rightarrow \R$ which is affine on each simplex of $\Gamma$, but not
affine on the union of two different facets of $\Gamma$ (such triangulations
are sometimes called coherent or convex in the literature; see \cite{LRS} for
extensive information on regular triangulations).  We say that $\nu$, which is
sometimes called the \emph{lifting} function, \emph{certifies} the regularity
of $\Gamma$.  Let $C$ be a $d\times n$ matrix with real entries.
This matrix defines a map $\phi:\mathcal A\rightarrow \mathbb R^d$ as in Theorem~\ref{introthm:nbpossols},
by setting $\phi(w_i)$ to the $i$th column of $C$.
We say that
$C$ \emph{positively decorates} a facet $\tau={\rm
conv}(w_{i_1},\ldots,w_{i_{d+1}})\in\Gamma$ if the $d\times(d+1)$ submatrix of
$C$ given by the columns numbered by $\{i_1,\ldots, i_{d+1}\}$ is positively
spanning.
The associated \emph{Viro polynomial system} is
\begin{equation}
\label{E:Virosystem}
f_{1,t}(X)=\cdots=f_{d,t}(X)=0,
\end{equation}
where $t$ is a positive parameter and
\[
f_{i,t}(X)=\sum_{j=1}^n C_{ij}t^{\nu(w_j)}X^{w_j} \in
\R[X_1,\ldots,X_d] , \quad i=1,\ldots,d.
\]

The following result is a variation of the main theorem in~\cite{St}. There,
the number of \emph{real} roots of the system \eqref{E:Virosystem} is bounded
below by the number of \emph{odd} facets in $\Gamma$
(facets with odd normalized volume).
Proposition~\ref{prop:posorthantsimplex} allows us to change that to a lower bound for \emph{positive} roots in terms of \emph{positively decorated} simplices.

\begin{theorem}\label{thm:nbpossols}
Let $\Gamma$ be a regular triangulation of $\mathcal A=\{w_1,\ldots, w_{n}\}\subset \Z^d$ and let $C\in \R^{d\times n}$. Then,
there exists $t_0\in\R_+$ such that for all $0<t<t_0$ the number
of non-degenerate positive solutions of the system \eqref{E:Virosystem}
is bounded from below by the number of facets in $\Gamma$ which are
positively decorated by $C$.
\end{theorem}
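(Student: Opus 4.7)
The plan is to apply a tropical-style rescaling to each positively decorated facet of $\Gamma$ separately, so that in the limit $t\to 0$ the rescaled Viro system reduces on each such facet to the simplex system of Proposition~\ref{prop:posorthantsimplex}. Since that simplex system has a unique non-degenerate positive solution exactly when the facet is positively decorated, a continuity / implicit function theorem argument will then lift each such limit solution to a genuine positive solution of \eqref{E:Virosystem} for all sufficiently small $t>0$. The final task is to show that the solutions produced by different facets are distinct, at which point counting gives the desired lower bound.

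Concretely, for each facet $\tau=\conv(w_{i_1},\dots,w_{i_{d+1}})$ of $\Gamma$, let $\ell_\tau(w)=c_\tau+\langle y_\tau,w\rangle$ be the unique affine function on $\R^d$ agreeing with $\nu$ on $\tau$. By the regularity of $\Gamma$ certified by $\nu$, the difference $\nu-\ell_\tau$ vanishes on $\tau$ and is strictly positive at every $w_j\in\mathcal A\setminus\tau$. Perform the monomial substitution $X=t^{-y_\tau}Z$ (componentwise) in \eqref{E:Virosystem} and divide by $t^{c_\tau}$ to obtain the equivalent system
\[
g_{i,t}^{(\tau)}(Z):=\sum_{j=1}^n C_{ij}\,t^{\nu(w_j)-\ell_\tau(w_j)} Z^{w_j}=0,\quad i=1,\dots,d.
\]
By the sign property above, as $t\to 0^+$ this system converges (coefficient by coefficient, uniformly on compact subsets of $\R_{>0}^d$) to the simplex system $\sum_{j:w_j\in\tau} C_{ij}Z^{w_j}=0$. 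If $\tau$ is positively decorated by $C$, Proposition~\ref{prop:posorthantsimplex} yields a unique non-degenerate positive solution $Z_\tau^*$ of this limit system. Non-degeneracy means the Jacobian of the limit system is invertible at $Z_\tau^*$, so by the implicit function theorem there is a $t_\tau>0$ and a smooth branch $Z_\tau(t)$ of non-degenerate positive solutions of $g_{i,t}^{(\tau)}=0$ for $0<t<t_\tau$, with $Z_\tau(t)\to Z_\tau^*$. Undoing the substitution gives a non-degenerate positive solution $X_\tau(t):=t^{-y_\tau}Z_\tau(t)$ of \eqref{E:Virosystem}.

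The main obstacle is to argue that solutions coming from different positively decorated facets are pairwise distinct and that a single $t_0$ works uniformly. For distinctness, suppose $\tau\neq\tau'$. Since $\ell_\tau$ and $\ell_{\tau'}$ are distinct affine functions (they agree with $\nu$ on different $d$-simplices of a regular triangulation), we have $y_\tau\neq y_{\tau'}$, so some coordinate satisfies $y_{\tau,r}\neq y_{\tau',r}$. Then $X_{\tau,r}(t)/X_{\tau',r}(t)=t^{y_{\tau',r}-y_{\tau,r}}\cdot Z_{\tau,r}(t)/Z_{\tau',r}(t)$ tends to $0$ or $\infty$ as $t\to 0^+$ while the factor $Z_{\tau,r}(t)/Z_{\tau',r}(t)$ stays bounded and bounded away from $0$, so $X_\tau(t)\neq X_{\tau'}(t)$ for all sufficiently small $t$. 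Taking $t_0$ to be the minimum of the finitely many $t_\tau$ (and a further shrinkage to guarantee the pairwise distinctness above) then yields, for every $0<t<t_0$, one non-degenerate positive solution per positively decorated facet of $\Gamma$, completing the proof.
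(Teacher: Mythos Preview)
Your proof is correct and follows essentially the same approach as the paper: for each positively decorated facet you use the affine piece $\ell_\tau$ of the lifting function to rescale via $X=t^{-y_\tau}Z$, reduce to the simplex system of Proposition~\ref{prop:posorthantsimplex} as $t\to 0$, and then invoke an implicit-function-type argument to lift the non-degenerate solution. Your distinctness argument via the coordinate ratio $t^{y_{\tau',r}-y_{\tau,r}}$ is a slight rephrasing of the paper's observation that the sets $K\cdot t^{-\alpha_\ell}$ become pairwise disjoint, but the underlying idea is identical.
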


\begin{proof}Let $\tau_1,\ldots,\tau_m$ be the facets of $\Gamma$ which are
  positively decorated by $C$. For all $\ell \in \{1,\ldots, m\}$, the function $\nu$ is affine on $\tau_{\ell}$, thus
there exist $\alpha_{\ell}=(\alpha_{1 \ell},\ldots,\alpha_{d \ell}) \in \R^d$ and $\beta_{\ell} \in \R$ such that
$\nu(x)=\langle \alpha_{\ell},x \rangle +\beta_{\ell}$ for any
$x=(x_1,\ldots,x_d)$ in the simplex $\tau_{\ell}$.
Set $Xt^{-\alpha_{\ell}}=(X_1t^{-\alpha_{1 \ell}},\ldots, X_dt^{-\alpha_{d
\ell}})$. Since $\nu$ is convex and not affine on the union of two distinct
facets of $\Gamma$,
we get
\begin{equation}\label{E:Virosyst}
\frac{f_{i,t}(Xt^{-\alpha_{\ell}})}{t^{\beta_{\ell}}}
=
f_{i}^{(\ell)}(X)+r_{i,t}^{(\ell)}(X), \quad i=1,\ldots,d,
\end{equation}
where $f_{i}^{(\ell)}(X)=\sum_{w_j \in \tau_{\ell}} C_{ij}X^{w_j}$ and
$r_{i,t}^{(\ell)}(X)$ is a polynomial each of whose coefficients is equal to a positive power of $t$ multiplied by a coefficient of $C$.
Since $\tau_{\ell}$ is positively decorated by $C$, the system $f_{1}^{(\ell)}(X)=\cdots=f_{d}^{(\ell)}(X)=0$ has
one non-degenerate positive solution $z_{\ell}$ by Proposition \ref{prop:posorthantsimplex}. It follows that the system $f_{1}^{(\ell)}(X)+r_{1,t}^{(\ell)}(X)=\cdots=f_{d}^{(\ell)}(X)+r_{d,t}^{(\ell)}(X)=0$ has a non-degenerate solution close to $z_{\ell}$ for $t>0$ small enough. More precisely, for all $\varepsilon>0$, there exists
  $t_{\varepsilon, \ell}>0$ such that for all
  $0<t<t_{\varepsilon, \ell}$, there exists a non-degenerate solution $z_{\ell, t}$ of $f_{1}^{(\ell)}(X)+r_{1,t}^{(\ell)}(X)=\cdots=f_{d}^{(\ell)}(X)+r_{d,t}^{(\ell)}(X)=0$ such that $\lVert z_{\ell, t} -
  z_{\ell}\rVert <\varepsilon$. Then using \eqref{E:Virosyst} we get $f_{1,t}(z_{\ell, t}t^{-\alpha_{\ell}})=\cdots= f_{d,t}(z_{\ell, t}t^{-\alpha_{\ell}})=0$.
Now, choose $\varepsilon$ small enough so that the balls of radius $\varepsilon$ centered at $z_1,\ldots,z_m$ are contained in a compact set $K \subset \R_{>0}^d$. Since the vectors
$\alpha_\ell$ are distinct, there exists $\tau>0$
such that for all $0<t<\tau$ the sets $K \cdot t^{-\alpha_{\ell}}= \{(X_1t^{-\alpha_{1
\ell}},\ldots, X_dt^{-\alpha_{d \ell}}) \, | \, (X_1,\ldots,X_d) \in K\}$,
$\ell=1,\ldots,m$, are pairwise disjoint. Set $t_0 = \min(\tau, t_{\varepsilon,
1}, \ldots, t_{\varepsilon, m})$. Then, for $0<t<t_0$ each of these sets
$K \cdot t^{-\alpha_{\ell}}$ contains a non-degenerate positive solution $z_{\ell, t}t^{-\alpha_{\ell}}$
of the system \eqref{E:Virosystem}.
\end{proof}

\section{Duality between regular and positively decorable complexes}
\label{sec:dualgraph}

In this section, we study the two combinatorial properties on $\Gamma$  that are needed in order to apply  Theorem \ref{thm:nbpossols}: being (part of) a regular triangulation and having (many, hopefully all) positively decorated simplices. As we will see, these properties turn out to be dual to one another. Our combinatorial framework is that of pure, abstract simplicial complexes:

\begin{definition}
  A \emph{pure abstract simplicial complex} of dimension $d$ on $n$ vertices
  (abbreviated $(n,d)$-complex) is a finite set $\Gamma=\{\tau_1,\ldots,\tau_\ell\}$,
  where for any $i\in\{1,\ldots,\ell\}$, $\tau_i$ is a subset of cardinality $d+1$ of
  $[n]:=\{1,\dots,n\}$. The elements of $\Gamma$ are called \emph{facets} and their number 
  (the number $\ell$ in our notation) is the \emph{size} of $\Gamma$.
  A subset of cardinality $2$ of a facet is called an edge of $\Gamma$.
\end{definition}

Let $\mathcal A=\{w_1,\ldots,w_n\}$ be a configuration of $n$ points in $\R^d$
(by which we mean an ordered set; that is, we implicitly have a bijection
between $\mathcal A$ and $[n]$). An $(n,d)$-complex $\Gamma$ is said to be
\emph{supported on $\mathcal A$} if the simplices with vertices in $\mathcal A$
indicated by $\Gamma$, together with all their faces, form a geometric
simplicial complex, see \cite[Definition 2.3.5]{Maund}. 
Typical examples of
$(n,d)$-complexes supported on point configurations are the boundary complexes
of simplicial $(d+1)$-polytopes, or triangulations of point sets. The following
definition and proposition relate these two notions:

\begin{definition}\label{def:regular}
  An $(n,d)$-complex $\Gamma$ is said to be
  \emph{regular} if it is isomorphic to a (perhaps non-proper) subcomplex of a regular triangulation of some point
  configuration $\mathcal  A\subset\R^d$.
\end{definition}

\begin{proposition}\label{prop:regular}
  For a pure abstract simplicial complex $\Gamma$ of dimension $d$ the following properties are equivalent: 
  (1) $\Gamma$ is regular.
  (2) $\Gamma$ is (isomorphic to) a proper subcomplex of the boundary complex of a simplicial $(d+1)$-polytope $P$.
\end{proposition}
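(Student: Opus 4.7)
\emph{For} $(2) \Rightarrow (1)$, I would invoke the theory of Schlegel diagrams. Since $\Gamma$ is a proper subcomplex of $\partial P$, there exists a facet $F$ of $P$ not in $\Gamma$. Project the remaining facets of $P$ centrally from a point just beyond $F$ onto the affine hull of $F$, producing the Schlegel diagram $\mathcal{S}(P,F)$. Because $P$ is simplicial, each projected facet is still a $d$-simplex, so $\mathcal{S}(P,F)$ is a triangulation of the $d$-polytope $F$. It is classical (see, e.g., Ziegler's \emph{Lectures on Polytopes}, Chapter 5) that Schlegel diagrams are regular triangulations, and the combinatorial bijection between the cells of $\partial P \setminus \{F\}$ and the cells of $\mathcal{S}(P,F)$ identifies $\Gamma$ with a subcomplex of this regular triangulation, giving (1).

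\emph{For} $(1) \Rightarrow (2)$, the natural construction is lifting. Pick a regular triangulation $T$ of some $\mathcal{A} = \{w_1,\ldots,w_n\} \subset \R^d$ containing $\Gamma$ as a subcomplex, together with a lifting $\nu : \mathcal{A} \to \R$ that strictly certifies $T$ (strictly convex across every ridge). Set $\tilde{\mathcal{A}} := \{(w_i, \nu(w_i))\}_i \subset \R^{d+1}$ and $P := \conv(\tilde{\mathcal{A}})$. By definition of regularity, the lower facets of $P$ (those whose outer normal has negative last coordinate) are exactly the lifts of the simplices of $T$, so the image of $\Gamma$ sits inside $\partial P$ as a subcomplex. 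Properness holds because, whenever $\nu$ is non-affine (equivalently whenever $T$ is nontrivial), the bounded $(d+1)$-polytope $P$ also has upper facets, none of which belongs to $\Gamma$; the degenerate case $|\mathcal{A}| = d+1$ is trivial and can be settled by taking $P$ to be a $(d+1)$-simplex and $\Gamma$ one of its facets.

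\emph{Main obstacle.} The key technical point is ensuring that $P$ is simplicial. Two kinds of non-simplicial facets can arise: (a) upper facets of $P$ may fail to be simplices when the heights are not in general position, and (b) $P$ may acquire \emph{vertical} facets over $(d-1)$-faces of $\conv(\mathcal{A})$ that contain more than $d$ points of $\mathcal{A}$. I would handle (a) by replacing $\nu$ with a small generic perturbation $\nu'$: strict certification of $T$ is an open condition on $\nu$, so small perturbations still certify $T$ and preserve the lower envelope (hence $\Gamma$), while genericity forces the upper envelope to become a triangulation. For (b), I would first perturb $\mathcal{A}$ itself by nudging inward any point that lies on a proper face of $\conv(\mathcal{A})$ without being a vertex of it; for sufficiently small displacements, the same openness argument preserves both the combinatorial type of $T$ and its regularity, and after the perturbation every $(d-1)$-face of $\conv(\mathcal{A})$ meets $\mathcal{A}$ in at most $d$ points, so no vertical facet survives. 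After these two perturbations, $P = \conv(\tilde{\mathcal{A}})$ is a simplicial $(d+1)$-polytope and the lifted image of $\Gamma$ realises the desired proper embedding into $\partial P$.
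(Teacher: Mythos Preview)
Your argument for $(2)\Rightarrow(1)$ via the Schlegel diagram is exactly what the paper does (and the paper in fact singles this out as the harder direction). For $(1)\Rightarrow(2)$ the paper gives no details beyond a reference to Gr\"unbaum--Shephard, so your lifting construction is the natural elaboration and is the right idea.

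There is, however, a genuine gap in your treatment of obstacle~(b). Your perturbation nudges inward only those points of $\mathcal A$ that lie on a proper face of $\conv(\mathcal A)$ without being vertices of $\conv(\mathcal A)$. This does not change $\conv(\mathcal A)$ at all. Hence your claim that afterwards ``every $(d-1)$-face of $\conv(\mathcal A)$ meets $\mathcal A$ in at most $d$ points'' is exactly the statement that $\conv(\mathcal A)$ is a simplicial $d$-polytope, which you have done nothing to ensure. If $\conv(\mathcal A)$ has a facet $G$ with more than $d+1$ vertices (say $\mathcal A$ is the vertex set of a pentagonal prism in $\R^3$), then for generic $\nu$ the lift of $G\cap\mathcal A$ is a $d$-dimensional vertical facet of $P$ with more than $d+1$ vertices, so $P$ is not simplicial and your argument does not conclude.

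The clean repair is to abandon the separate upper/vertical analysis and perturb at the level of $P$ itself. Form $P=\conv(\tilde{\mathcal A})$ as you do; its lower facets are the simplices of $T$, hence already simplicial. Now perturb the vertices of $P$ generically in $\R^{d+1}$. A simplex facet is defined by the open condition that the remaining vertices lie strictly to one side of the affine span of its $d+1$ vertices, so every simplex facet of $P$ (in particular every facet of $T$, hence every facet of $\Gamma$) survives the perturbation, while every non-simplex facet breaks into simplices. The perturbed polytope $P'$ is simplicial, $\Gamma$ sits in $\partial P'$, and properness is automatic since $P'$ has facets outside the lower envelope. This single perturbation replaces both your steps (a) and (b).
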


\begin{proof}
  This is a well-known fact, the proof of which appears e.g. in \cite[Section
  2.3]{GrunShep}. The main tool to show the backwards statement (which is the
  harder direction) is as follows: let $F$ be a facet of $P$ that does not belong to $\Gamma$ and
  let $o$ be a point outside $P$ but very close to the relative interior of $F$. Project $\Gamma$ towards
  $o$ into $F$ to obtain (part of) a regular triangulation of a $d$-dimensional configuration in the hyperplane containing $F$.  
  (This construction is usually called a Schlegel diagram of $P$ in $F$).
\end{proof}

For $\tau\in\Gamma$ a facet and $C$ a coefficient matrix
associated to the point configuration $\mathcal A$, we let $C_\tau$
denote the $d\times (d+1)$ submatrix of $C$ whose columns correspond to
the $d+1$ vertices in $\tau$.

\begin{definition}
  An $(n,d)$-complex $\Gamma$ is \emph{positively decorable}
if there is a $d\times n$ matrix $C$ that positively decorates every
facet of $\Gamma$. That is,
such that every submatrix $C_{\tau}$ corresponding to a facet
$\tau \in \Gamma$ is positively spanning.
\end{definition}

In this language, Theorem~\ref{thm:nbpossols} says that if  there is a regular and positively decorable $(n,d)$-complex of size $\ell$, then $\Xi_{d,n-d-1}\ge \ell$.

We now introduce a notion of complementarity for pure complexes. This notion is
closely related to matroid duality and, in fact, our result that regularity and
positive decorability are exchanged by complementarity is an expression of that
duality, via its geometric (and oriented) version: Gale duality.

\begin{definition}
  Let $\Gamma$ be an $(n,d)$-complex with facets $\{{\tau_1},\ldots,{\tau_{\lvert\Gamma\rvert}}\}$. We call \emph{complement complex of $\Gamma$} and denote
  $\overline{\Gamma}$ the $(n, n-d-2)$-complex with
  facets
  $\{\overline{\tau_1},\ldots,\overline{\tau_{\lvert\Gamma\rvert}}\}$,
  where $\overline{\tau_i} := [n]\setminus\tau_i$.
\end{definition}

\begin{lemma}\label{lem:galeposdecreg}
  An $(n,d)$-complex
$\Gamma$ is positively decorable
 if and only if its complement
$\overline{\Gamma}$ is
a subcomplex of the boundary complex of an $(n-d-1)$-polytope.
\end{lemma}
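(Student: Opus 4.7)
The plan is to use Gale duality to translate the positive-spanning condition on each submatrix $C_\tau$ (for $\tau\in\Gamma$) into the existence of a strict supporting hyperplane of a polytope $P\subset\R^{n-d-1}$ whose contact face has vertex set $\bar\tau$. Once the duality is packaged as a clean equivalence, both implications fall out of the same statement read in either direction.

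For the direction $(\Rightarrow)$, I first normalize $C$. Since positive decoration of any facet makes the columns of $C$ positively span $\R^d$, the argument in Proposition~\ref{P:invariant} (write each $-c_i$ as a non-negative combination of the other $c_j$'s and sum over $i$) produces a strictly positive linear dependence $\sum_i \lambda_i c_i = 0$. Rescaling the $i$-th column of $C$ by $\lambda_i>0$ preserves positive decoration of every facet and places $(1,\ldots,1)$ in $\ker C$. I then pick a basis of $\ker C\subset\R^n$ containing $(1,\ldots,1)$, arrange it as the rows of an $(n-d)\times n$ matrix $\hat P$ whose last row is $(1,\ldots,1)$, and write the columns as $\hat p_i=(p_i,1)$ with $p_i\in\R^{n-d-1}$. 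Set $P:=\conv(p_1,\ldots,p_n)$.

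The core duality is the following: for every $\tau\subset[n]$ of size $d+1$, the submatrix $C_\tau$ is positively spanning if and only if there exists an affine functional $\alpha(x)=\langle v,x\rangle+c$ on $\R^{n-d-1}$ vanishing on $\{p_j:j\in\bar\tau\}$ and strictly positive on $\{p_i:i\in\tau\}$. By Proposition~\ref{P:invariant}(4), $C_\tau$ is positively spanning iff $\ker C_\tau$ contains a strictly positive vector $\lambda$; extending $\lambda$ by $0$ on $\bar\tau$ places it in $\ker C$, so $\lambda=u\,\hat P$ for some $u\in\R^{n-d}$. Coordinate-wise this reads $\lambda_i=\langle u,\hat p_i\rangle$, and splitting $u=(v,c)\in\R^{n-d-1}\times\R$ produces the desired affine functional; the converse runs the same computation backwards. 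A standard Gale-dual rank identity additionally forces $\{p_j:j\in\bar\tau\}$ to be affinely independent whenever $\mathrm{rank}(C_\tau)=d$.

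Both implications of Lemma~\ref{lem:galeposdecreg} now follow. For $(\Rightarrow)$, applying the duality to each facet $\tau\in\Gamma$ exhibits $\conv\{p_j:j\in\bar\tau\}$ as a facet of $P$ of dimension $n-d-2$ with the $p_j$'s as its vertices; since $\hat P$ has rank $n-d$, $P$ itself has dimension $n-d-1$, and $\overline\Gamma$ embeds as a subcomplex of $\partial P$. For $(\Leftarrow)$, given $\overline\Gamma$ as a subcomplex of the boundary of an $(n-d-1)$-polytope $P$, I let $p_i$ be the vertex of $P$ identified with $i$ for each vertex $i$ of $\overline\Gamma$, and I pick a point of $\mathrm{int}(P)$ for any index belonging to every facet of $\Gamma$. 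Defining the rows of $C$ to be a basis of the $d$-dimensional space of linear relations among the vectors $\hat p_i=(p_i,1)$, the supporting hyperplane exposing each facet $\bar\tau$ of $P$ furnishes the affine functional required by the duality (strict positivity on $\{p_i:i\in\tau\}$ is automatic, since each such $p_i$ is either a vertex of $P$ outside the given facet or an interior point of $P$). The main technical obstacle is checking that ranks and dimensions line up correctly across the Gale correspondence and carefully handling the vertex identifications, especially with universal or isolated vertices of $\Gamma$; once this is arranged, the two directions are symmetric consequences of the duality above.
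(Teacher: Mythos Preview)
Your proof is correct and follows essentially the same Gale-duality approach as the paper: normalize $C$ so that $(1,\dots,1)\in\ker C$, build the Gale-dual point configuration, and use the equivalence between $C_\tau$ being positively spanning and $\bar\tau$ indexing a simplicial facet of the dual polytope. The only cosmetic differences are that you prove this equivalence directly via the affine-functional/kernel correspondence (the paper cites it from Gr\"unbaum) and you obtain the strictly positive kernel vector by summing one positive relation per vertex rather than one per facet as the paper does.
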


\begin{proof}
Recall that an (ordered) set of points $\mathcal A =\{w_1,\ldots,w_n\}\in \R^{n-d-1}$ and an (ordered) set of vectors $\{b_1,\ldots,b_n\} \subset \R^{d}$ are \emph{Gale transforms} of one another if the following $(n-d)\times n$ and $d\times n$ matrices have orthogonally complementary row-spaces (that is, if the kernel of one equals the row-space of the other):
\[
\widetilde{A}=
  \begin{pmatrix}
  1&\dots&1\\
  w_1&\dots&w_n
  \end{pmatrix},
\qquad
{C}=
  \begin{pmatrix}
  b_1&\dots&b_n
  \end{pmatrix}
  .
\]
Every set of points affinely spanning $\R^{n-d-1}$ has a Gale transform (construct $C$ by using as rows a basis for the kernel of $\widetilde{A}$) and every set of vectors with $\sum b_i=0$ has a Gale transform (extend the vector $(1,1,\dots,1)$ to a basis of the kernel of $C$).

By construction, Gale transforms have the property that a vector $\lambda\in \R^n$ is the vector of coefficients of a linear dependence of $\{b_1,\dots,b_n\}$ (kernel of $C$) if, and only if, it is the vector of values of some affine functional on $\{w_1,\dots,w_n\}$ (row space of $\widetilde A$). 
This implies the following (see, e.g.,~\cite[Theorem 1, p. 88]{Grun}): 
a subset $\tau\subset [n]$ of size $d+1$ indexes a positively spanning submatrix of $C$ if, and only if, the polytope $\conv(w_1,\dots,w_n)$ has a facet containing exactly the points $\{w_i: i\not\in\tau\}$ (by a dimensionality argument, these points must then be vertices and the facet be a $d$-simplex). 
Indeed, both things are equivalent to the existence of a unique (modulo scalar) nonnegative $\lambda$ with support equal to $\tau$ in the kernel of $C$ and row-space of $\widetilde A$.

Let now $\Gamma$ be an $(n,d)$-complex. 
If $\overline{\Gamma}$ is realized as a subcomplex of the boundary complex of an $(n-d-1)$-polytope $P \subset \R^{n-d-1}$, there is no loss of generality in assuming $P$ to be the convex hull of the vertices of $\overline{\Gamma}$.
Let $w_1,\dots,w_n$ be those vertices (together with $w_i$ arbitrarily chosen in the interior of $P$ if $i\in [n]$ happens not to
be used as a vertex in $\overline{\Gamma}$). Any matrix $C$ constructed as above positively decorates $\Gamma$.

Conversely, if a matrix $C$ positively decorates $\Gamma$ then there is a nonnegative vector $u_\tau$ in the kernel of $C$ with support $\tau$ for each facet $\tau$ of $\Gamma$. 
Thus, $\lambda = \sum_{\tau \in \Gamma} u_{\tau}$ is also in the kernel of $C$ and all its entries are strictly positive. 
Rescaling the columns of $C$ by the entries of $\lambda$ we get a new matrix that has $(1,\dots,1)$ in the kernel and still positively decorates $\Gamma$. 
The Gale transform described above can then be applied, and results in a set of points whose convex hull $P$ contains $\overline{\Gamma}$ as a subcomplex.
\end{proof}

\begin{corollary}
\label{coro:galeposdecreg}
Let $\Gamma$ be an $(n,d)$-complex. Then:
\begin{enumerate}
\item if $\overline{\Gamma}$ is regular then $\Gamma$ is positively decorable.
\item if $\Gamma$ is positively decorable then $\overline{\Gamma}$ is either regular or the boundary complex of a simplicial polytope. If the latter happens then  $\overline{\Gamma}$ minus a facet is regular.
\end{enumerate}
\end{corollary}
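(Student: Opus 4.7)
The plan is to combine Lemma~\ref{lem:galeposdecreg}---which characterizes positive decorability of $\Gamma$ as $\overline{\Gamma}$ being a subcomplex of the boundary of some $(n-d-1)$-polytope---with Proposition~\ref{prop:regular}---which characterizes regularity of $\overline{\Gamma}$ as being a \emph{proper} subcomplex of the boundary of a \emph{simplicial} $(n-d-1)$-polytope. The dichotomy in part~(2) then arises from reconciling the stronger hypothesis of Proposition~\ref{prop:regular} with the weaker conclusion of Lemma~\ref{lem:galeposdecreg}.

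For part~(1), if $\overline{\Gamma}$ is regular I would invoke Proposition~\ref{prop:regular} to realize it as a proper subcomplex of $\partial P$ for some simplicial $(n-d-1)$-polytope $P$. Since a proper subcomplex is in particular a subcomplex, Lemma~\ref{lem:galeposdecreg} immediately yields that $\Gamma$ is positively decorable.

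For part~(2), Lemma~\ref{lem:galeposdecreg} supplies an $(n-d-1)$-polytope $P$ with $\overline{\Gamma}\subseteq\partial P$. Because every facet of $\overline{\Gamma}$ is an $(n-d-2)$-simplex sitting inside $\partial P$, a dimension count forces it to be an entire facet of $P$, and in particular a simplicial one. I would then split into the two cases mirroring the statement. If $\overline{\Gamma}=\partial P$, all facets of $P$ are simplices, so $P$ is simplicial and we land in the second alternative; for the final clause, removing any facet from $\partial P$ yields a proper subcomplex of the boundary of a simplicial polytope, which is regular by Proposition~\ref{prop:regular}. If instead $\overline{\Gamma}\subsetneq\partial P$, I would pass---via a small generic perturbation of the vertices of $P$ if $P$ is not already simplicial---to a simplicial polytope $P'$ still containing $\overline{\Gamma}$ as a proper subcomplex of $\partial P'$. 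Such a perturbation preserves the simplicial facets of $P$ (including those of $\overline{\Gamma}$) while splitting non-simplicial ones into simplices, so $\overline{\Gamma}$ indeed remains a proper subcomplex of $\partial P'$, and Proposition~\ref{prop:regular} concludes that $\overline{\Gamma}$ is regular.

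The only non-formal step, and where I expect the main care will be needed, is the perturbation argument in part~(2). It rests on the standard fact from convex polytope theory that simplicial facets are combinatorially stable under small generic vertex perturbations, while non-simplicial facets generically break into simplicial ones; checking that this procedure preserves $\overline{\Gamma}$ as a subcomplex of $\partial P'$ is routine but worth spelling out.
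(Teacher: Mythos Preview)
Your proposal is correct and follows precisely the route the paper has in mind. The paper states the corollary without proof, treating it as an immediate consequence of Lemma~\ref{lem:galeposdecreg}; your write-up simply makes explicit the bridge to Proposition~\ref{prop:regular}, including the standard perturbation needed when the polytope $P$ supplied by the lemma is not already simplicial.
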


The following examples illustrate the need to perhaps remove a facet in part (2) of the corollary. A regular and positively decorable complex may not have a regular complement:

\begin{example}
\label{example:cross-polytope}
Let $\Gamma$ be the $(2^d,d)$-complex formed by the boundary of a cross-polytope of dimension $d+1$.
Observe that $\Gamma=\overline{\Gamma}$ which, by Lemma~\ref{lem:galeposdecreg}, implies 
$\Gamma$ is positively decorable. Yet, $\overline{\Gamma}$ is not regular since it is not a
\emph{proper} subcomplex of the boundary of a $(d+1)$-polytope.
\end{example}

\begin{example}
\label{example:6cyclic}
The complex $\Gamma=\{ \{1,2,3,4\}, \{2,3,4,5\}, \{3,4,5,6\},$ $\{1,4,5,6\}, \{1,2,5,6\},$ $\{1,2,3,6\}\}$ is a proper subcomplex of the boundary of a cyclic $4$-polytope. Its complement is a cycle of length six, so  $\Gamma$ is regular and positively decorable but $\overline \Gamma$ is not regular. 
\end{example}

The following is the main consequence of Lemma~\ref{lem:galeposdecreg}.

\begin{theorem}\label{thm:boundXi}
  Let $\Gamma$  be an $(n,d)$-complex.
\begin{enumerate}
\item If $\Gamma$ is regular and positively decorable, then $\Xi_{d,n-d-1}\geq \lvert\Gamma\rvert$  and $\Xi_{n-d-2,
  d+1}\geq \lvert\Gamma\rvert-1$.
\item If both $\Gamma$ and $\overline{\Gamma}$ are
  regular, then $\Xi_{d,n-d-1}\geq \lvert\Gamma\rvert$ and $\Xi_{n-d-2,
  d+1}\geq \lvert\Gamma\rvert$.
  \end{enumerate}
\end{theorem}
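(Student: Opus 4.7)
The plan is to derive all four inequalities from Theorem \ref{thm:nbpossols}, which says that any regular positively decorable $(n,d)$-complex of size $\ell$ witnesses the bound $\Xi_{d,n-d-1}\ge \ell$. The two bounds in part (1) (resp.\ part (2)) correspond to applying this theorem to $\Gamma$ and to $\overline{\Gamma}$ (which is an $(n,n-d-2)$-complex and hence a potential witness for $\Xi_{n-d-2,d+1}$). The nontrivial content is therefore to verify that $\overline{\Gamma}$ is both regular and positively decorable under the respective hypotheses, up to possibly discarding one facet.

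First I will dispose of the ``direct'' bound $\Xi_{d,n-d-1}\ge |\Gamma|$. In both parts, $\Gamma$ itself is regular and positively decorable (in part (2), positive decorability of $\Gamma$ follows from regularity of $\overline{\Gamma}$ via Corollary \ref{coro:galeposdecreg}(1)), so Theorem \ref{thm:nbpossols} applied to $\Gamma$ yields this inequality directly.

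Next I will handle the ``dual'' bound. I first show that under either hypothesis, $\overline{\Gamma}$ is positively decorable. By Lemma \ref{lem:galeposdecreg} applied to $\overline{\Gamma}$, positive decorability of $\overline{\Gamma}$ is equivalent to $\overline{\overline{\Gamma}}=\Gamma$ being a subcomplex of the boundary of a simplicial $(d+1)$-polytope; this is exactly what Proposition \ref{prop:regular} gives us from the regularity of $\Gamma$. Now I examine regularity of $\overline{\Gamma}$: in part (2) it is assumed, and Theorem \ref{thm:nbpossols} gives $\Xi_{n-d-2,d+1}\ge |\overline{\Gamma}|=|\Gamma|$, finishing that case. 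In part (1), Corollary \ref{coro:galeposdecreg}(2) applied to the positively decorable complex $\Gamma$ gives two possibilities for $\overline{\Gamma}$: either it is regular (and we again get the bound $|\Gamma|$, better than claimed), or it is the entire boundary complex of a simplicial polytope, in which case removing any single facet leaves a proper subcomplex of that boundary, which is regular by Proposition \ref{prop:regular} and remains positively decorable because positive decorability is obviously inherited by subcomplexes (any decorating matrix $C$ still works). Applying Theorem \ref{thm:nbpossols} to this truncated complex yields $\Xi_{n-d-2,d+1}\ge |\Gamma|-1$.

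The only subtle point, and where careful bookkeeping is needed, is the asymmetry between Proposition \ref{prop:regular} (which demands a \emph{proper} subcomplex of a polytope boundary) and Lemma \ref{lem:galeposdecreg} (which imposes no such properness). This gap is precisely what forces the ``$-1$'' in part (1) when $\overline{\Gamma}$ happens to be the full boundary of a simplicial polytope, and conversely explains why part (2) recovers $|\Gamma|$ without a loss, since regularity of $\overline{\Gamma}$ is then hypothesized outright. No further calculations are required; the proof is a bookkeeping argument assembling Theorem \ref{thm:nbpossols}, Proposition \ref{prop:regular}, Lemma \ref{lem:galeposdecreg}, and Corollary \ref{coro:galeposdecreg}.
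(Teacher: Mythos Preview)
Your proof is correct and follows essentially the same route as the paper's: the paper's proof simply says that $\Xi_{d,n-d-1}\ge |\Gamma|$ is a rephrasing of Theorem~\ref{thm:nbpossols} and that ``the rest follows from Lemma~\ref{lem:galeposdecreg},'' which is precisely what you have unpacked using Proposition~\ref{prop:regular} and Corollary~\ref{coro:galeposdecreg}. Your observation that the ``$-1$'' in part~(1) arises exactly from the gap between Lemma~\ref{lem:galeposdecreg} (subcomplex) and Proposition~\ref{prop:regular} (\emph{proper} subcomplex) is spot-on and makes the structure of the argument clearer than the paper's terse version.
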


\begin{proof}
The fact that if $\Gamma$ is regular and positively decorable then $\Xi_{d,n-d-1}\geq \lvert\Gamma\rvert$ is merely a rephrasing of Theorem~\ref{thm:nbpossols}. 
The rest  follows  from Lemma~\ref{lem:galeposdecreg}.
\end{proof}

\begin{example}
\label{exm:k=1}
The inequality $\Xi_{1,k}\ge k+1$ from Proposition~\ref{prop:superadditive} is a special case of Theorem~\ref{thm:boundXi}, 
since a path with $k+1$ edges is regular and positively decorable (the decorating matrix alternates $1$'s and $-1$'s).
\end{example}

\section{Relation to bipartite and balanced complexes}
\label{sec:bipartite}

In this section we relate regularity and positive decorability to the following
two familiar notions for pure simplicial complexes:

\begin{definition}
The \emph{adjacency graph} of a pure simplicial complex $\Gamma$ of dimension
$d$ is the graph 
whose vertices are the facets of $\Gamma$, with two
facets 
adjacent if they share $d$ vertices. We say $\Gamma$ is \emph{bipartite}
if
its adjacency graph is bipartite.
\end{definition}

\begin{definition}\cite[Section III.4]{Sta3} A $(d+1)$-coloring of an  $(n,d)$-complex $\Gamma$
  is a map $\gamma:[n]\rightarrow [d+1]$ 
  such that $\gamma(w_1)\neq \gamma(w_2)$ for every
  edge $\{w_1,w_2\}$ of $\Gamma$. 
 If such a coloring exists, $\Gamma$ is called \emph{balanced}.  
\end{definition}

Observe that two complement complexes $\Gamma$ and $\overline{\Gamma}$ have the
same adjacency graph. Thus, if one is bipartite, then so is the other. The same
is not true for balancedness: A cycle of length six is balanced but its complement (the complex $\Gamma$ of
Example~\ref{example:6cyclic}) is not. For instance, the simplices
$\{1,2,3,4\}$ and $\{2,3,4,5\}$ are adjacent, which implies that $1$ and $5$ should get the same color. But this does not work since $\{1,5\}$ is an edge.

Colorings are sometimes called
  \emph{foldings} since they can be extended to a map from $\Gamma$ to the $d$-dimensional
  standard simplex which is linear and bijective on each facet of $\Gamma$.
Similarly, balanced triangulations are sometimes called \emph{foldable}
triangulations, see e.g. \cite{JZ}.

It is easy to show that \emph{orientable} balanced complexes are bipartite. 
(For non-orientable ones the same is not true, as shown by the $(9,2)$-complex $\{123,$ $234,$ $345,$ $456,$ $567$, $678,$ $789,189,129\}$).
We here show that being positively decorable is an intermediate property.
 
Recall that an \emph{orientation} of an abstract $d$-simplex 
$\tau=\{w_{1},\ldots,w_{d+1}\}$ is a choice of
calling ``positive'' one of the two classes, modulo
even permutations, of orderings of its vertices and ``negative'' the other class. 
For example, every embedding $\varphi:\tau\rightarrow \mathbb R^d$ 
of $\tau$ into $d+1$ points not lying in an affine hyperplane induces
a canonical orientation of $\tau$, by calling an ordering $w_{\sigma_1}, \ldots, w_{\sigma_{d+1}}$ positive or negative
according to the sign of the determinant 
\[
\left|\begin{matrix}
\varphi(w_{\sigma_1}) & \ldots & \varphi(w_{\sigma_{d+1})}\\
1&\ldots & 1
\end{matrix}\right|.
\]

If $\tau$ and $\tau'$ are two $d$-simplices with $d$ common vertices, then respective orientations 
of them are called \emph{consistent} (along their common $(d-1)$-face) if replacing in a positive ordering
of $\tau$ the vertex of $\tau\setminus \tau'$ by the vertex of $\tau'\setminus
\tau$ results in a negative ordering of $\tau'$. A pure
simplicial complex is called \emph{orientable} if one can orient all facets in a manner that makes orientations of all neighboring pairs of them consistent.
In particular, every geometric simplicial complex is orientable, since its embedding in $\R^d$ induces consistent orientations.

Observe that if we decorate a (geometric or abstract) $d$-complex $\Gamma$ on $n$ vertices with a $d\times n$ matrix $C$ as we have been doing in the previous sections then each facet inherits a canonical orientation from $C$. When $C$ positively decorates $\Gamma$ these orientations are ``as inconsistent as can be'':

\begin{proposition}\label{prop:simplicialcomploriented}
Let $(\Gamma, C)$ be a positively decorated
pure simplicial complex. Then, the canonical orientations given by $C$ to the facets of \,$\Gamma$ are inconsistent along every common face of two neighboring facets.
In particular, if \,$\Gamma$ is orientable (e.g., if \,$\Gamma$ can be
geometrically embedded in $\R^{\dim(\Gamma)}$) and positively decorable, then its adjacency graph is bipartite.
\end{proposition}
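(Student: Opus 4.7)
The statement has two parts. The first, which is the main content, requires comparing the canonical orientations of two adjacent facets via a sign calculation on determinants; the second is a formal bipartite-coloring argument that follows from the first.

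For the first part, I would fix two adjacent facets $\tau = \{v_1, \ldots, v_d, v_{d+1}\}$ and $\tau' = \{v_1, \ldots, v_d, v_{d+2}\}$ sharing the face $\{v_1, \ldots, v_d\}$, and compare the $(d+1)\times(d+1)$ determinants
\[
D_\tau := \det \begin{pmatrix} C_{v_1} & \cdots & C_{v_d} & C_{v_{d+1}} \\ 1 & \cdots & 1 & 1 \end{pmatrix}, \qquad
D_{\tau'} := \det \begin{pmatrix} C_{v_1} & \cdots & C_{v_d} & C_{v_{d+2}} \\ 1 & \cdots & 1 & 1 \end{pmatrix}
\]
that determine the canonical $C$-orientations of $\tau$ and $\tau'$ on the displayed orderings. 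Since those orderings differ by exactly the replacement used in the paper's definition of consistency, the two canonical orientations are inconsistent along the common face precisely when $\operatorname{sgn}(D_\tau) = \operatorname{sgn}(D_{\tau'}) \neq 0$, rather than the opposite.

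The comparison rests on Proposition~\ref{P:invariant}(4): positive spanning of $C_\tau$ yields a strictly positive kernel vector $\lambda = (\lambda_1, \ldots, \lambda_{d+1})$. Using the relation $\sum \lambda_i C_{v_i} = 0$ to rewrite the last column of the matrix defining $D_\tau$ as a combination of the first $d$ columns plus a multiple of $(0, \ldots, 0, 1)^T$, then invoking multilinearity (repeated columns vanish) and cofactor expansion along the last column, I obtain
\[
D_\tau = \frac{\sum_{i=1}^{d+1} \lambda_i}{\lambda_{d+1}} \, \det A, \qquad A := (C_{v_1}, \ldots, C_{v_d}),
\]
where the prefactor is manifestly positive and $\det A \neq 0$ because Cramer's rule expresses the nonzero $\lambda_{d+1}$ as a common scalar times $\pm \det A$. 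Running exactly the same argument on $\tau'$ with its own positive kernel vector $\mu$ gives $D_{\tau'} = \bigl(\sum_j \mu_j / \mu_{d+2}\bigr)\det A$ with the \emph{same} matrix $A$ (the one indexed by the common face), so $\operatorname{sgn}(D_\tau) = \operatorname{sgn}(D_{\tau'}) = \operatorname{sgn}(\det A)$, as required.

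For the second part, assume $\Gamma$ is orientable, fix a globally consistent orientation $O$ of its facets, and define $\epsilon(\tau) = +1$ if $O(\tau)$ coincides with the canonical orientation $O_C$ induced by $C$, and $\epsilon(\tau) = -1$ otherwise. For any adjacent pair $\tau, \tau'$, a one-line case analysis shows that $\epsilon(\tau) = \epsilon(\tau')$ would mean $O$ and $O_C$ differ by the \emph{same} flip on both simplices (or no flip at all), which preserves consistency and so propagates the consistency of $(O(\tau), O(\tau'))$ into consistency of $(O_C(\tau), O_C(\tau'))$, contradicting the first part. Hence $\epsilon$ is a proper $2$-coloring of the adjacency graph, proving bipartiteness. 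The main obstacle is the sign bookkeeping in the determinant computation: one must verify that the positive scalar $\sum \lambda_i / \lambda_{d+1}$ genuinely emerges from the column operation and, crucially, that the same $\det A$ is produced on both sides (which is what forces the signs to agree rather than disagree, the latter being what one would expect for a generic geometric embedding).
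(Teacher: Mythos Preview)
Your proof is correct and follows essentially the same strategy as the paper: both arguments show that the sign of the augmented determinant $D_\tau$ equals the sign of the common $d\times d$ minor $\det A = \det(C_{v_1},\ldots,C_{v_d})$, which is shared by the two adjacent facets, and then deduce bipartiteness by comparing the $C$-orientation to a globally consistent one. The only difference is cosmetic: the paper reaches $\operatorname{sgn}(D_\tau)=\operatorname{sgn}(\det A)$ by expanding $D_\tau$ along the bottom row of ones and invoking the definition of positively spanning directly (all signed maximal minors of $C_\tau$ share a sign, and one of them \emph{is} $\pm\det A$), whereas you reach the same conclusion via the positive kernel vector and a column operation; both routes are short and equivalent.
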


\begin{proof} 
We need to check that the submatrices of $C$ corresponding to two adjacent
facets $\tau$ and $\tau'$, extended with a row of ones, have determinants of the same sign. Without loss of generality assume the matrices (without the row of ones) to be
\[
M_\tau = \left(\begin{matrix}
c_{1} & \ldots & c_d & c_{d+1}\\
\end{matrix}\right)
\quad \text{ and } \quad
M_{\tau'} =\left(\begin{matrix}
c_{1} & \ldots & c_d & c'_{d+1}\\
\end{matrix}\right).
\]
Since $C$ positively decorates $\tau$, and $\tau'$, and since 
${\minor}(M_\tau,d+1) = {\minor}(M_{\tau'},d+1) = 
\left|\begin{matrix}
c_{1} & \ldots & c_d\\
\end{matrix}\right|
$, 
we get that all the signed minors $(-1)^{i}{\minor}(M_\tau,i)$ and $(-1)^{i}{\minor}(M_{\tau'},i)$ have one and the same sign. In particular, the determinants
\[
\left|\begin{matrix}
c_{1} & \ldots & c_d & c_{d+1}\\
1&\ldots & 1 & 1
\end{matrix}\right|
\quad \text{ and } \quad
\left|\begin{matrix}
c_{1} & \ldots & c_d & c'_{d+1}\\
1&\ldots & 1 & 1
\end{matrix}\right|
\]
have the same sign, so the orientations given to $\tau$ and $\tau'$ by $C$ are inconsistent.

The last assertion is obvious: The positive decoration gives us orientations for the facets that alternate along the adjacency graph, while orientability gives us one that is preserved along the adjacency graph. This can only happen if every cycle in the graph has even length, that is, if the graph is bipartite.
\end{proof}

\begin{proposition}\label{prop:orientcoloring} Let $e_i$ be the $i$-th
  canonical basis vector of $\R^d$ and $e_{d+1}=(-1,\ldots, -1)$.  Let $\Gamma$ be a balanced $(n,d)$-complex 
  with $(d+1)$-coloring $\gamma:[n]\rightarrow [d+1]$. Then the matrix $C$ with column vectors $e_{\gamma(1)}, \ldots, e_{\gamma(n)}$ in this order
 positively decorates $\Gamma$.
\end{proposition}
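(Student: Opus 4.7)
The plan is to show that for every facet $\tau$ of $\Gamma$, the $d\times(d+1)$ submatrix $C_\tau$ is positively spanning, which by definition means $C$ positively decorates $\Gamma$. The strategy is to use the coloring hypothesis to reduce $C_\tau$ to a fixed canonical matrix (up to column permutation), and then verify the positive spanning property for that one matrix using Proposition~\ref{P:invariant}.

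First, I would fix an arbitrary facet $\tau=\{w_{i_1},\ldots,w_{i_{d+1}}\}$ of $\Gamma$. Since every pair of vertices inside a facet spans an edge of $\Gamma$, the coloring condition forces the colors $\gamma(w_{i_1}),\ldots,\gamma(w_{i_{d+1}})$ to be pairwise distinct. Because there are exactly $d+1$ colors available, $\gamma$ restricted to the vertices of $\tau$ is a bijection onto $[d+1]$. Consequently, the columns of $C_\tau$ are exactly $e_1,\ldots,e_{d+1}$, possibly in some permuted order.

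Next, I would verify that the reference matrix $M=[e_1\ e_2\ \cdots\ e_d\ e_{d+1}]$ is positively spanning. This is a direct calculation: by construction $e_1+\cdots+e_d=(1,\ldots,1)=-e_{d+1}$, so $e_1+e_2+\cdots+e_{d+1}=0$, and hence the all-ones vector $(1,1,\ldots,1)$ lies in the kernel of $M$. Since all its coordinates are nonzero and of the same sign, condition (4) in Proposition~\ref{P:invariant} is fulfilled, so $M$ is positively spanning.

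Finally, since $C_\tau$ is obtained from $M$ by permuting columns, condition (3) in Proposition~\ref{P:invariant} implies that $C_\tau$ is also positively spanning. As $\tau$ was arbitrary, $C$ positively decorates every facet of $\Gamma$. There is no real obstacle here: the entire argument is a short combinatorial reduction followed by one kernel inspection, so the plan is essentially complete as stated.
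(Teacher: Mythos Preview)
Your proof is correct and follows exactly the same approach as the paper's: show that each $C_\tau$ is a column permutation of the fixed matrix $M=[e_1\ \cdots\ e_{d+1}]$, check that $M$ is positively spanning, and conclude via Proposition~\ref{P:invariant}. You merely spell out the details (the bijectivity of $\gamma$ on a facet and the kernel computation for $M$) that the paper leaves implicit.
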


\begin{proof} By construction, every $d\times(d+1)$ submatrix of $C$
  corresponding to a facet of $\Gamma$ is a column permutation of the
  $d\times(d+1)$ matrix with column vectors $e_1,\ldots, e_{d+1}$ in this order.
  This latter matrix is positively spanning, so the statement follows from
  Proposition~\ref{P:invariant}.  \end{proof}

Propositions~\ref{prop:simplicialcomploriented} and~\ref{prop:orientcoloring} imply:

\begin{theorem}
\label{thm:balanced_vs_bipartite}
For orientable pure complexes (in particular, for geometric $d$-complexes in $\R^d$) one has
\[
\text{balanced}\,\Longrightarrow\,
\text{positively decorable}\,\Longrightarrow\,
\text{bipartite}.
\]
\end{theorem}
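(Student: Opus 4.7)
The plan is to deduce both implications as immediate consequences of the two propositions that were just established, since the theorem is essentially a repackaging of Propositions~\ref{prop:simplicialcomploriented} and~\ref{prop:orientcoloring}. No orientability hypothesis is actually needed for the first implication; it is only the second one that uses it.

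For the implication \emph{balanced} $\Rightarrow$ \emph{positively decorable}, I would just invoke Proposition~\ref{prop:orientcoloring} directly: given a $(d+1)$-coloring $\gamma:[n]\to[d+1]$, the matrix $C$ whose $i$-th column is the standard basis vector $e_{\gamma(i)}$ (with $e_{d+1}=(-1,\ldots,-1)$) has the property that for every facet $\tau$ of $\Gamma$ the submatrix $C_\tau$ is a column permutation of $[e_1|\cdots|e_{d+1}]$, and that latter matrix is positively spanning by an immediate kernel computation (the all-ones vector is in its kernel). By Proposition~\ref{P:invariant}, positive spanning is invariant under column permutation, so $C$ positively decorates every facet of $\Gamma$, hence $\Gamma$ is positively decorable.

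For the implication \emph{positively decorable} $\Rightarrow$ \emph{bipartite} under the orientability hypothesis, I would invoke Proposition~\ref{prop:simplicialcomploriented}. Let $C$ be a positively decorating matrix for $\Gamma$. The proposition provides \emph{canonical} orientations, one per facet, with the property that any two facets adjacent across a common $(d-1)$-face receive \emph{inconsistent} orientations. On the other hand, since $\Gamma$ is orientable by hypothesis, there exists a global orientation of the facets that is \emph{consistent} across every common $(d-1)$-face. Comparing these two orientations facet by facet defines a $\{+,-\}$-labeling of the vertices of the adjacency graph that must flip across every edge. This labeling is a proper $2$-coloring of the adjacency graph, so the graph is bipartite.

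I do not expect any real obstacle: both directions are two-line deductions from the previously proved propositions once the hypotheses are unpacked correctly. The only subtle point worth flagging is that the orientability assumption is genuinely used in the second implication (the non-orientable example given before the theorem, the $9$-vertex Möbius-band-like triangulation, shows that positive decorability alone does not force bipartiteness), while the first implication is unconditional.
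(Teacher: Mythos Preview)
Your proposal is correct and matches the paper's own proof exactly: the theorem is stated immediately after Propositions~\ref{prop:simplicialcomploriented} and~\ref{prop:orientcoloring} with the one-line justification that those two propositions imply it. Your additional remarks (that orientability is only needed for the second implication, and that the $9$-vertex example witnesses its necessity) are accurate and consistent with the paper's presentation.
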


None of the  reverse implications is true, as the following two examples respectively show.

\begin{example}
\label{exm:noteq}
The $(7,2)$-complex of Figure~\ref{fig:noteq} has a bipartite adjacency
graph but is not balanced. The right hand side of the figure describes a positive
decoration of the simplex. Therefore, positively decorable simplicial complexes are
not necessarily balanced.
\end{example}

\begin{figure}
  \centering
\begin{tikzpicture}[mynode/.style={circle, inner sep=1.5pt, fill=red}, scale =
  1]
  \node[draw,mynode, label=left:{$1$}] (A1) at (0,0) {};
  \node[draw,mynode, label=left:{$2$}] (A2) at (0,2) {};
  \node[draw,mynode, label=above:{$3$}] (A3) at (1,1) {};
  \node[draw,mynode, label=above:{$4$}] (A4) at (2,0) {};
  \node[draw,mynode, label=right:{$5$}] (A5) at (3,2) {};
  \node[draw,mynode, label=right:{$6$}] (A6) at (3,-1) {};
  \node[draw,mynode, label=left:{$7$}] (A7) at (1,-1) {};
  \fill[opacity=0.3, blue] (A1.center) -- (A2.center) -- (A3.center) -- cycle; 
  \fill[opacity=0.3, red] (A2.center) -- (A3.center) -- (A5.center) -- cycle; 
  \fill[opacity=0.3, blue] (A3.center) -- (A4.center) -- (A5.center) -- cycle; 
  \fill[opacity=0.3, red] (A4.center) -- (A6.center) -- (A5.center) -- cycle; 
  \fill[opacity=0.3, blue] (A4.center) -- (A6.center) -- (A7.center) -- cycle; 
  \fill[opacity=0.3, red] (A1.center) -- (A4.center) -- (A7.center) -- cycle; 
  \draw[black] (A1) -- (A2) -- (A5) -- (A6) --
  (A7) -- (A1) -- (A4) -- (A3) -- (A1); 
  \draw[black] (A2) -- (A3) -- (A5);
  \draw[black] (A3) -- (A4);
  \draw[black] (A4) -- (A5);
  \draw[black] (A7) -- (A4) -- (A6);
  \node[draw,mynode] (B0) at (8,.5) {};
  \node (B1) at (7.5,2) {1=6};
  \node (B2) at (9.8,0) {2=7};
  \node (B3) at (6.5,0) {3};
  \node (B4) at (7.5,-1) {4};
  \node (B5) at (9.2,1.7) {5};
  \draw[black,->] (B0) -- (B1);
  \draw[black,->] (B0) -- (B2);
  \draw[black,->] (B0) -- (B3);
  \draw[black,->] (B0) -- (B4);
  \draw[black,->] (B0) -- (B5);
\end{tikzpicture}
\caption{A two-dimensional simplicial complex whose adjacency graph is bipartite (left) and which is positively decorable (right) but not balanced. The white triangle $134$ is not part of the  complex.\label{fig:noteq}}
\end{figure}
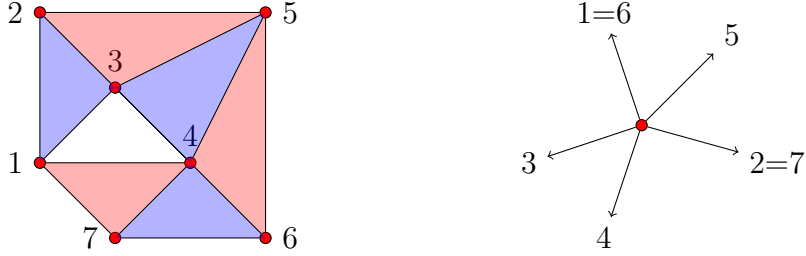

\begin{example}
\label{exm:counterexample}
Let $\Gamma$ be a graph consisting of two disjoint cycles of length four and let $\overline \Gamma$ be its complement, which is a $(8,5)$-complex. 
The adjacency graph of $\Gamma$, hence that of $\overline \Gamma$, is bipartite, again consisting of two cycles of length four. On the other hand, 
since $\Gamma$ is positively decorable but not part of the boundary of a convex polygon, Lemma~\ref{lem:galeposdecreg} tells us that 
$\overline \Gamma$ is regular but not positively decorable (remark that $\overline \Gamma$ cannot be the whole boundary of a simplicial $6$-polytope since for that its adjacency graph would need to have degree six at every vertex).
\end{example}

However, the relationship between balancedness and bipartiteness can be made an equivalence under certain additional hypotheses.
A pure simplicial complex $\Gamma$ is called \emph{locally strongly connected} if 
the adjacency graph of the star of any face is connected.
Locally strongly connected complexes are sometimes called \emph{normal} and they
include, for example, all triangulated manifolds, with or without boundary.
See, e.g., the paragraph after Theorem~A in~\cite{LMS} for more information on them. By results of Joswig \cite[Proposition 6]{J} and \cite[Corollary 11]{J}, a locally
strongly connected and simply connected complex $\Gamma$ on a finite set $\mathcal
A$ is balanced if and only if its adjacency graph is bipartite, see also
\cite[Theorem 5]{Izmes}. In particular, we have:

\begin{corollary}
\label{coro:balanced_vs_bipartite_2}
For simply connected triangulated manifolds (in particular, for triangulations of point configurations) one has
\[
\text{balanced}\,\Longleftrightarrow\,
\text{positively decorable}\,\Longleftrightarrow\,
\text{bipartite}.
\]
\end{corollary}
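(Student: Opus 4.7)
The plan is to combine Theorem~\ref{thm:balanced_vs_bipartite} with the Joswig results quoted immediately before the corollary. The two forward implications
\[
\text{balanced}\,\Longrightarrow\,\text{positively decorable}\,\Longrightarrow\,\text{bipartite}
\]
are already given by Theorem~\ref{thm:balanced_vs_bipartite}, provided the complex is orientable. I would therefore first observe that a simply connected triangulated manifold (with or without boundary) is orientable: this is a classical topological fact (vanishing of the first Stiefel--Whitney class on a simply connected manifold), and combinatorially it amounts to choosing consistent orientations of all $d$-simplices. In particular, any triangulation of a point configuration is a triangulation of a convex polytope, hence of a ball, which is both simply connected and orientable, so the stated ``in particular'' parenthetical is covered.

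For the remaining implication, bipartite $\Longrightarrow$ balanced, I would invoke Joswig's results \cite[Proposition~6, Corollary~11]{J} (see also \cite[Theorem~5]{Izmes}), which state that a locally strongly connected and simply connected pure complex is balanced if and only if its adjacency graph is bipartite. To apply this I have to verify local strong connectivity of a triangulated manifold $\Gamma$: for any face $\sigma\in\Gamma$, the link of $\sigma$ is a triangulated sphere or ball, in particular a pure connected complex, so the adjacency graph of $\mathrm{star}(\sigma)$ is connected. This is exactly local strong connectivity, so Joswig's theorem applies to $\Gamma$.

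Chaining the three implications, bipartite $\Longrightarrow$ balanced $\Longrightarrow$ positively decorable $\Longrightarrow$ bipartite, yields the desired equivalences. The only subtle point, and hence the main obstacle, is bookkeeping of hypotheses: one must check that the single assumption ``simply connected triangulated manifold'' simultaneously supplies orientability (needed to invoke Theorem~\ref{thm:balanced_vs_bipartite}) and local strong connectivity (needed to invoke Joswig). Once this verification is made, no further computation is required, and the equivalences follow by transitivity.
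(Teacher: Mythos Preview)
Your proposal is correct and follows exactly the route the paper intends: the corollary is stated immediately after the paragraph recalling Joswig's theorem, with no separate proof, because it is meant to be read as the combination of Theorem~\ref{thm:balanced_vs_bipartite} (forward implications, using orientability) with Joswig's bipartite\,$\Leftrightarrow$\,balanced for locally strongly connected, simply connected complexes. Your write-up simply makes explicit the two hypothesis checks (orientability and local strong connectivity) that the paper leaves implicit.
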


We close this section by illustrating two concrete applications of Theorem~\ref{thm:balanced_vs_bipartite}.

\begin{corollary} 
\label{coro:manypositive} 
Assume that a finite full-dimensional
  point configuration $\mathcal A$ in $\Z^d$
admits a regular triangulation and let 
 $\Gamma$ be a balanced simplicial subcomplex of this triangulation. Let $\nu: \mathcal A \rightarrow \R$ be a function certifying the regularity of
the triangulation and let $\gamma:\Vertices(\Gamma)\rightarrow [d+1]$ be a
  $(d+1)$-coloring of $\Gamma$.
Then for $t>0$ sufficiently small, the number of positive solutions of the Viro polynomial system
\begin{equation}\label{Virosystem}
\sum_{w \in {\rm Vert}(\Gamma)}
t^{\nu(w)}
e_{\gamma(w)}X^{w}=0
\end{equation}
is not smaller than the number of facets of $\Gamma$. \end{corollary}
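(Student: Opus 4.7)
The plan is to read the corollary as a direct combination of Proposition~\ref{prop:orientcoloring} (which turns a $(d+1)$-coloring into a positively decorating coefficient matrix) with Theorem~\ref{thm:nbpossols} (the Viro lower bound on positive solutions).

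As a first step, I would assemble the $d \times n$ coefficient matrix $C$ needed by Theorem~\ref{thm:nbpossols}: set its column at each $w \in \Vertices(\Gamma)$ equal to $e_{\gamma(w)}$, where $e_1,\dots,e_d$ are the canonical basis vectors of $\R^d$ and $e_{d+1} = (-1,\dots,-1)$, and set the column at any point of $\mathcal A$ not used as a vertex of $\Gamma$ (if such points exist) to zero. By Proposition~\ref{prop:orientcoloring}, for every facet $\tau$ of $\Gamma$ the submatrix $C_\tau$ is a column permutation of the matrix whose columns are $e_1,\dots,e_{d+1}$ in order; hence $C_\tau$ is positively spanning, and $C$ positively decorates every facet of $\Gamma$.

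The second step is to apply Theorem~\ref{thm:nbpossols} to this pair $(C,\nu)$. By hypothesis, $\nu$ certifies the regularity of a triangulation of $\mathcal A$ of which $\Gamma$ is a subcomplex, so the theorem yields a threshold $t_0>0$ such that for every $0<t<t_0$ the Viro system \eqref{E:Virosystem} associated with $(C,\nu)$ has at least as many non-degenerate positive solutions as positively decorated facets of $\Gamma$. By the first step this count is exactly the number of facets of $\Gamma$. Finally, since the zero columns of $C$ suppress the monomials attached to vertices of $\mathcal A \setminus \Vertices(\Gamma)$, the system \eqref{E:Virosystem} coincides, after deleting zero terms, with the system \eqref{Virosystem} in the statement.

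No step is really an obstacle: the only subtlety worth pointing out is that padding $C$ with zero columns at the points of $\mathcal A$ unused by $\Gamma$ is harmless, since Theorem~\ref{thm:nbpossols} is applied to a regular triangulation of the whole $\mathcal A$, and zero columns do not affect which facets of $\Gamma$ are positively decorated.
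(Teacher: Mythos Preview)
Your proposal is correct and matches the paper's intended argument: the corollary is stated there without proof, as an immediate application of Proposition~\ref{prop:orientcoloring} (providing the positively decorating matrix $C$ from the coloring) together with Theorem~\ref{thm:nbpossols}. Your handling of the points of $\mathcal A\setminus\Vertices(\Gamma)$ via zero columns is the right way to reconcile the support of the system in \eqref{Virosystem} with that of \eqref{E:Virosystem}.
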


\begin{figure}
\centering
\begin{tikzpicture}[mynode/.style={circle, inner sep=1.5pt, fill=red}, scale = 0.7]
    \coordinate (Origin)   at (0,0);
    \coordinate (XAxisMin) at (-3,0);
    \coordinate (XAxisMax) at (5,0);
    \coordinate (YAxisMin) at (0,-4);
    \coordinate (YAxisMax) at (0,4);
    \draw [thin, lightgray,-latex] (XAxisMin) -- (XAxisMax);
    \draw [thin, lightgray,-latex] (YAxisMin) -- (YAxisMax);
  \draw[style=help lines, step= 0.5,dashed,opacity=0.5, lightgray] (-3,-4) grid (5,4);
  \node[draw,mynode, label=above:{$1$}] (A1) at (0.5,-0.5) {};
  \node[draw,mynode, label=left:{$2$}] (A2) at (-2,-3) {};
  \node[draw,mynode, label=above:{$3$}] (A3) at (-2,2) {};
  \node[draw,mynode, label=above:{$4$}] (A4) at (3,0) {};
  \node[draw,mynode, label=above:{$5$}] (A5) at (1.5,3) {};
  \node[draw,mynode, label=above:{$6$}] (A6) at (5,2.5) {};
  \node[draw,mynode, label=right:{$7$}] (A7) at (3,-3) {};
  \fill[opacity=0.3, blue] (A1.center) -- (A2.center) -- (A3.center) -- cycle;
  \fill[opacity=0.3, blue] (A1.center) -- (A4.center) -- (A3.center) -- cycle;
  \fill[opacity=0.3, blue] (A1.center) -- (A2.center) -- (A7.center) -- cycle;
  \fill[opacity=0.3, blue] (A1.center) -- (A4.center) -- (A7.center) -- cycle;
  \fill[opacity=0.3, blue] (A4.center) -- (A5.center) -- (A3.center) -- cycle;
  \fill[opacity=0.3, blue] (A4.center) -- (A5.center) -- (A6.center) -- cycle;
  \draw (A1) -- (A2) -- (A3) -- (A1) -- (A4) --(A3) -- (A5) -- (A4) -- (A6) -- (A5);

  \draw (A4) -- (A7) -- (A1);
  \draw (A2) -- (A7);
\end{tikzpicture}
\caption{The balanced simplicial complex from Example \ref{ex:simcomp6}.
\label{fig:color}}
\end{figure}
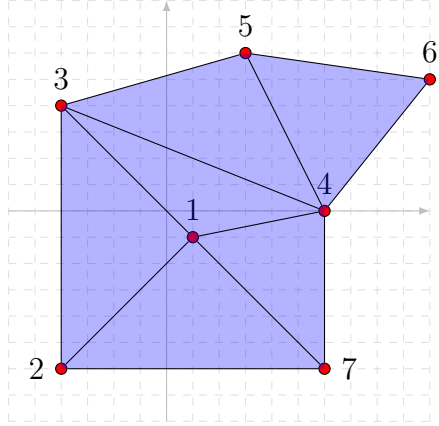

\begin{example}\label{ex:simcomp6}
Let $d=2$, $\mathcal A=\{w_1, \ldots,w_7\}$ where $w_1=(1,-1)$, $w_2=(-4,-6)$, $w_3=(-4,4)$, $w_4=(6,0)$, $w_5=(3,6)$, $w_6=(10,5)$ and $w_7=(6,-6)$,
Choosing heights $\nu(w_1)=\nu(w_2)=\nu(w_3)=0$, $\nu(w_4) = 3$, $\nu(w_5)= 5$, $\nu(w_6) = 10$, and $\nu(w_7) = 2$
provides a regular triangulation of $\mathcal A$ which has the balanced
simplicial subcomplex described in Figure~\ref{fig:color}.
By Corollary~\ref{coro:manypositive}, the Viro polynomial system
\[
\begin{array}{rcl}
X_1X_2^{-1}- X_1^{-4} X_2^4 + t^5X_1^{3}X_2^6 - t^{10}X_1^{10} X_2^{5} - t^2 X_1^6X_2^{-6}=0\\
X_1^{-4} X_2^{-6} - X_1^{-4} X_2^4 + t^3X_1^6 - t^{10}X_1^{10} X_2^{5} - t^2 X_1^6X_2^{-6}=0
\end{array}
\]
has at least six solutions in the positive orthant for $t>0$ sufficiently
small. 
\end{example}

In particular we recover the following result contained implicitly in \cite[Lemma 3.9]{SS}
concerning maximally positive systems.

We use the notation $\Vol(\,\cdot \, )$ for the \emph{normalized volume}, that is, $d!$ times the Euclidean volume in $\R^d$. A triangulation $\Gamma$ of $\mathcal A$ is called \emph{unimodular} if
for any facet $\tau \in \Gamma$ we have $\Vol(\Gamma)=1$. A
polynomial system with support $\mathcal A$ is called \emph{maximally positive} if it
has $\Vol(Q)$ non-degenerate positive solutions, where $Q$ is the convex-hull
of $\mathcal A$. By Kushnirenko Theorem~\cite{Kouch}, if a system is maximally positive then
all its solutions in the complex torus $(\C\setminus\{0\})^d$ lie in the positive orthant $(0,\infty)^d$.

\begin{corollary}[\cite{SS}] \label{coro:manypositiveunimodular}
Assume that $\Gamma$ is a regular  unimodular triangulation of a
finite set $\mathcal A \subset \Z^d$.
Assume furthermore that $\Gamma$ is balanced, or equivalently that its adjacency graph is bipartite. Let $\nu: \mathcal A \rightarrow \R$ be a function certifying the regularity of $\Gamma$ and let $\gamma:\Vertices(\Gamma)\rightarrow [d+1]$ be a
 $(d+1)$-coloring of $\Gamma$. Then, for $t>0$ sufficiently small, the Viro polynomial system
\eqref{Virosystem} is maximally positive.  
\end{corollary}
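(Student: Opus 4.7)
The plan is to combine three ingredients: Corollary \ref{coro:manypositive} (which gives a lower bound on positive solutions from the balancedness hypothesis), the unimodularity assumption (which converts the facet count into a volume), and Kushnirenko's theorem (which provides the matching upper bound).

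First I would apply Corollary \ref{coro:manypositive} directly to the whole triangulation $\Gamma$ itself, viewed as a full-dimensional balanced simplicial subcomplex of the regular triangulation $\Gamma$ of $\mathcal A$. This is legitimate because $\gamma$ is a $(d+1)$-coloring of $\Gamma$ and $\nu$ certifies the regularity of $\Gamma$. The conclusion is that for $t>0$ sufficiently small, the Viro system \eqref{Virosystem} has at least $\lvert \Gamma \rvert$ non-degenerate positive solutions, where $\lvert \Gamma \rvert$ denotes the number of facets of $\Gamma$.

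Next I would use unimodularity to convert this count into a volume. Since every facet $\tau \in \Gamma$ satisfies $\Vol(\tau)=1$ and the facets of $\Gamma$ cover $Q=\conv(\mathcal A)$ with pairwise disjoint interiors, we have
\[
\lvert \Gamma \rvert \;=\; \sum_{\tau \in \Gamma} \Vol(\tau) \;=\; \Vol(Q).
\]
Combining this with the previous step, the Viro system has at least $\Vol(Q)$ non-degenerate positive solutions for sufficiently small $t>0$.

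Finally I would invoke Kushnirenko's theorem, which states that any polynomial system with support contained in $\mathcal A$ has at most $\Vol(Q)$ isolated solutions in the complex torus $(\mathbb C \setminus \{0\})^d$, counted with multiplicity. Since positive solutions lie in the torus and are isolated (being non-degenerate), this upper bound forces equality: the Viro system has exactly $\Vol(Q)$ non-degenerate positive solutions, and since every torus solution is accounted for by a positive one, the system is maximally positive. There is no real obstacle here; the statement is essentially a direct combination of Corollary \ref{coro:manypositive}, unimodularity, and the Kushnirenko bound, with the only thing to verify being that the lower and upper bounds match, which is immediate from unimodularity.
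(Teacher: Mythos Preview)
Your proof is correct and follows essentially the same approach as the paper: apply Corollary~\ref{coro:manypositive} to get at least $\Vol(Q)$ non-degenerate positive solutions (using unimodularity to identify the facet count with the volume), then use Kushnirenko's theorem for the matching upper bound. The paper's version is simply more terse, leaving the equality $|\Gamma|=\Vol(Q)$ implicit.
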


\begin{proof}
By Corollary \ref{coro:manypositive}, the system \eqref{Virosystem} has at least
$\Vol(Q)$ non-degenerate solutions in the positive orthant for $t>0$ small
enough. On the other hand, it has at most $\Vol(Q)$ non-degenerate solutions
with non-zero complex coordinates by Kushnirenko Theorem \cite{Kouch}.
\end{proof}

This result is also a variant of \cite[Corollary 2.4]{St} which, with the same hypotheses except that of
$\Gamma$ being balanced, concludes that the system \eqref{E:Virosystem} is ``maximally real'':
it has $\Vol(Q)$ non-degenerate solutions in $(\R\setminus\{0\})^d$ (and no other solution in $(\C\setminus\{0\})^d$ by Kushnirenko Theorem).

\section{A lower bound based on  cyclic polytopes}
\label{sec:cyclic}

This section is devoted to the construction and analysis of a family of regular and
positively decorable complexes obtained as 
subcomplexes of cyclic polytopes. 

\begin{definition}\label{def:cyclic} 
  Let $d$ and $n > d+1$ be two positive integers and $a_1<a_2<\dots<a_n$ be real numbers. The cyclic
  polytope $C(n,d+1)$ associated to $(a_1,\ldots, a_n)$ is the convex hull in
  $\R^{d+1}$ of the points $(a_i,a_i^2,\ldots, a_i^{d+1}), i=1,\ldots,n$.
\end{definition}

The cyclic polytope $C(n,d+1)$ is a simplicial $(d+1)$-polytope whose combinatorial structure does not depend on the choice of the real numbers $a_1,\ldots, a_n$. In particular, let us denote by $\mathbf C_{n,d}$ the $d$-dimensional abstract simplicial complex on the vertex set $[n]$ that forms the boundary of $C(n,d+1)$. 
One of the reasons why cyclic polytopes are important is that they maximize the
number of faces of every dimension among polytopes with a given dimension and number of vertices.
We are specially interested in the case of $d$ odd, in which case the complex is as follows:

\begin{proposition}[\cite{LRS}]
\label{prop:cyclic}             
If $d$ is odd, the facets in the boundary of the cyclic polytope $C(n,d+1)$
are of the form
\[
\{i_{1} , i_{1}+1 ,i_{2} , i_{2}+1, \cdots , i_{\frac{d+1}{2}} , i_{\frac{d+1}{2}}+1\}
\]
with $1\le i_{1}$, $i_{\frac{d+1}{2}}\le n$ and $i_{j+1}>i_j+1$ for all $j$.
(If $i_{\frac{d+1}{2}}= n$ then $i_1>1$ is required, and vertex $1$  plays the
role of $i_{\frac{d+1}{2}}+1$).
The number of them equals           
\[
\binom{n- (d+1)/2 -1}{(d+1)/2-1}\quad
+\quad
\binom{n-  (d+1)/2}{(d+1)/2}.
\]
\end{proposition}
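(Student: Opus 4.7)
The plan is to deduce the proposition from the classical \emph{Gale evenness criterion} for cyclic polytopes: a subset $S\subset[n]$ of size $d+1$ indexes a facet of $C(n,d+1)$ if and only if, for every pair of indices $i<j$ in $[n]\setminus S$, the set $S\cap\{i+1,\ldots,j-1\}$ has even cardinality. I would invoke this as a well-known fact (see~\cite{LRS}) and use it as the starting point.

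First, I would translate Gale's criterion into the combinatorial pair structure of the statement. Decompose $S$ into its maximal runs of consecutive integers in $[n]$. Gale's condition forces every run bounded on both sides by elements of $[n]\setminus S$ to have even length. The two runs containing $1$ or $n$ (the boundary runs, if present) are not individually constrained, but because $|S|=d+1$ is even and every interior run is even, the two boundary runs must have the same parity. If both are even (or absent), every run splits uniquely into consecutive pairs $\{a,a+1\}$, producing the non-wrap case. If both are odd, the extremal vertices $1$ and $n$ together form the single wrap pair $\{n,1\}$, and what remains of each boundary run has even length and splits into consecutive pairs as well; this produces the wrap case in which $i_{(d+1)/2}=n$.

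Next, I would enumerate each case. In the non-wrap case the facets correspond to sequences $1\le i_1<\cdots<i_{(d+1)/2}\le n-1$ satisfying $i_{k+1}\ge i_k+2$; the shift $j_k:=i_k-(k-1)$ turns these into arbitrary $(d+1)/2$-subsets of $[n-(d+1)/2]$, yielding $\binom{n-(d+1)/2}{(d+1)/2}$. In the wrap case, the pair $\{n,1\}$ is fixed and I count the remaining $(d-1)/2$ pairs in $[2,n-1]$ with the same gap constraint; the analogous shift $j_k:=i_k-k$ produces $\binom{n-(d+1)/2-1}{(d+1)/2-1}$. Summing the two binomials gives the formula stated.

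The place where care is needed is the parity analysis of the boundary runs: one must show the dichotomy between the wrap and non-wrap cases exhausts all possibilities without overlap. Disjointness is clear because the wrap case is precisely characterized by $i_{(d+1)/2}=n$ (equivalently, $\{1,n\}\subset S$ with both boundary runs of odd length), while the non-wrap case satisfies $i_{(d+1)/2}\le n-1$. Exhaustiveness follows from the forced parity matching of the two boundary runs, a consequence of $|S|=d+1$ being even together with the even parity of every interior run.
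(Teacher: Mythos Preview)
The paper does not give a proof of this proposition: it is quoted from~\cite{LRS} as a known fact about cyclic polytopes, with no argument supplied. Your approach via Gale's evenness criterion is correct and is exactly the standard route (and is how the result is derived in~\cite{LRS}); the translation of Gale's condition into the ``union of consecutive pairs, possibly with one cyclic pair $\{n,1\}$'' description is accurate, the parity argument showing that the two boundary runs must have matching parity is sound (and correctly uses that $d+1$ is even), and the two shift bijections you describe give the two binomial counts. One small point of presentation: when you say ``the two boundary runs,'' you should note explicitly that the same parity argument forces a \emph{single} boundary run to be even (so that this situation falls into the non-wrap case), but this is implicit in your reasoning and does not affect correctness.
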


Unfortunately, not every proper subcomplex of $\mathbf C_{n,d}$  can be positively
decorated (except in trivial cases) since its adjacency graph is not bipartite.

\begin{example}\label{E:O63}
The tetrahedra
$\{1,2,3,4\}$, $\{1, 2 , 4 , 5\}$, and $\{2 , 3 , 4,5\}$ form a 3-cycle in the adjacency graph of $\mathbf C_{6,3}$.
\end{example}

We now introduce the bipartite subcomplexes of $\mathbf C_{n,d}$ that we are interested in. For the time being, 
we assume both $d+1=2k$ and $n=2m$ to be even.
If we represent any facet $\{ i_{1} , i_{1}+1 ,i_{2} , i_{2}+1, \cdots
,i_{k} , i_{k}+1\}$ of $\mathbf C_{2m,2k-1}$ by
the sequence $\{ i_1,\dots, i_{k}\}$ (where the vertex $1$ plays the
  role of $i_k+1$ if $i_k=n$, as happened in Proposition~\ref{prop:cyclic}) we have a bijection between facets of
$\mathbf C_{2m,2k-1}$ and stable sets of size $k$ in a cycle of length $2m$
(recall that a stable set in a graph is a set of vertices no two of
which are adjacent).
Consider the $(2k-1)$-dimensional
subcomplex ${\mathbf S}_{2m,2k-1}$ of $\mathbf C_{2m,2k-1}$ whose
facets are the $(2k-1)$-simplices
$\{ i_1,\dots, i_{k} \}$ such that for all $j\in [k-1]$ either
$i_j$ is odd or $i_{j+1} - i_j > 2$, and such that either $i_1\ne 2$ or $i_k\ne n$.
That is, we are allowed to take two consecutive pairs to build a simplex if both their $i_j$'s are odd, but not if they are even.
The adjacency graph of the subcomplex $\mathbf S_{2m,2k-1}$ is bipartite,
since the parity of $i_1+\dots+i_{k}$ alternates between adjacent
simplices. 

\begin{example}\label{E:O63_cont}
For $n=6$ and $d+1=4$ we have
\[
\begin{array}{rl}\mathbf S_{6,3}=&\{
  \{ 1, 2, 3, 4 \},
  \{ 1, 2, 4, 5 \},
  \{ 1, 2, 5, 6 \},
  \{ 2, 3, 5, 6 \},
  \{ 3, 4, 5, 6 \},
  \{ 1, 3, 4, 6 \}
\}.
\end{array}
\]
The tetrahedra are written so as to show that the adjacency graph is a cycle: each is adjacent with the previous and next ones in the list.
\end{example}

In order to find out and analyze the number of facets in the simplicial complexes $\mathbf S_{2m,2k-1}$ we introduce the following graphs:

\begin{definition}
\label{def:Delannoy}
The \emph{comb graph} on $2m$ vertices is the graph consisting of a path with $m$ vertices together with an edge attached to each vertex in the path. The \emph{corona graph} with $2m$ vertices is the graph consisting of a cycle of length $m$ together with an edge attached to each vertex in the cycle. Figure~\ref{fig:comb_corona}  shows the case $m=6$ of both.
\end{definition}

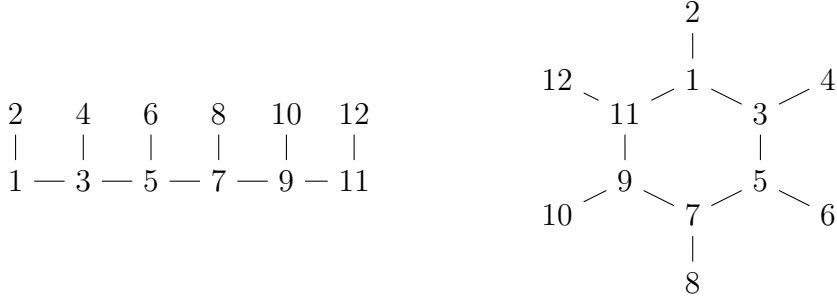
\begin{figure}[htb]
\begin{center}
  \begin{tikzpicture}[scale=1.8]
   \node (1) at (0, 0.25) {$1$};
   \node (3) at (.5,0.25) {$3$};
   \node (5) at (1,0.25) {$5$};
   \node (7) at (1.5,0.25) {$7$};
   \node (9) at (2,0.25) {$9$};
   \node (11) at (2.5, 0.25) {$11$};
   \node (2) at (0, .75) {$2$};
   \node (4) at (.5, .75) {$4$};
   \node (6) at (1., .75) {$6$};
   \node (8) at (1.5, .75) {$8$};
   \node (10) at (2, .75) {$10$};
   \node (12) at (2.5, .75) {$12$};

\draw (1) -- (3) -- (5) -- (7) -- (9) -- (11);
\draw (1) -- (2);
\draw (3) -- (4);
\draw (5) -- (6);
\draw (7) -- (8);
\draw (9) -- (10);
\draw (11) -- (12);
   \node (1) at (5, 1) {$1$};
   \node (3) at (5.5,.75) {$3$};
   \node (5) at (5.5,0.25) {$5$};
   \node (7) at (5,0) {$7$};
   \node (9) at (4.5,0.25) {$9$};
   \node (11) at (4.5, .75) {$11$};
   \node (2) at (5, 1.5) {$2$};
   \node (4) at (6, 1) {$4$};
   \node (6) at (6, 0) {$6$};
   \node (8) at (5, -0.5) {$8$};
   \node (10) at (4, 0) {$10$};
   \node (12) at (4, 1) {$12$};

\draw (1) -- (3) -- (5) -- (7) -- (9) -- (11) -- (1);
\draw (1) -- (2);
\draw (3) -- (4);
\draw (5) -- (6);
\draw (7) -- (8);
\draw (9) -- (10);
\draw (11) -- (12);
  \end{tikzpicture}
  \qquad\qquad
\caption{The comb graph (left) and the corona graph (right) on 12 vertices.}  
\label{fig:comb_corona}
\end{center}
\end{figure}

We denote by $D_{h,k}$  (respectively $F_{h,k}$) the number of matchings of
size $k$ in the comb graph (respectively, the corona graph) with $2(h+k)$
vertices. 
They form sequences A008288 and A102413 in the Online Encyclopedia of Integer Sequences~\cite{OEIS}. The following table shows the first terms:
\[
\begin{array}{c|ccccccc|ccccccc}
&& \multicolumn{5}{c}{D_{h,k}} &&& \multicolumn{5}{c}{F_{h,k}}&
 \\
 \hline
 k=
 & 0 & 1 & 2 & 3 & 4 & 5 & 6 
 & 0 & 1 & 2 & 3 & 4 & 5 & 6 \\
 \hline
h=0: 
&1 & 1 & 1 & 1 & 1 & 1 & 1 
&1 & 1 & 1 & 1 & 1 & 1 & 1 \\
h=1: 
&1 & 3 & 5 & 7 & 9 & 11 & 13 
&1 & 4 & 6 & 8 & 10 & 12 & 14 \\
h=2: 
&1 & 5 & 13 & 25 & 41 & 61 & 85
&1 & 6 & 16 & 30 & 48 & 70 & 96\\
h=3: 
&1 & 7 & 25 & 63 & 129 & 231 & 377
&1 & 8 & 30 & 76 & 154 & 272 & 438\\
h=4: 
&1 & 9 & 41 &129 & 321 & 681 & 1289
&1 & 10 & 48 &154 & 384 & 810 & 1520\\
h=5: 
&1 & 11 & 61 & 231 & 681 & 1683 & 3653
&1 & 12 & 70 & 272 & 810 & 2004 & 4334\\
h=6: 
&1 & 13 & 85 & 377 & 1289 & 3653 & 8989
&1 & 14 & 96 & 438 & 1520 & 4334 & 10672\\
\end{array}
\]

The numbers $D_{h,k}$ are the well-known  \emph{Delannoy numbers}, which have been thoroughly studied~\cite{BS05}. Besides matchings in the comb graph, $D_{h,k}$ equals the number of paths from $(0,0)$ to $(h,k)$ with steps $(1,0)$, $(0,1)$ and $(1,1)$. The equivalence of the two definitions follows from the fact that both satisfy the following recurrence, which can also be taken as a definition of $D_{h,k}$:
\[
D_{h,0}=D_{0,k}=1, \quad \text{and} \quad D_{h,k}= D_{h,k-1} + D_{h-1,k} +
D_{h-1,k-1}, \quad \forall i,j\ge 1. 
\]
The Delannoy numbers can also be defined by either of the formulas in Eq.~\eqref{eq:Delannoy}.

\begin{proposition}
\label{prop:corona}
\[ 
|\mathbf S_{2(h+k),2k-1}|=F_{h,k}=D_{h,k} + D_{h-1,k-1}.
\]
In particular, 
$
D_{h,k} < |\mathbf S_{2(h+k),2k-1}| < 2 D_{h,k}.
$
\end{proposition}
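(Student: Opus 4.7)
The plan is to prove $|\mathbf{S}_{2(h+k),2k-1}| = F_{h,k}$ via an explicit bijection between facets of $\mathbf{S}_{2(h+k),2k-1}$ and size-$k$ matchings of the corona graph on $2(h+k)$ vertices; the identity $F_{h,k} = D_{h,k} + D_{h-1,k-1}$ will then follow by a simple edge-deletion recurrence on the corona, and the strict inequalities fall out of the standard Delannoy recurrence.

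For the bijection, set $m = h+k$, label the corona as the cycle $c_1 c_2 \cdots c_m c_1$ with a pendant $p_\ell$ attached to each $c_\ell$, and identify the $2m$ vertices of $\mathbf{C}_{2m,2k-1}$ with corona vertices via $2\ell-1 \leftrightarrow p_\ell$ and $2\ell \leftrightarrow c_\ell$. To a facet represented by $\{i_1,\dots,i_k\}$ I associate the $k$ corona edges obtained by sending each ``domino'' $\{i_j, i_j+1\}$ to the pendant edge $\{p_\ell, c_\ell\}$ when $i_j = 2\ell-1$ is odd, and to the cycle edge $\{c_\ell, c_{\ell+1}\}$ (indices mod $m$) when $i_j = 2\ell$ is even (in particular the wrap-around domino $\{2m,1\}$ is sent to $\{c_m,c_1\}$). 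To check this is a bijection onto the size-$k$ matchings of the corona, I would go through a short case analysis: two pendants, a pendant and a cycle edge, and the wrap-around cases all match between ``domino sharing a vertex in $[2m]$'' and ``corona edges sharing a vertex''. The only delicate case is that two ``between'' dominoes are always disjoint as subsets of $[2m]$ yet their corona images $\{c_\ell,c_{\ell+1}\}$ and $\{c_{\ell+1},c_{\ell+2}\}$ share $c_{\ell+1}$; this is precisely forbidden by the extra $\mathbf{S}$-condition ``$i_{j+1}-i_j=2 \Rightarrow i_j$ odd'' (and by the wrap-around condition ``not both $i_1=2$ and $i_k=n$''), so the restrictions in the definition of $\mathbf{S}$ match the matching condition on the nose.

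Given the bijection, $|\mathbf{S}_{2m,2k-1}|=F_{h,k}$ by definition of $F_{h,k}$. For the recurrence, distinguish the cycle edge $e=\{c_m,c_1\}$: matchings avoiding $e$ are matchings of the graph obtained by deleting $e$, which is the comb graph on $2m=2(h+k)$ vertices, giving $D_{h,k}$ size-$k$ matchings; matchings using $e$ lose the four vertices $c_1,c_m,p_1,p_m$ and reduce to size-$(k-1)$ matchings of the comb graph on $2(m-2)=2((h-1)+(k-1))$ vertices, giving $D_{h-1,k-1}$ of them. Adding yields $F_{h,k}=D_{h,k}+D_{h-1,k-1}$. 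The inequality $D_{h,k}<|\mathbf{S}_{2(h+k),2k-1}|<2D_{h,k}$ is then immediate: Delannoy numbers are strictly positive so $D_{h-1,k-1}>0$, and the recurrence $D_{h,k}=D_{h-1,k}+D_{h,k-1}+D_{h-1,k-1}$ with positive summands gives $D_{h,k}>D_{h-1,k-1}$. The main obstacle is the bijection verification, specifically keeping the cyclic indexing straight and confirming that each of the two $\mathbf{S}$-restrictions (the oddness condition and the wrap-around condition) corresponds exactly to the absence of a shared $c$-vertex in the corona matching.
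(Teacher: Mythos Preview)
Your proposal is correct and follows essentially the same route as the paper's own proof: the same bijection sending a domino with odd $i_j$ to a pendant (spike) edge and a domino with even $i_j$ to a cycle edge of the corona, the same deletion of one cycle edge to obtain $F_{h,k}=D_{h,k}+D_{h-1,k-1}$, and the same concluding inequality $0<D_{h-1,k-1}<D_{h,k}$. You spell out the case analysis for the bijection more explicitly than the paper (which simply declares the correspondence ``clearly a bijection''), but the argument is identical.
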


\begin{proof} 
To show that $F_{h,k}=D_{h,k} + D_{h-1,k-1}$, observe that the corona graph is
obtained from the comb graph by adding an edge between the first and last
vertices of the path. We call that edge the reference edge of the corona graph
(the edge $1$---$11$ in Figure~\ref{fig:comb_corona}). Matchings in the corona
graph that do not use the reference edge are the same as matchings in the comb
graph, and are counted by $D_{h,k}$. Matchings of size $i$ using the reference
edge are the same as matchings of size $i-1$ in the comb graph obtained from
the corona by deleting the two end-points of the reference edge; this graph
happens to be  comb graph with $2(h+k-2)$ edges, so these matchings are counted
by $D_{h-1,k-1}$.

To show $|\mathbf S_{2(h+k),2k-1}|=F_{h,k}$, let $m=h+k$. Observe that each simplex in $\mathbf S_{2m,2k-1}$ consists of $k$ pairs
  $(i_j,i_{j}+1)$, $j=1,\dots,k$, with the restriction that when $i_j$ is even then the elements 
  $i_j-1$ and $i_j + 2$ cannot be used. In the
  corona graph, pairs with $i_j$ odd correspond to the spikes and pairs with
  $i_j$ even correspond to the cycle edge between two spikes, which ``uses up''
  the four vertices of two spikes. This correspondence is clearly a bijection.

The last part follows from the previous two since
\[
D_{h,k} < D_{h,k} + D_{h-1,k-1} < 2 D_{h,k}.
\]
\end{proof}

\begin{example}
  Proposition~\ref{prop:corona} says that
\[
|{\mathbf S_{10,5}}|= F_{2,3}= D_{2,3}+D_{1,2} = 25 + 5 = 30.
\]
The following is the whole list of $30$ simplices in $\mathbf S_{10,5}$. Each row is a cyclic orbit, obtained from the first element of the row by even numbers of cyclic shifts. The first two rows, the next three row, and the last row, respectively, correspond to matchings using $0$, $1$ or $2$ edges from the cycle in the pentagonal corona, respectively.
\[
\begin{array}{rl}\mathbf S_{10,5}=&\{
  \{ 1, 2, 3, 4, 5, 6 \},
  \{ 3, 4, 5, 6, 7, 8 \},
  \{ 5, 6, 7, 8, 9, 10 \},
  \{ 1, 2, 7, 8, 9, 10 \},
  \{ 1, 2, 3, 4, 9, 10 \},
\\&
  \{ 1, 2, 3, 4, 7, 8 \},
  \{ 3, 4, 5, 6, 9, 10 \},
  \{ 1, 2, 5, 6, 7, 8 \},
  \{ 3, 4, 7, 8, 9, 10 \},
  \{ 1, 2, 5, 6, 9, 10 \}, 
  \\&
  \{ 1, 2, 3, 4, 6, 7 \},
  \{ 3, 4, 5, 6, 8, 9 \},
  \{ 1, 5, 6, 7, 8, 10\},
  \{ 2, 3, 7, 8, 9, 10 \},
  \{ 1, 2, 4, 5, 9, 10 \},
  \\&
  \{ 1, 2, 3, 4, 8, 9 \},
  \{ 1, 3, 4, 5, 6, 10 \},
  \{ 2, 3, 5, 6, 7, 8 \},  
  \{ 4, 5, 7, 8, 9, 10 \},
  \{ 1, 2, 6, 7, 9, 10 \},
  \\&
  \{ 1, 2, 4, 5, 7, 8 \},
  \{ 3, 4, 6, 7, 9, 10 \},
  \{ 1, 2, 5, 6, 8, 9\},
  \{ 1, 3, 4, 7, 8, 10\},
  \{ 2, 3, 5, 6, 9, 10\},
  \\&
  \{ 1, 2, 4, 5, 8, 9 \},
  \{ 1, 3, 4, 6, 7, 10\},
  \{ 2, 3, 5, 6, 8, 9 \},
  \{ 1, 4, 5, 7, 8, 10\},
  \{ 2, 3, 6, 7, 9, 10 \}
  \}.
\end{array}
\]
\end{example}

The symmetry $F_{h,k}=F_{k,h}$ (apparent in the table, and which follows from
the symmetry in the Delannoy numbers) implies that
$\mathbf S_{2m,2k-1}$ and $\mathbf S_{2m,2m-2k-1}$ have the same size. 
In fact, they turn out to be complementary:

\begin{theorem}
\label{thm:S_decorable}
Let  $\mathbf S'_{2m,2k-1}$ denote the image of $\mathbf S_{2m,2k-1}$ under the following relabelling of vertices: $(1,2,3,4,\dots,2m-1,2m) \mapsto (2,1,4,3,\dots,2m,2m-1)$. (That is, we swap the labels of $i$ and $i+1$ for every odd $i$). Then,
$\mathbf S'_{2m,2k-1}$ is the complement of $\mathbf S_{2m,2m-2k-1}$. 
In particular, $\mathbf S_{2m,2k-1}$ is positively decorable for all $k$ and  regular for $k\ge 2$.
\end{theorem}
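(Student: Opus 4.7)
The plan is to establish the displayed identity $\mathbf{S}'_{2m,2k-1}=\overline{\mathbf{S}_{2m,2m-2k-1}}$ via an explicit involutive bijection between matchings of the corona graph, and then to deduce the regularity and positive-decorability statements using Proposition~\ref{prop:regular} and Lemma~\ref{lem:galeposdecreg}.

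First, I would encode facets as matchings, following the proof of Proposition~\ref{prop:corona}. A matching $M$ in the corona graph on $2m$ vertices is recorded as a pair $(A,E)$ with $A,E\subseteq[m]$: here $l\in A$ selects the spike pair $\{2l-1,2l\}$ and $j\in E$ selects the cycle pair $\{2j,2j+1\}$ (with cyclic convention $\{2m,1\}$ when $j=m$). The matching conditions translate to $E\cap(E+1)=A\cap E=A\cap(E+1)=\emptyset$ (indices mod $m$). Since $\sigma$ fixes each spike pair as a set and sends $\{2j,2j+1\}$ to $\{2j-1,2j+2\}$, splitting the facet into odd and even parts gives
\[
\sigma(V(M))=\{2l-1:l\in A\cup E\}\ \cup\ \{2l:l\in A\cup(E+1)\}.
\]

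The heart of the argument is to define the dual matching $M^*=(A^*,E^*)$ by
\[
E^*:=E, \qquad A^*:=[m]\setminus\bigl(A\cup E\cup(E+1)\bigr),
\]
so that $M^*$ retains the cycle pairs of $M$ and adds every spike untouched by $M$. I then check routinely that $M^*$ is a valid matching of size $m-k$, that $M\mapsto M^*$ is an involution, and that $V(M^*)=[2m]\setminus\sigma(V(M))$. The last equality reduces to the two set identities $A^*\cup(E+1)=[m]\setminus(A\cup E)$ and $A^*\cup E=[m]\setminus(A\cup(E+1))$, both immediate from the disjointness conditions above. This proves the identity $\mathbf{S}'_{2m,2k-1}=\overline{\mathbf{S}_{2m,2m-2k-1}}$.

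For the ``in particular'' statements, regularity when $k\geq 2$ follows because $\mathbf{S}_{2m,2k-1}$ is then a proper subcomplex of $\mathbf{C}_{2m,2k-1}$---for instance, a facet such as $\{2,3,4,5,i_3,i_3+1,\dots\}$ of $\mathbf{C}_{2m,2k-1}$ violates the defining condition of $\mathbf{S}$---so Proposition~\ref{prop:regular} applies. For positive decorability, the relabeling $\sigma$ is a vertex bijection, so $\mathbf{S}_{2m,2k-1}\cong\mathbf{S}'_{2m,2k-1}$ and, passing to complements, $\overline{\mathbf{S}_{2m,2k-1}}\cong\mathbf{S}_{2m,2m-2k-1}\subseteq\mathbf{C}_{2m,2m-2k-1}$. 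Since $\mathbf{C}_{2m,2m-2k-1}$ is the boundary of a simplicial $(2m-2k)$-polytope, Lemma~\ref{lem:galeposdecreg} yields positive decorability of $\mathbf{S}_{2m,2k-1}$. The only genuine obstacle is discovering the right involution $M\mapsto M^*$; once the explicit formula above is in hand, the verification is essentially set-theoretic bookkeeping.
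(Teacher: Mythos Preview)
Your proposal is correct and follows essentially the same route as the paper: the same involution on corona matchings (keep the cycle edges, complement the available spikes) establishes the complementarity, and then regularity and positive decorability are read off from Proposition~\ref{prop:regular} and Lemma~\ref{lem:galeposdecreg} exactly as you do. The only difference is that you make the bookkeeping fully explicit via the $(A,E)$ encoding and the two set identities, whereas the paper states the involution and its effect more informally.
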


\begin{proof} Consider the following obvious involutive bijection $\rho$
  between matchings of size $k$ and matchings of size $m-k$ in the corona
  graph: For a given matching $M$, let $\rho(M)$ have the same edges of the
  cycle as $M$ and the complementary set of (available) spikes. Remember that
  once a matching has been decided to use $i$ edges of the cycle, there are
  $m-2i$ spikes available, of which $M$ uses $k-i$ and $\rho(M)$ uses the other
  $m-k-i$. The relabeling of the vertices makes that, for each odd $i$, if the facet
  of $\mathbf S_{2m,2k-1}$ corresponding to $M$ uses the pair of vertices
  $i+1$ and $i+2$, then in the facet corresponding to $\rho(M)$ we are using the
  complement set from the four-tuple $\{i,i+1,i+2,i+3\}$ (except they have been
  relabeled to $i+1$ and $i+2$ again).

Since the complex $\mathbf S_{2m,2k-1}$ is a subset of the boundary of the cyclic polytope, and a proper subset for $k\ge 2$, it is regular and positively decorable.
\end{proof}

\begin{corollary}
\label{coro:S_decorable}
For every $h,k\in \Z$ with $h>0$, $k>1$, one has
\[
\Xi_{2k,2h} \ge \Xi_{2k-1, 2h}\ge  F_{h,k} \ge D_{h,k}.  
\]
\end{corollary}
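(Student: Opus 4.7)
The plan is to combine the three main inputs already established in this section: Theorem~\ref{thm:boundXi} (which turns regular positively decorable complexes into lower bounds for $\Xi$), Theorem~\ref{thm:S_decorable} (which asserts that $\mathbf S_{2m,2k-1}$ has both of these properties), and Proposition~\ref{prop:corona} (which counts its facets).

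First, I would set $m:=h+k$ and consider the complex $\Gamma = \mathbf S_{2m,2k-1}$, viewed as an $(n,d)$-complex with $n=2(h+k)$ and $d=2k-1$. Since $k \ge 2$ and $h \ge 1$, Theorem~\ref{thm:S_decorable} guarantees that $\Gamma$ is both regular and positively decorable, so part~(1) of Theorem~\ref{thm:boundXi} yields
\[
\Xi_{2k-1,\, 2(h+k)-(2k-1)-1} \;\ge\; |\mathbf S_{2(h+k),2k-1}|.
\]
The left-hand exponent simplifies to $2h$, and by Proposition~\ref{prop:corona} the right-hand side equals $F_{h,k}$. This establishes the middle inequality $\Xi_{2k-1,2h}\ge F_{h,k}$.

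The remaining two inequalities are immediate. For $\Xi_{2k,2h} \ge \Xi_{2k-1,2h}$, I would invoke Remark~\ref{increasing}, which asserts monotonicity of $\Xi_{d,k}$ in each argument separately. For $F_{h,k} \ge D_{h,k}$, the identity $F_{h,k}=D_{h,k}+D_{h-1,k-1}$ from Proposition~\ref{prop:corona}, together with the obvious nonnegativity of $D_{h-1,k-1}$ (it is a count of lattice paths, and in fact is at least $1$ when $h,k \ge 1$), gives the result.

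There is essentially no obstacle at this stage: the genuine difficulty lies upstream in Theorem~\ref{thm:S_decorable}, whose proof relies on the Gale-duality bridge between regularity and positive decorability developed in Section~\ref{sec:dualgraph} together with the combinatorial involution on corona matchings. Once that structural statement is in hand, the corollary is a direct book-keeping consequence.
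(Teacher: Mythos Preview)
Your proof is correct and follows essentially the same approach as the paper: both derive the middle inequality by applying Theorem~\ref{thm:boundXi} to the regular, positively decorable complex $\mathbf S_{2(h+k),2k-1}$ from Theorem~\ref{thm:S_decorable}, invoke Remark~\ref{increasing} for the first inequality, and use Proposition~\ref{prop:corona} for the last.
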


\begin{proof}
  The first inequality follows from Remark~\ref{increasing}.
The middle inequality is a direct consequence of Theorem~\ref{thm:S_decorable}
and Theorem~\ref{thm:boundXi}, since $\mathbf S_{2(k+h), 2k-1}$ is regular and positively decorable.
The last inequality follows from Proposition~\ref{prop:corona}.
\end{proof}

\begin{remark}
\label{rem:S_decorable}
The above result is our tightest bound for $\Xi_{d,k}$ when $d$ is odd and $k$ even. For other parities we can proceed as follows:
\begin{itemize}
\item We define $\mathbf S_{2m-1,2k-1}$ to be the deletion of vertex $2m$ in $\mathbf S_{2m,2k-1}$. That is, we remove all facets that use vertex $2m$.
\item We define $\mathbf S_{2m-1,2k-2}$ to be the link of vertex $2m$ in $\mathbf S_{2m,2k-1}$. That is, we keep facets that use vertex $2m$, but remove vertex $2m$ in them.
\end{itemize}

Clearly, $|\mathbf S_{2m,2k-1}| = |\mathbf S_{2m-1,2k-1}| + |\mathbf S_{2m-1,2k-2}|$. Also, since deletion in the complement complex is the complement of the link, we still have that $\mathbf S_{2m-1,2k-1}$ and $\mathbf S_{2m-1,2m-2k-2}$ are complements to one another. 
Moreover, 
since $\mathbf S_{2m,2k-1}$ has a dihedral symmetry acting transitively on vertices and since each facet has a fraction of $k/m$ of the vertices, we have that 
\[
|\mathbf S_{2m-1,2k-1}| = \frac{m-k}{m} |\mathbf S_{2m,2k-1}|
\qquad \text{and}\qquad
|\mathbf S_{2m-1,2k-2}| = \frac{k}{m} |\mathbf S_{2m,2k-1}|.
\]
This, with Corollary~\ref{coro:S_decorable}, implies
\[
\Xi_{2i-1,2j-1} \geq \frac{j}{i+j} \,  F_{i,j} 
\qquad \text{and}\qquad
\Xi_{2i,2j} \geq \frac{i+1}{i+j+1} \,   F_{i+1,j}.
\]

For $\Xi_{2i, 2j-1}$ we can say
\[
\Xi_{2i, 2j-1}\geq
\Xi_{1, 1}\cdot\Xi_{2i-1, 2(j-1)} \ge 2 \, F_{i,j-1}.
\]
\end{remark}

For example, we have that
\begin{equation}
\label{eq:Xi_dd}
\Xi_{d,d} \ge |\mathbf S_{2d+1,d}| = 
\begin{cases}  
 \frac12 |\mathbf S_{2d+2,d}| & \hbox{if $d$ is odd}.\\ 
 \\
 \frac{d/2+1}{d+1} |\mathbf S_{2d+2,d+1}| & \hbox{if $d$ is even}.\\ 
\end{cases}
\end{equation}
The following table shows the lower bounds for $\Xi_{d,d}$ obtained from this
formula, which form sequence A110110 in the Online Encyclopedia of Integer Sequences~\cite{OEIS}:
\[
\begin{array}{cccc}
d& \frac12 |\mathbf S_{2d+2,d}| & \frac{d/2+1}{d+1} |\mathbf S_{2d+2,d+1}| & |\mathbf S_{2d+1,d}| \\
\hline
1  &\frac12 4=&&  2\\
2  &&\frac23 6=&  4\\
3  &\frac12 16=&&  8\\
4&&\frac35 30=&  18\\
5 &\frac12 76=&&  38\\
6 &&\frac47 154=&  88\\
7  &\frac12 384=&&  192\\
8 &&\frac59 810=&  450 \\
9  &\frac12 2004=&&  1002\\
\end{array}
\]

\section{Comparison of our bounds with previous ones}
\label{sec:asymptotics}

In order to derive asymptotic lower bounds on $\Xi_{d,k}$ we
now look at the asymptotics of Delannoy numbers.

\begin{proposition}
\label{prop:delannoy}
For every $i,j\in \Z_{> 0}$ we have
\[
\lim_{n\to \infty} (F_{in,jn})^{1/n} =
\lim_{n\to \infty} (D_{in,jn})^{1/n} = \left( \frac{\sqrt{i^2+j^2}+j}i\right)^i  \left(\frac{\sqrt{i^2+j^2}+i}j\right)^j.
\]
\end{proposition}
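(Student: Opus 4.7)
The first equality $\lim_n F_{in,jn}^{1/n} = \lim_n D_{in,jn}^{1/n}$ follows immediately from Proposition~\ref{prop:corona}: the sandwich $D_{h,k} \le F_{h,k} \le 2D_{h,k}$ yields $D_{in,jn}^{1/n} \le F_{in,jn}^{1/n} \le 2^{1/n} D_{in,jn}^{1/n}$, and $2^{1/n}\to 1$, so either sequence converges if and only if the other one does, to the same value. It therefore suffices to determine $\lim_n D_{in,jn}^{1/n}$, which I would attack by Laplace's method applied to the explicit sum in \eqref{eq:Delannoy}.

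Write $D_{in,jn} = \sum_{\ell=0}^{\min(in,jn)} 2^\ell \binom{in}{\ell}\binom{jn}{\ell}$ and set $\ell = \alpha n$ with $\alpha \in [0,\min(i,j)]$. A direct application of Stirling's formula gives
\[
\Bigl(2^{\alpha n}\tbinom{in}{\alpha n}\tbinom{jn}{\alpha n}\Bigr)^{\!1/n} \longrightarrow \; f(\alpha) \;:=\; \frac{2^{\alpha}\, i^i\, j^j}{\alpha^{2\alpha}(i-\alpha)^{i-\alpha}(j-\alpha)^{j-\alpha}},
\]
uniformly in $\alpha$ on compact sub-intervals of the open interval, with Stirling errors polynomial in $n$. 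Since the number of terms in the sum is only linear in $n$, a standard sandwich between the maximal summand and the total sum yields
\[
\lim_{n\to\infty} D_{in,jn}^{1/n} \;=\; \max_{\alpha \in [0,\min(i,j)]} f(\alpha).
\]
Setting the logarithmic derivative of $f$ equal to zero reduces (after collecting terms) to the quadratic equation $\alpha^2 = 2(i-\alpha)(j-\alpha)$, whose unique root in $(0,\min(i,j))$ is $\alpha^* := i+j-s$ with $s := \sqrt{i^2+j^2}$; since $f(0)=1$ and $f$ has no other interior critical point, $\alpha^*$ is the global maximizer.

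The remaining task, and the only really computational part, is to verify that $f(\alpha^*)$ equals the claimed closed form. I would use the three identities $i-\alpha^* = s-j$, $j-\alpha^* = s-i$, and (from the critical-point equation) $(\alpha^*)^2 = 2(s-i)(s-j)$, together with the exponent identities $\alpha^* + (s-j) = i$ and $\alpha^* + (s-i) = j$. These first collapse $(\alpha^*)^{2\alpha^*} = 2^{\alpha^*}(s-i)^{\alpha^*}(s-j)^{\alpha^*}$ and then simplify $f(\alpha^*)$ to $i^i j^j/\bigl((s-j)^i (s-i)^j\bigr)$. A final conjugation using $(s-j)(s+j)=i^2$ and $(s-i)(s+i)=j^2$ rewrites this as $\bigl((s+j)/i\bigr)^i\bigl((s+i)/j\bigr)^j$, which is the claimed formula. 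The main obstacle, then, is not conceptual but algebraic: all of the substance is in the Laplace estimate, and all the cleverness is in arranging this last chain of substitutions so that $\alpha^*$ disappears.
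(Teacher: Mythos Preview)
Your proposal is correct and follows essentially the same route as the paper: both reduce the first equality to the sandwich from Proposition~\ref{prop:corona}, identify the dominant summand in \eqref{eq:Delannoy} via the same quadratic $\alpha^2=2(i-\alpha)(j-\alpha)$ with root $\alpha^*=i+j-\sqrt{i^2+j^2}$, and carry out the identical chain of algebraic simplifications (using $(\alpha^*)^2=2(s-i)(s-j)$ and then conjugating with $(s\pm j)(s\mp j)=i^2$, etc.). The only cosmetic difference is that the paper locates the maximizing $\ell$ by the discrete ratio $f(\ell)/f(\ell-1)=1$ rather than by setting a logarithmic derivative to zero, which amounts to the same equation.
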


\begin{proof}
The first equality follows from Proposition~\ref{prop:corona}. For the second one,
since we have 
\[
D_{i,j}=\sum_{\ell=0}^{\min\{i,j\}} 2^\ell \binom i\ell \binom j\ell,
\]
we conclude that 
\[
\lim_{n\to \infty} (D_{in,jn})^{1/n} = 
\lim_{n\to \infty} \left(2^\ell \binom{in}\ell \binom{jn}\ell\right)^{1/n},
\]
where $\ell = \ell(n) \in [0, \min\{in,jn\}]$ is the integer that maximizes $f(\ell):=2^\ell \binom {in}\ell \binom {jn}\ell$. To find $\ell$ we observe that 
\[
\frac{f(\ell)}{f(\ell-1)}= 
\frac{2(in-\ell)(jn-\ell)}{\ell^2}
=\frac{2(i-\alpha)(j-\alpha)}{\alpha^2},
\]
where $\alpha:=\ell/n$. Since this quotient is a strictly decreasing function of $\alpha$ and since we can think of
 $\alpha \in [0, \min\{i,j\}]$ as a continuous parameter (because we are interested in the limit
 $n\to\infty$), the maximum we are looking for is attained when this quotient equals 1.
This happens when
 \begin{eqnarray}
\alpha^2 = 2 (i-\alpha) (j-\alpha) 
\label{eq:alpha1}
\end{eqnarray}
which implies
 \begin{eqnarray}
\alpha = i + j - \sqrt{i^2+j^2}.
\label{eq:alpha2}
\end{eqnarray}
(We here  take negative sign for the square root since $\alpha = i + j + \sqrt{i^2+j^2} > \min \{i,j\}$ is not a valid solution).
We then just need to plug $\ell=\alpha n$ in $2^\ell \binom{in}\ell \binom{jn}\ell$ and use Stirling's approximation:
\begin{eqnarray*}
 \left(2^{\alpha n} \binom{in}{\alpha n} \binom{jn}{\alpha n}\right)^{1/n} 
&\sim&
 \left(\frac{2^{\alpha n}(in)^{in} (jn)^{jn}}  {(\alpha n)^{\alpha n} ((i-\alpha)n)^{(i-\alpha)n} (\alpha n)^{\alpha n} ((j-\alpha)n)^{(j-\alpha)n} } \right)^{1/n} \\
& = & 
 \frac{2^{\alpha}i^{i} j^{j}}  {\alpha ^{2\alpha } (i-\alpha)^{i-\alpha}  (j-\alpha)^{j-\alpha} } \\
& \overset{(*)}{=} & 
 \frac{2^{\alpha}i^{i} j^{j}}  {(2 (i-\alpha) (j-\alpha) )^\alpha  (i-\alpha)^{i-\alpha}  (j-\alpha)^{j-\alpha} } \\
&=&
 \left( \frac{i}{i-\alpha}\right)^i  \left( \frac{j}{j-\alpha}\right)^j \\
& \overset{(**)}{=}  &
 \left( \frac{i}{\sqrt{i^2+j^2}-j}\right)^i  \left( \frac{j}{\sqrt{i^2+j^2}-i}\right)^j\\
& = &
 \left( \frac{\sqrt{i^2+j^2}+j}i\right)^i  \left(\frac{\sqrt{i^2+j^2}+i}j\right)^j.
\end{eqnarray*}
In equalities (*) and (**) we have used Eqs.~\ref{eq:alpha1} and ~\ref{eq:alpha2}, respectively.
\end{proof}

\begin{theorem}
\label{thm:lower_bound}
For every $d,k\in \Z_{>0}$:
\[
\lim_{n\to\infty} (\Xi_{dn,kn})^{1/n} \ge 
  \left( \frac{{\sqrt{d^2+k^2}+k}}d\right)^{\frac{d}{2}}  \left( \frac{\sqrt{d^2+k^2}+d}k\right)^{\frac{k}{2}}.
\]
\end{theorem}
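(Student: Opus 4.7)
My plan is to combine the lower bound from Corollary~\ref{coro:S_decorable} with Remark~\ref{rem:S_decorable} with the Delannoy asymptotic from Proposition~\ref{prop:delannoy}. Since Theorem~\ref{thm:limit_exists} guarantees that the limit $\lim_{n\to\infty}(\Xi_{dn,kn})^{1/n}$ exists, it is enough to evaluate it along any convenient cofinal subsequence. I would restrict to even indices $n = 2m$, so that both $dn = 2dm$ and $kn = 2km$ are even, which allows me to apply the cleanest of the four parity-dependent inequalities of Remark~\ref{rem:S_decorable}.

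For $n = 2m$, setting $i = dm$ and $j = km$ in the bound $\Xi_{2i, 2j} \geq \tfrac{i+1}{i+j+1} F_{i+1,j}$ yields
\[
\Xi_{2dm,\, 2km} \;\geq\; \frac{dm+1}{dm+km+1}\, F_{dm+1,\, km}.
\]
The prefactor converges to the positive constant $d/(d+k)$, so its $1/(2m)$-th power tends to $1$ and can be discarded in the limit. I would then invoke Proposition~\ref{prop:delannoy} with its parameters $(i,j)$ set equal to $(d,k)$ to obtain
\[
\lim_{m\to\infty} (F_{dm, km})^{1/m} = \left( \frac{\sqrt{d^2+k^2}+k}d\right)^d \left(\frac{\sqrt{d^2+k^2}+d}k\right)^k,
\]
so that $(F_{dm,km})^{1/(2m)}$ converges to the square root of the right-hand side, which is exactly the bound claimed.

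The only minor wrinkle is the shift ``$+1$'' in the first index of $F_{dm+1,km}$. I would handle this by a squeeze: since $F_{h,k}$ is monotone non-decreasing in each argument (visible from either its matching interpretation, or from the identity $F_{h,k} = D_{h,k}+D_{h-1,k-1}$ together with the standard Delannoy recurrence), we have
\[
F_{dm,\, km} \;\leq\; F_{dm+1,\, km} \;\leq\; F_{d(m+1),\, k(m+1)}.
\]
Both endpoints have the same limit under $(\,\cdot\,)^{1/(2m)}$ --- the lower one directly by Proposition~\ref{prop:delannoy}, and the upper one because the exponent $1/(2m)$ differs from $1/(2(m+1))$ by a factor tending to $1$ --- so $(F_{dm+1,km})^{1/(2m)}$ is pinched to the correct value.

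This is the entire strategy, and I do not foresee a serious obstacle: the argument simply assembles the combinatorial lower bound, the passage to an even subsequence, and the asymptotic analysis of Delannoy numbers already carried out in Proposition~\ref{prop:delannoy}. The most delicate bookkeeping is merely keeping track of the multiplicative constants (the rational prefactor and the $+1$ shift), both of which become trivial after taking $n$-th roots.
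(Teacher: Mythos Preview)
Your proposal is correct and follows the same overall strategy as the paper: restrict to a subsequence where the indices are even, apply the combinatorial lower bound from Section~\ref{sec:cyclic}, and invoke the Delannoy asymptotic of Proposition~\ref{prop:delannoy}.

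The only difference is that you make slightly more work for yourself. The paper applies Corollary~\ref{coro:S_decorable} directly in the form $\Xi_{2k',2h'} \ge \Xi_{2k'-1,2h'} \ge D_{h',k'}$ (with $k'=dn/2$, $h'=kn/2$ along even $n$), so that neither a rational prefactor nor an index shift appears; the proof is then a one-line chain of inequalities. By choosing instead the $\Xi_{2i,2j} \ge \tfrac{i+1}{i+j+1}F_{i+1,j}$ inequality from Remark~\ref{rem:S_decorable}, you introduce the factor $\tfrac{dm+1}{(d+k)m+1}$ and the ``$+1$'' shift in the first index of $F$, both of which you then have to argue away. Your handling of these is fine (the prefactor is bounded away from $0$ and $\infty$, and the squeeze via monotonicity of $F$ is valid), but they are avoidable detours rather than essential steps.
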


\begin{proof}
By Corollary~\ref{coro:S_decorable} and Proposition~\ref{prop:delannoy}:
\begin{eqnarray*}
\lim_{n\to\infty} (\Xi_{dn,kn})^{1/n} \ge
\lim_{n\to\infty} \left(D_{\frac{kn}2,\frac{dn}2}\right)^{1/n} \ge
 \left( \frac{{\sqrt{d^2+k^2}+k}}d\right)^{\frac{d}{2}}  \left( \frac{\sqrt{d^2+k^2}+d}k\right)^{\frac{k}{2}}.
\end{eqnarray*}
\end{proof}

Recall from Theorem~\ref{thm:limit_exists} that for every pair $(d,k)$ the limit $\lim_{n\to \infty} {\Xi_{dn,kn}}^{1/(dn+kn)}$ exists and depends only on the ratio $d/k$.
Moreover this limit coincides with the value at $\frac{d}{d+k}$ of the function $\alpha \mapsto \xi_{\alpha,1-\alpha} =
\lim_{n\rightarrow\infty}(\Xi_{\lfloor \alpha n\rfloor,\lfloor (1-\alpha)n\rfloor})^{1/n} \in [1,\infty]$ defined over $(0,1)$ (see Section~\ref{sec:prelim}).
Theorem~\ref{thm:lower_bound} translates to:

\begin{corollary}
\label{coro:lower_bound_xi}
For every $\alpha,\beta > 0$:
\[
\xi_{\alpha,\beta} \ge 
  \left( \frac{{\sqrt{\alpha^2+\beta^2}+\beta}}\alpha\right)^{\frac{\alpha}{2(\alpha+\beta)}}  \left( \frac{\sqrt{\alpha^2+\beta^2}+\alpha}\beta\right)^{\frac{\beta}{2(\alpha+\beta)}}.
\]
\end{corollary}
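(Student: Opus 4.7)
The plan is to derive this corollary directly from Theorem~\ref{thm:lower_bound} by a rescaling argument followed by continuity. No genuinely new estimate is required: the substantive work is already contained in Theorem~\ref{thm:lower_bound}, and what remains is essentially bookkeeping.

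First, for positive integers $d,k$, Theorem~\ref{thm:limit_exists} together with the definition of $\xi$ gives
\[
\xi_{d,k} \;=\; \lim_{n\to\infty}(\Xi_{dn,kn})^{1/(d+k)n} \;=\; \Bigl(\lim_{n\to\infty}(\Xi_{dn,kn})^{1/n}\Bigr)^{1/(d+k)}.
\]
Combining this identity with the lower bound of Theorem~\ref{thm:lower_bound} and raising to the power $1/(d+k)$ yields exactly
\[
\xi_{d,k} \;\ge\; \left(\tfrac{\sqrt{d^2+k^2}+k}{d}\right)^{\!d/(2(d+k))} \left(\tfrac{\sqrt{d^2+k^2}+d}{k}\right)^{\!k/(2(d+k))},
\]
which is the statement of the corollary for integer pairs.

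Next, for rational $\alpha,\beta>0$, the plan is to write $\alpha=d/N$ and $\beta=k/N$ with $d,k,N\in\Z_{>0}$. Since $\xi_{\alpha,\beta}$ depends only on the ratio $\alpha/\beta$ (Theorem~\ref{thm:limit_exists}), we have $\xi_{\alpha,\beta}=\xi_{d,k}$. The key observation is that the right-hand side of the claimed inequality is invariant under any rescaling $(\alpha,\beta)\mapsto(\lambda\alpha,\lambda\beta)$: each inner ratio $\frac{\sqrt{\alpha^2+\beta^2}+\beta}{\alpha}$ and $\frac{\sqrt{\alpha^2+\beta^2}+\alpha}{\beta}$ is homogeneous of degree zero, as are both exponents $\frac{\alpha}{2(\alpha+\beta)}$ and $\frac{\beta}{2(\alpha+\beta)}$. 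Hence the integer inequality transports verbatim to every rational pair $(\alpha,\beta)$.

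Finally, for real $\alpha,\beta>0$, I would invoke Remark~\ref{rem:rational}, which supplies a continuous extension of $\xi$ to $\R_{>0}^2$. Since the right-hand side of the inequality is clearly continuous in $(\alpha,\beta)$ on the open positive quadrant, approximating an arbitrary real pair by rationals and passing to the limit concludes the argument. I do not foresee any substantive obstacle here; the only points that require care are the invariance under rescaling (a routine computation) and the continuity of both sides, and both are straightforward.
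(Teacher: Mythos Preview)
Your proof is correct and follows the same route as the paper, which simply states that the corollary ``translates'' Theorem~\ref{thm:lower_bound} via the facts that $\xi_{\alpha,\beta}$ depends only on the ratio $\alpha/\beta$ and that the right-hand side is homogeneous of degree zero. You have merely spelled out the details (integer case, rational rescaling, continuous extension via Remark~\ref{rem:rational}) that the paper leaves implicit.
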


For example, taking $d=k$ the statement above gives
$$\xi_{1/2,1/2}=   \lim_{d\to\infty}(\Xi_{d,d})^{1/2d} \geq  (\sqrt 2+1)^{1/2}
\approx 1.5538 \ldots.$$

This bound is worse than the one coming from $\Xi_{2,2}\geq 7$ (see
Proposition~\ref{prop:superadditive}), which implies that $(\Xi_{d,
d})^{1/2d}\geq 7^{1/4} \approx 1.6266$. But Corollary~\ref{coro:lower_bound_xi}
gives meaningful (and new) bounds for a large choice of $d/k$ or, equivalently,
of $\alpha\in (0,1)$.  For example, taking $k=2d$ the statement above gives
\[
\xi_{1/3,2/3}=   \lim_{d\to\infty}(\Xi_{d,2d})^{1/3d} \geq   \left(\frac{\sqrt{22+10\sqrt{5}}}2\right)^{1/3} \approx1.4933 \dots.
\]
and the same bound is obtained for $\xi_{2/3,1/3}=   \lim_{d\to\infty}(\Xi_{2d,d})^{1/3d}$.

For better comparison,
Figure~\ref{fig:graph_bound} graphs the lower bound for $\xi_{\alpha,1-\alpha}$ given by Corollary~\ref{coro:lower_bound_xi}.
The red dots are the lower bounds obtained from the previously known values $\Xi_{2,2}\geq 7$ and $\Xi_{d,1} = \Xi_{1,d} = d+1$.
\begin{figure}\centering
\includegraphics[scale=0.4]{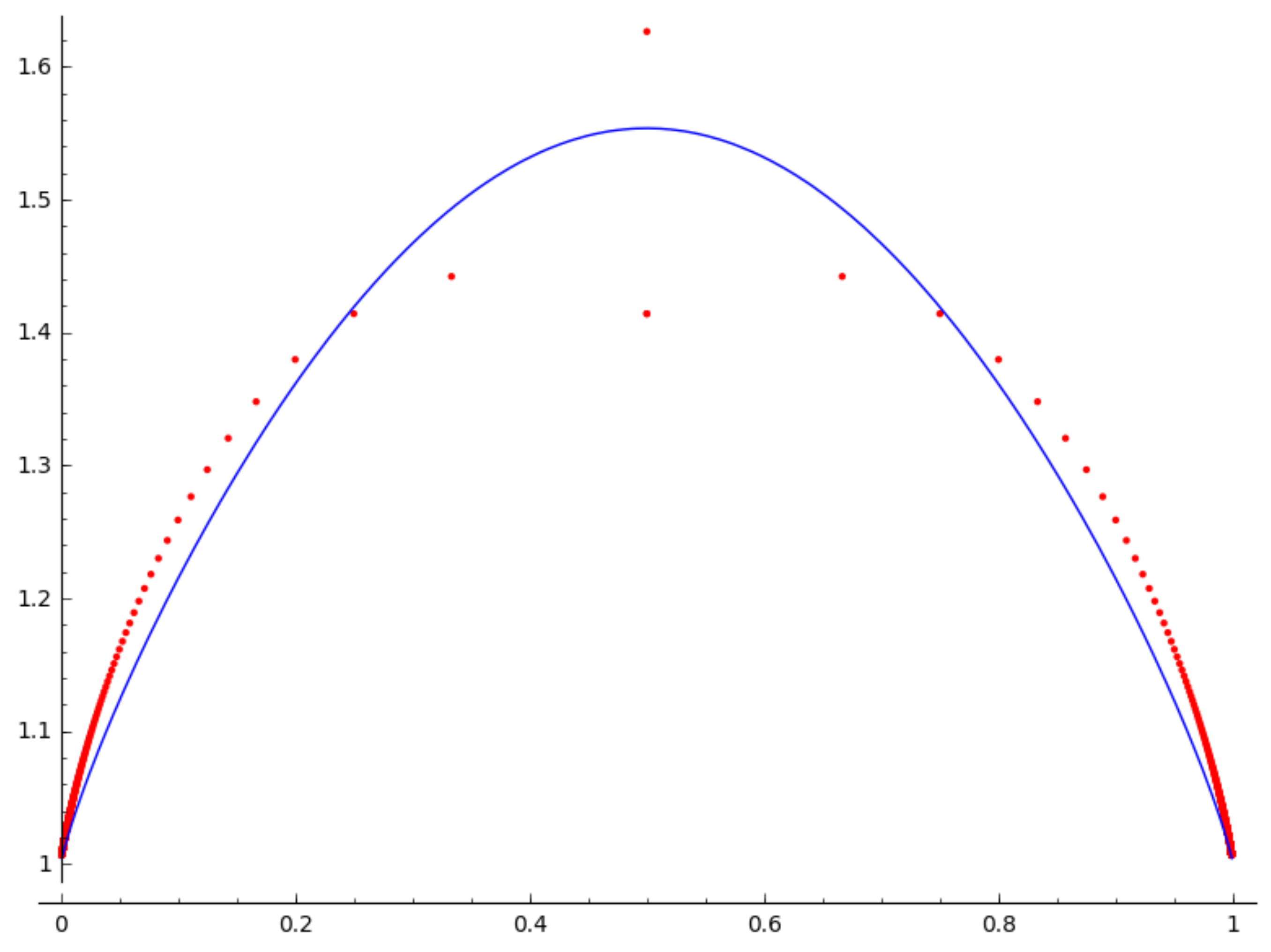}
\caption{The lower bound for $\xi_{\alpha,1-\alpha}$ coming from Theorem~\ref{thm:lower_bound} (blue curve) versus the ones coming from $\Xi_{2,2}\geq 7$ and $\Xi_{d,1} = \Xi_{1,d} = d+1$ (red dots).}
\label{fig:graph_bound}
\end{figure}

Since the function $\alpha\mapsto \xi_{\alpha,1-\alpha}$ is log-concave (Proposition~\ref{prop:concavity}), it is 
a bit more convenient to plot the logarithm of $\xi_{\alpha,1-\alpha}$; in such a plot 
we can take the upper convex envelope of all known lower bounds for $\xi$ and get a new lower bound. 
This is done in Figure~\ref{fig:graph_bound_log} where the black dashed segments show that the use of Corollary~\ref{coro:lower_bound_xi} produces new lower bounds for $\xi_{\alpha,1-\alpha}$ whenever $\alpha\in(0.2,0.8)$.

\begin{figure}\centering
\includegraphics[scale=0.4]{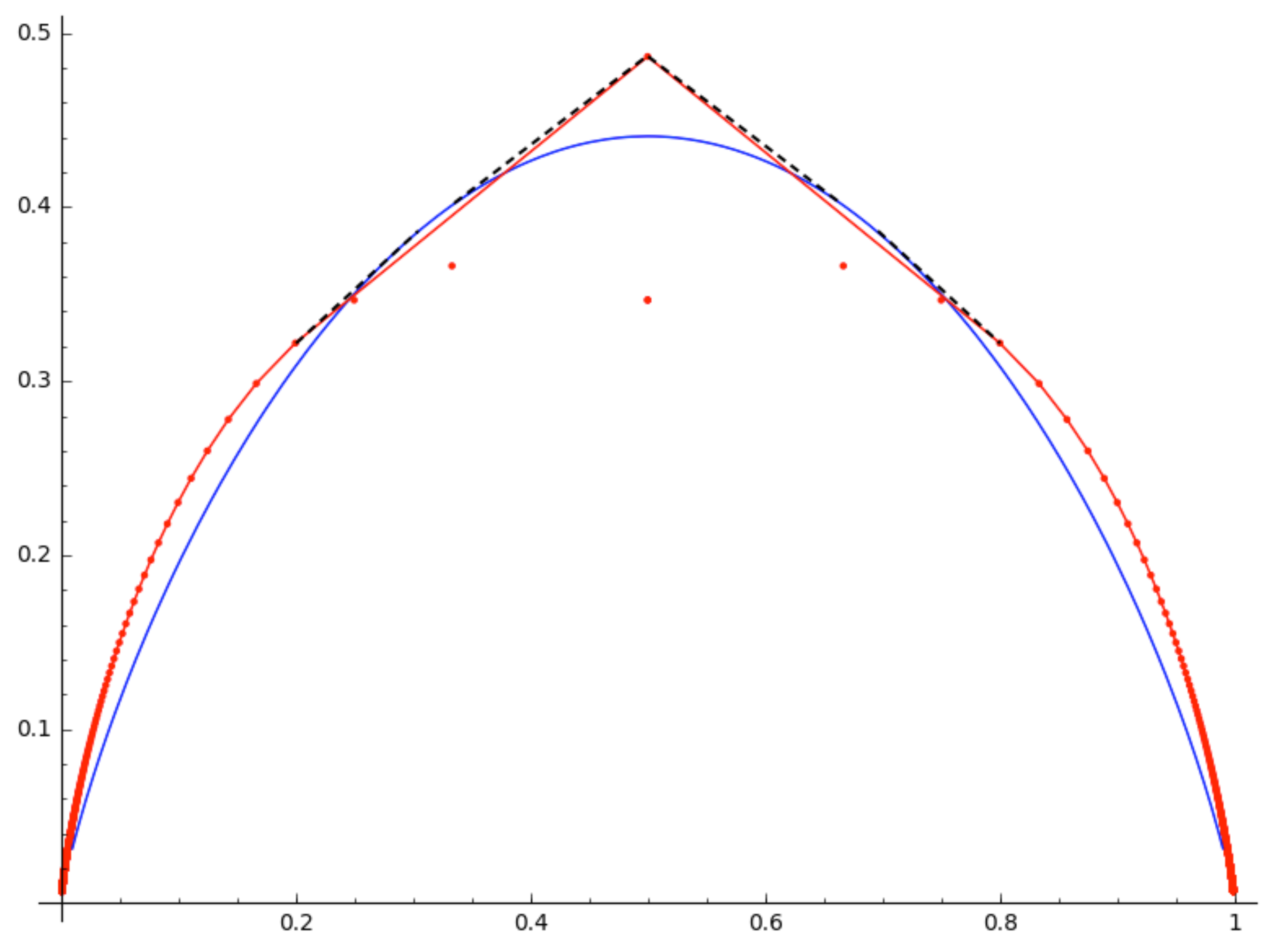}
\caption{The different lower bounds for $\log\xi_{\alpha, 1-\alpha}$, $\alpha\in(0,1)$. The red
line is the best previously known lower bound,
using Proposition~\ref{prop:superadditive} and log-concavity. Our lower
bound (blue curve) is above the previously known ones for $\alpha\in[0.2434,
0.3659]$. This range can be extended to $\alpha\in[0.2, 0.8]$ using 
log-concavity (dashed lines).}
\label{fig:graph_bound_log}
\end{figure}

\begin{remark} 
Our results are stated asymptotically, but one can also compute explicit examples where
the bound of Corollary~\ref{coro:S_decorable} gives systems with more solutions that was previously
achievable. For example, for $d=115$ and $k=264$ the maximum number of roots
obtainable combining the results in Proposition~\ref{prop:superadditive} is
$2.008 \cdot 10^{62}$
while Corollary~\ref{coro:S_decorable} gives $4.073 \cdot 10^{62}$.
\end{remark}

\section{Limitations of the polyhedral method}
\label{sec:Xi_vs_R}

We finish the paper with an analysis of how far could our methods be possibly taken. For this,
let us denote by $R_{d,k}$ the maximum size (i.e. the maximum number of facets) of a regular $(d-1)$-complex on
$d+k$ vertices such that its complement is also regular. Part (2) of Theorem~\ref{thm:boundXi} says 
\[
\Xi_{d,k} \ge R_{d+1,k}
\]
and our main result in Section~\ref{sec:cyclic} was the use of this inequality
to provide new lower bounds for $\Xi_{d,k}$. Observe that either $R_{d,k}$ or
$R_{d,k}-1$ equals the maximum size of a regular positively decorable complex (Corollary~\ref{coro:galeposdecreg}).

\begin{remark}
\label{rem:symmetry}
Our shift on parameters for $R_{d,k}$ is chosen to make it symmetric in $k$ and $d$: $R_{d,k}=R_{k,d}$. 
\end{remark}

The inequality $\Xi_{d,k} \ge R_{d+1,k}$ is certainly not an
equality, as the following table of small values shows:
\[
\begin{array}{ccc}
R_{d+1,k} && \Xi_{d,k}
\\
    \begin{array}{l | cccc}
    {}_{d}\backslash {}^k & 1&2&3&4\\
    \hline
    0&  1&1&1&1\\
    1&  1&3&4&5\\
    2&  1&4&7&8\\
    3&  1&5&8 &\ge 16\\
    \end{array}
&    \qquad\qquad\qquad &
    \begin{array}{l | cccc}
    {}_{d}\backslash {}^k & 1&2&3&4\\
    \hline
    0&  &&&\\
    1&  2&3&4&5\\
    2&  3&\ge7&\\
    3&  4&&\\
    \end{array}
\end{array}
\]
The values of $\Xi$ come from Proposition~\ref{prop:superadditive} and those of $R$ come from:
\begin{itemize}
\item $R_{1,k}=R_{k,1}=1$ is obvious: a regular $0$-dimensional complex can only have one point.
\item $R_{2,k}=R_{k,2} = k+1$ since the largest regular $1$-complex with $2+k$ vertices is a path of $k+1$ edges, and its complement is regular too (Example~\ref{exm:k=1}).
\item $R_{3,k}\le 2k+1$ follows from the fact that a triangulated $2$-ball with
  $k+3$ vertices has at most $2k+1$ triangles (with equality if and only if its
  boundary is a 3-cycle). On the other hand, it is easy to construct a balanced
  $3$-polytope with $k+3$ vertices for every $k\not\in \{1,2,4\}$: for odd $k$,
  consider the bipyramid over a $(k+1)$-gon; for even $k$, glue an octahedron into a facet of the latter. This shows that $R_{3,k}=2k+1$ for all such $k$ (but 
$R_{3,4}=8$ instead of $9$, since no balanced $3$-polytope on $7$ vertices exists; the best we can do is a double pyramid over a path of length four).
\item $R_{4,4}\ge 16$ follows from the complex $\mathbf S_{8,3}$, of size $F_{2,2}=16$.
\end{itemize}

It is easy to prove analogues of Equations~\eqref{E:sec} and~\eqref{E:first} for $R$.
Assume for simplicity that both $d$ and $k$ are even and that $d\le k$. Then, by Proposition~\ref{prop:corona}
\[
R_{d,k} \ge  |\mathbf S_{d+k,d-1}|=F_{d/2,k/2} \ge D_{d/2,k/2} \ge \binom{\frac{d+k}2}{\frac d2},
\]
where the last inequality comes from taking the summand $\ell=0$ in Eq.~\eqref{eq:Delannoy}.
For $k=d$ this recovers  Eq.~\eqref{eq:Xidd} (modulo a sublinear factor) since $\binom{d}{d/2}\in \Theta(2^d /\sqrt{d})$. 
More generally, using Stirling's approximation we get:
\[
  R_{d,k} \ge \binom{\frac{d+k}2}{\frac d2}
  \underset{d,k\rightarrow\infty}{\sim}
  \left(\frac{d+k}k\right)^{k/2} \left(\frac{d+k}d\right)^{d/2}
  \sqrt{\frac{d+k}{\pi kd}}.
\]
For constant $d$ and big $k$ we can approximate $\left(\frac{d+k}k\right)^{k/2} \simeq e^{d/2}$ so that 

\[
R_{d,k} \ge  \frac{e^{d/2}}{\sqrt{\pi d}} \left(\frac{k}d+1\right)^{d/2}.
\]
This, except for the constant factor and for the exponent $d/2$ instead of $d$, is close to Equation~\eqref{E:sec}. Doing the same for constant $k$ and big $d$ gives the analogue of Equation~\eqref{E:first}. 

Similarly, one has
\[
R_{d+d',k+k'}\ge R_{d,k} R_{d',k'}
\]
(the analogue of part (1) in Proposition~\ref{prop:superadditive}) since the join of regular complexes is regular and the complement of a join is the join of the complements.

Regarding upper bounds, since the number of facets of a regular complex cannot exceed that of a cyclic polytope we have that
  \[
R_{2d,2k} \le |\mathbf C_{2d+2k,2d-1}| = \binom{2k+d}{d}+\binom{2k+d-1}{d-1},
\]
so that, by using Stirling's formula, we get
\[
\lim_{n\to\infty} {R_{2dn,2kn}}^{1/(2dn+2kn)} \le \left(\frac{d+2k}{2k}\right)^{\frac k{d+k}} \left(\frac{d+2k}d\right)^{\frac d{2(d+k)}}.
\]

Figure~\ref{fig:lowerbounds} shows this upper bound (green line) together with the lower bounds from Figure~\ref{fig:graph_bound} (blue line and red dots). There are many red dots above the green line, meaning that the upper bound for $R$ is smaller than the lower bound for $\Xi$. 
\begin{figure}\centering
\includegraphics[scale=0.4]{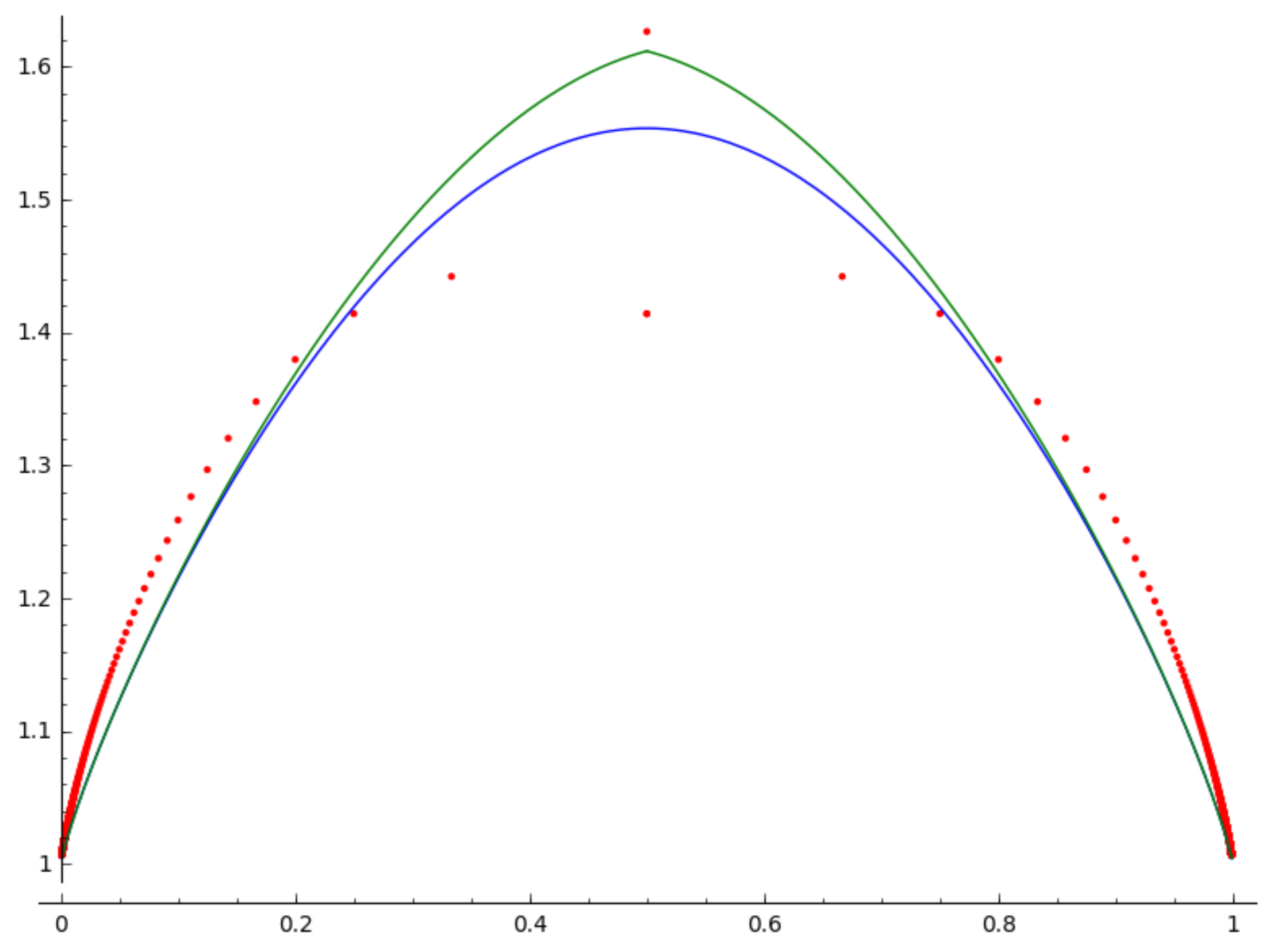}
\caption{The lower bound for $\xi_{\alpha,1-\alpha}$ coming from
Theorem~\ref{thm:lower_bound} (blue curve) versus the ones coming from
$\Xi_{2,2}\geq 7$ and $\Xi_{d,1} = \Xi_{1,d} = d+1$ (red dots). 
The green curve, coming from the upper bound theorem for polytopes,
is the limit of the lower bounds that could possibly be
produced with our method.
}
\label{fig:lowerbounds}
\end{figure}
For example, for the case $d=k$ we have that 
\[
(R_{d,d})^{1/d} \le 3\sqrt{3}/2 \approx 2.598 < 2.6458 \approx 7^{1/2} \le (\Xi_{d,d})^{1/d} .
\]

\providecommand{\bysame}{\leavevmode\hbox to3em{\hrulefill}\thinspace}
\providecommand{\MR}{\relax\ifhmode\unskip\space\fi MR }
\providecommand{\MRhref}[2]{%
  \href{http://www.ams.org/mathscinet-getitem?mr=#1}{#2}
}
\providecommand{\href}[2]{#2}

\bigskip

\footnotesize
\noindent {\bf Authors' addresses:}

\noindent Fr\'ed\'eric Bihan, Laboratoire de Math\'ematiques, Universit\'e Savoie Mont Blanc, Campus Scientifique,
73376 Le Bourget-du-Lac Cedex, France, {\tt frederic.bihan@univ-smb.fr}

\smallskip

\noindent Pierre-Jean Spaenlehauer, CARAMBA project, INRIA Nancy -- Grand Est; Universit\'e de Lorraine; CNRS, UMR 7503; 
LORIA, Nancy, France, {\tt pierre-jean.spaenlehauer@inria.fr}

\smallskip

\noindent Francisco Santos, Depto. de Matem\'aticas, Estad\'istica y Computaci\'on, Universidad de Cantabria; 39012 Santander, Spain, {\tt francisco.santos@unican.es}

\end{document}